\documentclass[10pt,a4paper]{article}
\usepackage[a4paper,left=2.5cm,right=2.5cm,top=2.5cm,bottom=2.5cm]{geometry}
\usepackage[british]{babel}
\usepackage[utf8]{inputenc}
\usepackage[T1]{fontenc}
\usepackage{csquotes} 
\usepackage[backend=biber, style=alphabetic, maxbibnames=4,minbibnames=4, sorting=nyt]{biblatex}
\usepackage{amsfonts}
\usepackage{amssymb}
\usepackage{amsmath}
\usepackage{tikz-cd}
\usepackage{mathtools,bm,bbm}
\usepackage[mathscr]{eucal}
\usepackage{xfrac,dsfont}
\usepackage{stackengine,units}
\usepackage{relsize}
\DeclareMathAlphabet{\mathbbmsl}{U}{bbm}{m}{sl}
\usepackage[colorlinks=true, allcolors=blue]{hyperref}
\usepackage{enumitem}
\usepackage{calc}

\usepackage{amsthm}
\newtheorem{theorem}{Theorem}[section]
\newtheorem{lemma}[theorem]{Lemma}
\newtheorem{proposition}[theorem]{Proposition}
\newtheorem{definition}[theorem]{Definition}
\newtheorem{example}[theorem]{Example}
\newtheorem{corollary}[theorem]{Corollary}
\newtheorem{remark}[theorem]{Remark}
\newtheorem{notation}[theorem]{Notation}
\newtheorem*{theorem*}{Theorem}

\newcommand{\N}{\mathbb{N}}
\newcommand{\Z}{\mathbb{Z}}
\newcommand{\R}{\mathbb{R}}
\newcommand{\C}{\mathbb{C}}

\newcommand{\frg}{\mathfrak{g}}

\newcommand{\frt}{\mathfrak{t}}
\newcommand{\frk}{\mathfrak{k}}
\newcommand{\fra}{\mathfrak{a}}
\newcommand{\frp}{\mathfrak{p}}

\newcommand{\tenr}{\otimes_\mathbb{R}}
\newcommand{\tenc}{\otimes_\mathbb{C}}
\newcommand{\dolb}{\bar{\partial}}

\DeclareMathOperator{\Tr}{Tr}
\DeclareMathOperator{\image}{Im}
\DeclareMathOperator{\kernel}{Ker}
\DeclareMathOperator{\GL}{GL}
\DeclareMathOperator{\Sp}{Sp}
\DeclareMathOperator{\Sym}{Sym}

\DeclareMathOperator{\SU}{SU}
\DeclareMathOperator{\End}{End}
\DeclareMathOperator{\SO}{SO}
\DeclareMathOperator{\Id}{Id}
\DeclareMathOperator{\Ad}{Ad}
\DeclareMathOperator{\Aut}{Aut}
\DeclareMathOperator{\proj}{proj}
\DeclareMathOperator{\Hom}{Hom}

\newcommand{\glxsp}[1]{\GL({#1},\mathbb{H})\times\Sp (1)}
\newcommand{\glsp}[1]{\GL({#1},\mathbb{H})\cdot\Sp (1)}
\newcommand{\spsp}[1]{\spn{#1}\cdot\Sp (1)}
\newcommand{\hnnsp}[1]{\mathbb{H}^{{#1}\times {#1}}\cdot\Sp (1)}
\newcommand{\glh}[1]{\GL({#1},\mathbb{H})}
\newcommand{\glc}[1]{\GL({#1},\mathbb{C})}

\newcommand{\spn}[1]{\Sp({#1})}
\newcommand{\symh}[1]{\Sym^{#1} H}
\newcommand{\exte}[1]{\Lambda^{#1}E^{\ast}}

\newcommand{\spanvector}[1]{\langle #1 \rangle}

\newcommand{\textover}[3][l]{%
 \makebox[\widthof{#3}][#1]{#2}
}
\begin{document}
\title{Fixed Point Theorems in Quaternionic Geometry}
\author{Niklas Henningsen\thanks{Mathematisch-Naturwissenschaftliche Fakult\"at, Mathematisches Institut, Heinrich-Heine-Universit\"at D\"usseldorf, Universit\"atsstraße
1, 40225 D\"usseldorf, Germany\\ Email: \texttt{niklas.henningsen@hhu.de}}}
\date{15 September 2026}
\maketitle

\begin{abstract}        
  \noindent We consider an application of the Atiyah-Bott fixed point theorem in quaternionic geometry.\\
  We review the relation of the general Salamon complex of a quaternionic manifold and the Dolbeault complex with coefficients in a certain holomorphic vector bundle of its twistor space. Furthermore, we investigate which maps are suitable for the application of the fixed point theorem. As a result, we obtain a fixed point formula for both complexes and apply it to Wolf spaces.
\end{abstract}
\noindent Keywords: Quaternionic geometry; Salamon complex; Quaternionic maps; Twistor space

\noindent 2020 Mathematics Subject Classification: Primary: 58C30, Secondary: 53C26, 53C28

\section{Introduction}
\label{quatsect1}
Just as every complex manifold comes equipped with the Dolbeault complex, every quaternionic manifold also carries an elliptic complex of differential operators induced by its quaternionic structure. This complex is called the Salamon complex and its differential operator $\delta$ plays a role analogous to the Dolbeault operator $\dolb$: In the same way that the Newlander-Nirenberg theorem characterises integrable almost complex structures by the condition $\dolb^2=0$, a result of Salamon \cite[Theorem~2.2]{salamonQM} shows that an almost quaternionic manifold is quaternionic if and only if $\delta^2=0$.\\
In \cite{salamonPhD}, Salamon defined a more general elliptic complex, called the general Salamon complex, associated with the choice of an $r\in 2\cdot\N_0$ and a complex vector bundle equipped with a linear connection whose curvature only has form components in a certain subbundle.\\
In \cite{kkquattorsion}, K{\"o}hler and Weingart defined the quaternionic torsion of the general Salamon complex. They also computed the torsion for certain quaternionic manifolds, the so-called Wolf spaces, which are symmetric spaces that were characterised by Wolf in \cite{wolf}. These spaces were also studied in \cite{AlekseevskyQuatSpaces} and \cite{AlekseevskyCortesQuat}. In \cite{AlekseevskyQuatSpaces}, it was shown that every compact homogeneous quaternionic K\"ahler manifold with non-zero Ricci curvature is automatically a Wolf space. In \cite{AlekseevskyCortesQuat}, Alekseevsky and Cort{\'e}s proved that every quaternionic K\"ahler manifold $M$ admitting a transitive action of an unimodular group $G$ is either flat, i.e.\ the Riemannian product of a torus and a Euclidean space, or it is a quaternionic K\"ahler symmetric space $G/H$. In a recently submitted article \cite{brendlesemmelmann2025quaternionic}, Brendle and Semmelmann showed that any compact quaternionic K\"ahler manifold with positive scalar curvature and non-negative sectional curvature is isometric to a symmetric space.\\

As described in \cite{salamonPhD} and \cite{salamonQM}, the general Salamon complex is closely related to the Dolbeault complex, as it can be embedded into the Dolbeault complex with coefficients in a certain holomorphic vector bundle on a complex manifold associated to the quaternionic manifold, the so-called Twistor space. With respect to this embedding, the differential operator $\delta_{F,r}$ of the general Salamon complex is induced by the Dolbeault operator and the embedding gives an isomorphism of their cohomologies \cite{salamonPhD}.\\
In \cite{SpacilQuatComplexes}, Sp{\'a}{\v{c}}il obtained a formula for the (non-equivariant) index of the Salamon complex in terms of characteristic clases. In \cite{BastonQuatComplexes}, further quaternionic complexes are defined and the (non-equivariant) index for Hyperk\"ahler manifolds is also calculated in terms of characteristic classes. Atiyah and Bott established a fixed point theorem for elliptic complexes in \cite{atiyahbott1} and subsequently applied it to the Dolbeault complex with coefficients in a holomorphic vector bundle \cite{atiyahbott2}. This motivates us to apply the Atiyah-Bott fixed point theorem, which gives a formula for the equivariant index, to the general Salamon complex of a compact quaternionic manifold.\\
For the application of the fixed point theorem in quaternionic geometry, it is essential to determine which maps are admissible. In this regard, \cite{twistorialmaps} gave a definition of quaternionic maps and studied them and their relation to holomorphic maps between twistor spaces. However, in the case $r=0$, we can even use a slightly more general definition of quaternionic maps. In \cite{ChenLiQuaternionicMaps}, a slightly different definition of quaternionic maps for Hyperk\"ahler manifolds, which are a special type of quaternionic manifolds, was given. However, the definition given in \cite{ChenLiQuaternionicMaps} can only be applied to Hyperk\"ahler manifolds and differs significantly from the definition we use, as it is based on a globally uniform transformation of the hypercomplex structure at each point of the manifold.\\

In Section~\ref{quatsect3} of this part, we will deal with the construction of the general Salamon complex. In this process, we give proofs for the results of \cite{salamonPhD} for the general case, i.e.\ including the choice of $r\in 2\cdot\N_0$ and a complex coefficient vector bundle as mentioned above, as these results were only proven for the special case of $r=0$ and a trivial complex line bundle as the coefficient bundle in \cite{salamonPhD}. We also correct a missing factor in the definition of a homomorphism used in the definition of $\delta_{F,r}$ in \cite{salamonPhD}.\\
The differential operator $\delta_{F,r}$ (see Proposition~\ref{quatlabel25}) is an example of an invariant differential operator in quaternionic geometry because it does not depend on the choice of the torsion-free connection. Therefore, we must also re-examine Salamon's choice of the power of the one-dimensional representation that makes the differential operator invariant, as this will play an important role later on and to avoid possible confusion when comparing with the choice made in \cite{kkquattorsion}.\\

Motivated by Atiyah and Bott's application of their fixed point theorem in complex geometry \cite{atiyahbott2}, we apply the Atiyah-Bott fixed point theorem to the general Salamon complex.
\begin{itemize}
    \item The torsion-free $\glsp{n}$-structure of a quaternionic manifold $M$ will be denoted by $P\subset \GL (M)$ and its twistor space is a certain subbundle $Z\subset \End(TM)$ with $\pi_Z\colon Z\rightarrow M$.
    \item A complex vector bundle $F\rightarrow M$ together with a connection $\nabla^F$ is called a $B_2$-vector bundle if the curvature of $\nabla^F$ only has form components in $B_2$, which is defined by $B_{2,x}:=\bigcap_{J_z\in Z_x}\Lambda^{1,1}T^{\ast}_xM_{J_z}\subset \Lambda^2 T^\ast_x M\otimes_\R \C$ for every $x\in M$ and complex structures $J_z\in Z_x:=\pi_Z^{-1} (x)$, where $\Lambda^{1,1}T^{\ast}_xM_{J_z}$ are the $(1,1)$-forms with respect to $J_z$ on $T_xM$.
    \item For every $x\in M$, the derivative $T_x\gamma$ of a quaternionic map $\gamma \colon M\rightarrow M$ is a quaternionic linear map of the vector spaces $T_xM$ and $T_{\gamma (x)}M$ (See Definition~\ref{quatlabel26} and Remark~\ref{quatlabel28}).
    \item For $r\in 2\cdot\N_0$, let $V^{k,r}$ be the associated vector bundle to the representation $\Lambda^k E^\ast \otimes_\C \Sym^{k+r} H\otimes_\C\phi^r$ of $\glsp{n}$, which is defined in Section~\ref{quatsect2.2}. The differential operator of the general Salamon complex is a map $\delta_{F,r}\colon \Gamma(M,V^{k,r}\tenc F)\rightarrow \Gamma(M,V^{k+1,r}\tenc F)$ defined in Proposition~\ref{quatlabel25}.
    \item For vector bundle $V\rightarrow M$ and a $C^\infty$-map $\gamma \colon M\rightarrow M$, let $\gamma^V\colon\gamma^\ast V\rightarrow V$ be a vector bundle homomorphism. We define $\tau^V:=\gamma^V\circ \gamma^\ast$, where $\gamma^\ast$ is the pullback of sections.
\end{itemize}
We obtain the following theorem that we will prove in Section~\ref{quatsect4}.
\begin{theorem}[Fixed point theorem for the Salamon complex]
\label{quatlabel33}
    Let $r\in 2\cdot\N_0$, $M$ be a compact quaternionic manifold and $\pi_F\colon F\rightarrow M$ a $B_2$-vector bundle. Furthermore, let $\gamma\colon M\rightarrow M$ be a quaternionic map such that $\det\nolimits_\R (1-T_x\gamma)\neq 0$ for all fixed points $x\in M^\gamma$ and, for $r\neq 0$, $\det\nolimits_\R (T_x\gamma)\neq 0$ for all $x\in M$. With the maps $\gamma^{V^{\bullet,r}},\gamma^F$ and $\tau^{V^{\bullet,r}},\tau^F$ as in Section~\ref{quatsect4.1}, such that $\tau^{V^{\bullet,r}\tenc F} := \tau^{V^{\bullet,r}}\tenc\tau^F$ commutes with $\delta_{F,r}$, we obtain
    \begin{align*}
        \Tr_\mathrm{s} \left(\tau^{V^{\bullet,r}\tenc F}_{\vert H^\bullet_{\delta_{F,r}}(M)}\right) = \sum\limits_{x\in M^\gamma} \omega (x),
    \end{align*}
    where $T_x\gamma = p_x\circ (v\mapsto T_xve^{-it_x}) \circ p_x^{-1}$ for $x\in M^\gamma$, $p_x\in P_x$ as in Corollary~\hyperref[quatlabel17]{\ref*{quatlabel17}.(\ref*{quatlabel18})} and
    \begin{align*}
        \omega (x) := \Tr (\gamma^F_{\vert x})\cdot \left(\frac{\det_\C (T_x)^{-\frac{r}{2(n+1)}}e^{irt_x}}{(1-e^{-2it_x})\det_\C (1-e^{-it_x}T_x)} + \frac{\det_\C (T_x)^{-\frac{r}{2(n+1)}}e^{-irt_x}}{(1-e^{2it_x})\det_\C (1-e^{it_x}T_x)}\right)
    \end{align*}
    for $e^{it_x}\not\in \lbrace -1,1\rbrace$ and
    \begin{align*}
        \omega(x) := \Tr (\gamma^F_{\vert x})\cdot\frac{\det_\C (T_x)^{-\frac{r}{2(n+1)}}(r+1-\Tr ((1-T_x)^{-1}T_x))}{\det_\C(1-T_x)}
    \end{align*}
    for $e^{it_x}\in \lbrace -1,1\rbrace$.
\end{theorem}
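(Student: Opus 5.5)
We apply the Atiyah–Bott fixed point theorem \cite{atiyahbott1} directly to the general Salamon complex $(\Gamma(M,V^{\bullet,r}\tenc F),\delta_{F,r})$, which is elliptic by Proposition~\ref{quatlabel25}. The geometric endomorphism is $\tau^{V^{\bullet,r}\tenc F}$, which commutes with $\delta_{F,r}$ by hypothesis. The condition $\det_\R(1-T_x\gamma)\neq 0$ says exactly that the fixed points are simple (transversal). Hence
\begin{align*}
\Tr_\mathrm{s}\left(\tau^{V^{\bullet,r}\tenc F}_{\vert H^\bullet_{\delta_{F,r}}(M)}\right)=\sum_{x\in M^\gamma}\frac{\sum_k(-1)^k\Tr\left(\gamma^{V^{k,r}}_{\vert x}\right)\Tr\left(\gamma^F_{\vert x}\right)}{\left|\det\nolimits_\R(1-T_x\gamma)\right|}.
\end{align*}
The whole proof is then a pointwise linear-algebra computation at each fixed point $x$.

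Fix $x\in M^\gamma$ and $p_x$ as in Corollary~\ref{quatlabel17}.(\ref{quatlabel18}). Then $T_x\gamma$ corresponds to the element $(T_x,e^{it_x})$ of $\glsp{n}$, acting by $v\mapsto T_xve^{-it_x}$. The homomorphism $\gamma^{V^{k,r}}_{\vert x}$ is the action of this element on $\Lambda^kE^\ast\tenc\Sym^{k+r}H\tenc\phi^r$, following Section~\ref{quatsect4.1}. I expect the transpose/inverse convention forced by the pullback to be what produces the inverse powers.

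First step, the denominator. Since $T_x\C^{2n}$ identifies with $E\tenc H$, the map $1-T_x\gamma$ becomes $1-T_x\otimes e^{\mp it_x}$ on $E\otimes H$. This gives
\begin{align*}
\det\nolimits_\R(1-T_x\gamma)=\det\nolimits_\C(1-e^{-it_x}T_x)\det\nolimits_\C(1-e^{it_x}T_x)>0,
\end{align*}
so the absolute value can be dropped. The quaternionic structure makes these two factors complex conjugate, which is why the result is positive.

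Second step, the numerator. The trace on $\phi^r$ gives the factor $\det_\C(T_x)^{-r/(2(n+1))}$, using the normalisation of $\phi$ from Section~\ref{quatsect2.2}. The eigenvalues on $H$ are $e^{\pm it_x}$, so the trace on $\Sym^{m}H$ is $\sum_{j=0}^{m}e^{i(m-2j)t_x}=\frac{e^{i(m+1)t_x}-e^{-i(m+1)t_x}}{e^{it_x}-e^{-it_x}}$. The alternating sum is
\begin{align*}
\sum_k(-1)^k\Tr_{\Lambda^kE^\ast}(T_x)\,\Tr_{\Sym^{k+r}H},
\end{align*}
and it splits into two geometric pieces: $\frac{e^{i(r+1)t_x}}{e^{it_x}-e^{-it_x}}\det_\C(1-e^{it_x}T_x^\ast)$ minus the analogous term with $t_x\mapsto -t_x$. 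Here $T_x^\ast$ is the action on $E^\ast$, and we use $\sum_k(-1)^k\Tr\Lambda^kA=\det(1-A)$.

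Dividing by the product of the two determinants cancels one factor in each piece. After simplifying $\frac{e^{i(r+1)t}}{e^{it}-e^{-it}}=\frac{e^{irt}}{1-e^{-2it}}$, this gives exactly the two terms of $\omega(x)$ when $e^{it_x}\neq\pm1$. One must track which of $e^{\pm it_x}$ pairs with which determinant; I would fix this by checking the sign convention against $v\mapsto T_xve^{-it_x}$.

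Third step, the degenerate case $e^{it_x}=\pm1$. Here $\Sym^{k+r}H$ has trace $(\pm1)^{k+r}(k+r+1)$, so the numerator is
\begin{align*}
\sum_k(-1)^k\Tr\left(\Lambda^k(\pm T_x)\right)(k+r+1).
\end{align*}
This is $(r+1)\det(1-T_x)$ plus a derivative term, namely $\frac{d}{ds}\det(1-sT_x)|_{s=1}=-\det(1-T_x)\Tr((1-T_x)^{-1}T_x)$. Note that $\pm T_x$ represents the same element as $T_x$ modulo the $\pm1$ identification, and $r$ is even. Dividing by $\det(1-T_x)^2$ then yields the stated formula. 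Alternatively, the degenerate case follows as the limit $t_x\to0$ of the generic formula, which is a good consistency check.

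The main obstacle is not the Atiyah–Bott step but the bookkeeping. One has to correctly identify $\gamma^{V^{k,r}}$, including its dual/inverse convention, and the action on the one-dimensional factor $\phi^r$ when $\gamma$ is only a $\glsp{n}$-map, where the power $-r/(2(n+1))$ and a choice of root must be consistent. This is where the hypothesis $\det_\R(T_x\gamma)\neq0$ for $r\neq0$ enters, since it is needed to define $\gamma^{V^{\bullet,r}}$ globally.
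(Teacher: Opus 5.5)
Your proposal is correct and is essentially the paper's proof. Both apply Atiyah--Bott to the Salamon complex, write $\vert\det_\R(1-T_x\gamma)\vert=\det_\C(1-e^{it_x}T_x)\det_\C(1-e^{-it_x}T_x)$, and evaluate the alternating trace via the character of $\Sym^{k+r}H$ and $\sum_k(-1)^k\Tr\Lambda^k(\cdot)=\det_\C(1-\cdot)$, using the $\frac{\partial}{\partial s}$ trick in the degenerate case after normalising $e^{it_x}=1$ via $[T,\lambda]=[-T,-\lambda]$. Your remark that the two determinant factors are complex conjugate (since $T_x$ is similar to $\overline{T_x}$) justifies dropping the absolute value, which the paper leaves implicit, and your worry about which $e^{\pm it_x}$ pairs with which determinant is moot, because the whole expression is symmetric under $t_x\mapsto -t_x$.
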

Due to the relation of the general Salamon complex and the Dolbeault complex of its Twistor space, we obtain a fixed point theorem for the twistor space (Theorem~\ref{quatlabel23}), which is the classical fixed point theorem by Atiyah and Bott.\\
In Section~\ref{quatsect4.4}, we consider the case of Wolf spaces as an application of our fixed point theorem in quaternionic geometry. This application gives a sum over formal characters.\\

This work is part of the author’s PhD thesis that is currently under review.\\

\textbf{Acknowledgements:} I would like to thank Prof.\ Dr.\ Kai K\"ohler for the opportunity to conduct this project as part of my doctoral studies and for his support during my time at Heinrich Heine University Düsseldorf.
\section{Quaternionic Vector Spaces}
\label{quatsect2}
In the first sections, we collect some general information and results from \cite{twistorialmaps} and \cite{salamonPhD} that we will need later on and which will also serve as motivation for later definitions.\\

The quaternions $\mathbb{H}:=\spanvector{1,i,j,k}_\R$ form an $\R$-algebra, where $i^2=-1=j^2$, $k:=ij=-ji$. Now $\SO (3)$ acts canonically on $Q_\mathbb{H}:=\spanvector{i,j,k}_\R$ and trivially on $\R\subset\mathbb{H}$. Furthermore, $\mathbb{H}$ carries a canonical scalar product, whose unit sphere we denote by $\spn{1}$, for which $\nu\in Q_\mathbb{H}$ satisfies $\nu^2=-1$ if and only if $\nu\in\spn{1}\cap Q_\mathbb{H}$. We define $Z_\mathbb{H}:=Q_\mathbb{H}\cap \spn{1}$.
\begin{definition}
    Let $V$ be a $4n$-dimensional real vector space. A subvector space $Q_V\subset\End_\R(V)$ is called a \textbf{quaternionic structure} if there exists an algebra isomorphism $Q_V\oplus \R\cdot\Id_V\cong \mathbb{H}$. A \textbf{hypercomplex structure} is a triple $(I,J,K)\in Q_V^3$ such that $I^2=-1=J^2, K=IJ=-JI$. We denote by $Z_V$ all $I\in Q_V$ with $I^2=-1$.
\end{definition}
\noindent On a quaternionic structure $Q_V$, $\SO (3)$ acts naturally, and for two hypercomplex structures $(I,J,K)$ and $(I',J',K')$, there exists exactly one element of $A\in \SO (3)$ such that $(I',J',K') = (A(I),A(J),A(K))$. This provides a transitive action of $\SO (3)$ on the set of hypercomplex structures.
\begin{example}
    For $\lambda\in\mathbb{H}$, we write $R_\lambda(v):=v\lambda$ for the componentwise multiplication with $\lambda$ from the right for $v\in\mathbb{H}^n$. The canonical quaternionic structure on $\mathbb{H}^n$ is $Q_{\mathbb{H}^n}=\spanvector{R_{-i},R_{-j},R_{-k}}$. There is also a canonical scalar product on $\mathbb{H}^n$.
\end{example}
\subsection{Quaternionic Linear Maps}
\label{quatsect2.1}
For a vector space with a quaternionic structure, there is no canonical hypercomplex structure, so the definition of quaternionic maps is somewhat different from the definition of complex-linear maps. We want a definition that is independent of the choice of a specific hypercomplex structure.
\begin{definition}
    \label{quatlabel27}
    Let $V_1,V_2$ be two vector spaces with quaternionic structure. We call a $\R$-linear map $\gamma\colon V_1\rightarrow V_2$ \textbf{quaternionic} if hypercomplex structures $(I,J,K)\in Z_{V_1}^3$, $(I',J',K')\in Z_{V_2}^3$ exist with $\gamma\circ I= I'\circ \gamma,\gamma\circ J= J'\circ \gamma, \gamma\circ K= K'\circ \gamma$.
\end{definition}
\begin{proposition}[{\cite[Proposition~1.6]{twistorialmaps}}]
    For every $4n$-dimensional real vector space with quaternionic structure, there is an isomorphism $V\xrightarrow{\cong} \mathbb{H}^n$, which is a quaternionic map.
\end{proposition}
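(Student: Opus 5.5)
We construct the isomorphism by hand from a hypercomplex structure on $V$. By definition of a quaternionic structure, there is an algebra isomorphism $Q_V\oplus\R\cdot\Id_V\cong\mathbb{H}$. Under it, the elements $-i,-j,-k$ correspond to endomorphisms $I,J,K\in Z_V$ with $I^2=J^2=-1$ and $K=IJ=-JI$; this is a hypercomplex structure on $V$. We then let $\mathbb{H}$ act on $V$ from the right by
\begin{align*}
v\cdot(a+bi+cj+dk):=av-bI(v)-cJ(v)-dK(v).
\end{align*}
For this to be a right action we need $(v\cdot\lambda)\cdot\mu=v\cdot(\lambda\mu)$. Since we use $-I,-J,-K$, the check reduces to the identities $(-J)(-I)=JI=-K$, which matches $v\cdot(ij)=v\cdot k$, and similarly for the other products. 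This is exactly why the canonical structure on $\mathbb{H}^n$ uses $R_{-i},R_{-j},R_{-k}$: right multiplication reverses the order of composition. So $V$ becomes a right $\mathbb{H}$-module.

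The main step is to show that this module is free of rank $n$. The argument is the usual one for modules over a division ring. We choose vectors $v_1,\dots,v_m$ that are $\mathbb{H}$-linearly independent and maximal with this property. Then their right $\mathbb{H}$-span is all of $V$: if some $w$ lay outside it, any relation $\sum v_\ell\lambda_\ell+w\mu=0$ must have $\mu\neq0$ by independence of the $v_\ell$. Multiplying by $\mu^{-1}$ would then put $w$ in the span, a contradiction. Counting real dimensions gives $4m=4n$, so $m=n$.

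We then define $\Phi\colon V\to\mathbb{H}^n$ by $\Phi\bigl(\sum_\ell v_\ell\lambda_\ell\bigr)=(\lambda_1,\dots,\lambda_n)$. This map is an $\R$-linear bijection. By construction it satisfies $\Phi(v\cdot\lambda)=\Phi(v)\lambda$, so $\Phi\circ I=R_{-i}\circ\Phi$, $\Phi\circ J=R_{-j}\circ\Phi$ and $\Phi\circ K=R_{-k}\circ\Phi$. The triple $(R_{-i},R_{-j},R_{-k})$ is a hypercomplex structure in $Z^3_{\mathbb{H}^n}$, since $R_{-i}R_{-j}=R_{(-j)(-i)}=R_{ji}=R_{-k}$. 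Hence $\Phi$ is quaternionic in the sense of Definition~\ref{quatlabel27}.

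The only delicate point is keeping the signs and orders consistent between the left composition of endomorphisms and the right multiplication by quaternions. Apart from that, the proof is routine linear algebra over the division ring $\mathbb{H}$.
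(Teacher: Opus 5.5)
The paper gives no proof of its own here; it only cites \cite[Proposition~1.6]{twistorialmaps}. Your argument follows the standard route: fix a hypercomplex structure, make $V$ a right $\mathbb{H}$-module, use that modules over a division ring have bases, and transport a basis to $\mathbb{H}^n$. The module-theoretic step and the final checks $\Phi\circ I=R_{-i}\circ\Phi$ and $(R_{-i},R_{-j},R_{-k})\in Z_{\mathbb{H}^n}^3$ are correct.

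There is, however, a sign error exactly where you say the argument is delicate. Write $\psi\colon\mathbb{H}\to Q_V\oplus\R\cdot\Id_V$ for the algebra isomorphism. If $I=\psi(-i)$, $J=\psi(-j)$, $K=\psi(-k)$, then
\begin{align*}
IJ=\psi\bigl((-i)(-j)\bigr)=\psi(ij)=\psi(k)=-K .
\end{align*}
So $K\neq IJ$, and $JI=K$ rather than $-K$. Your check $(v\cdot i)\cdot j=JIv=-Kv$ therefore fails. With these $I,J,K$ your formula reads $v\cdot\lambda=\psi(\lambda)v$. This satisfies $v\cdot(\lambda\mu)=(v\cdot\mu)\cdot\lambda$, so it is a left action, not a right one.

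The repair is immediate: take $I=\psi(i)$, $J=\psi(j)$, $K=\psi(k)$, which do satisfy $K=IJ=-JI$. Your formula then becomes $v\cdot\lambda=\psi(\overline{\lambda})v$. This is a right action because quaternionic conjugation reverses products, and it gives $Iv=v\cdot(-i)$. Everything after that goes through verbatim.

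One further point: you need $\psi(i),\psi(j),\psi(k)\in Q_V$, hence in $Z_V$. This relies on the intended reading of the definition, namely that the isomorphism carries $Q_V$ onto the imaginary quaternions. If $Q_V$ were only required to be some complement of $\R\cdot\Id_V$, then $Z_V$ need not contain any hypercomplex structure.
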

\noindent Every quaternionic matrix $T\in\mathbb{H}^{n\times n}$ gives a quaternionic map $\gamma(v):=Tv$ for $v\in\mathbb{H}^n$. Let $\glh{n}$ denote the invertible quaternionic matrices and let $\spn{n}\subset\glh{n}$ denote the matrices that leave the scalar product on $\mathbb{H}^n$ invariant.\\
Now we want to determine all quaternionic linear maps $\gamma\colon \mathbb{H}^n\rightarrow \mathbb{H}^n$. The following lemma is a summary of \cite[Section~1]{twistorialmaps} and uses the fact that $\spn{1}\rightarrow \SO (3)$, $\lambda\mapsto (\nu\mapsto \lambda\nu\lambda^{-1})$ is a $2$-fold covering according to \cite[Chapter~5]{curtis}.
\begin{lemma}
\label{quatlabel1}
    For a $\R$-linear map $\gamma \colon \mathbb{H}^n\rightarrow \mathbb{H}^n$, the following statements are equivalent:
    \begin{enumerate}
        \item $\gamma$ is a quaternionic map.
        \item There exist $T\in\mathbb{H}^{n\times n}$ and $\lambda\in \spn{n}$ such that $\gamma (v)=Tv\lambda^{-1}$ for all $v\in \mathbb{H}^n$.
        \item There is a map $\varphi \colon Z_\mathbb{H}\rightarrow Z_\mathbb{H}$ with $\varphi (\nu )\circ \gamma = \gamma \circ \nu$ for all $\nu\in Z_\mathbb{H}$.
    \end{enumerate}
    In particular, $\varphi(\nu)=\lambda\nu\lambda^{-1}$ and $\varphi\in\SO (3)$ if $\gamma\neq 0$.
\end{lemma}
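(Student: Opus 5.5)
We prove the cycle of implications (1)$\Rightarrow$(3)$\Rightarrow$(2)$\Rightarrow$(1). Throughout, the canonical quaternionic structure on $\mathbb{H}^n$ is spanned by the right multiplications $R_{-i},R_{-j},R_{-k}$. Every element of $Z_\mathbb{H}$ acts on $\mathbb{H}^n$ as $R_{\bar\nu}=R_{\nu^{-1}}$, and we identify $\nu\in Z_\mathbb{H}$ with this endomorphism, so $\varphi(\nu)\circ\gamma$ means $v\mapsto\gamma(v)\varphi(\nu)^{-1}$. The equivalences are understood up to this identification. In (2), $\lambda$ should lie in $\Sp(1)$, since it multiplies from the right.

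\emph{(1)$\Rightarrow$(3).} Suppose there are hypercomplex structures $(I,J,K)$ and $(I',J',K')$ on $\mathbb{H}^n$ with $\gamma I=I'\gamma$, $\gamma J=J'\gamma$, $\gamma K=K'\gamma$. Both triples are hypercomplex structures of the same quaternionic structure $Q_{\mathbb{H}^n}$. By the transitivity of the $\SO(3)$-action on hypercomplex structures, there is a unique $A\in\SO(3)$ with $(I',J',K')=(A(I),A(J),A(K))$. Extend $A$ linearly to $Q_{\mathbb{H}^n}$ and set $\varphi:=A_{|Z_\mathbb{H}}$. For $\nu=aI+bJ+cK\in Z_\mathbb{H}$, linearity of the intertwining relations gives
\begin{align*}
\gamma\circ\nu=(aI'+bJ'+cK')\circ\gamma=A(\nu)\circ\gamma .
\end{align*}
Since $A\in\SO(3)$ preserves $Z_\mathbb{H}$, this establishes (3).

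\emph{(3)$\Rightarrow$(2).} Assume $\gamma\neq0$; otherwise take $T=0$. First we show that $\varphi$ extends to an element of $\SO(3)$. Composing the relations for two elements of $Z_\mathbb{H}$ gives $\gamma\circ(\nu\mu)=\varphi(\nu)\varphi(\mu)\circ\gamma$. Now use $\gamma\circ(-1)=(-1)\circ\gamma$ together with $\gamma\neq0$. Whenever $\nu\perp\mu$, the products $\nu\mu$ and $\mu\nu$ are negatives of each other, so $\varphi(\nu)\varphi(\mu)\gamma=-\varphi(\mu)\varphi(\nu)\gamma$.

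The step I expect to be the main obstacle is deducing from this that $\varphi(\nu)$ and $\varphi(\mu)$ anticommute, hence are orthogonal. A left-multiplication identity $(xy+yx)\gamma=0$ with $x,y\in\mathbb{H}$ acting on the right of a nonzero image forces $xy+yx=0$: right multiplication by a nonzero quaternion is injective, and the image of $\gamma$ contains a nonzero vector. With orthogonality in hand, $\varphi$ maps orthonormal triples $(i,j,k)$ to orthonormal triples. The relation $\varphi(k)\gamma=\varphi(i)\varphi(j)\gamma$ then fixes the orientation, so $\varphi$ is the restriction of some $A\in\SO(3)$. Linearity of $\varphi$ follows from the same injectivity argument: $\gamma$ commutes with sums on the right side, so $\varphi(a\nu+b\mu)\gamma=(a\varphi(\nu)+b\varphi(\mu))\gamma$ whenever both arguments lie in $Z_\mathbb{H}$.

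By the cited $2$-fold covering $\Sp(1)\to\SO(3)$, choose $\lambda\in\Sp(1)$ with $A(\nu)=\lambda\nu\lambda^{-1}$. Define $\tilde\gamma(v):=\gamma(v)\lambda$. Then for $\nu\in Z_\mathbb{H}$,
\begin{align*}
\tilde\gamma(v\nu)=\gamma(v)\lambda\nu\lambda^{-1}\lambda=\tilde\gamma(v)\nu ,
\end{align*}
using the relation from (3) in right-multiplication form. Hence $\tilde\gamma$ is right $\mathbb{H}$-linear, so $\tilde\gamma(v)=Tv$ for some $T\in\mathbb{H}^{n\times n}$. This yields $\gamma(v)=Tv\lambda^{-1}$, and the formula $\varphi(\nu)=\lambda\nu\lambda^{-1}$ falls out of the construction.

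\emph{(2)$\Rightarrow$(1).} Take $(I,J,K)=(R_{-i},R_{-j},R_{-k})$ and $(I',J',K')$ the images of this triple under conjugation by $\lambda$. A direct computation gives $\gamma(vi)=Tvi\lambda^{-1}=\gamma(v)\lambda i\lambda^{-1}$, and similarly for $j$ and $k$. Since conjugation by $\lambda$ is an algebra automorphism, the image triple is again hypercomplex. This proves (1), and with it the final statement of the lemma.
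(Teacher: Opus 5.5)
Your proof is correct and follows the route the paper indicates: the paper gives no proof of its own beyond citing Section~1 of the work on twistorial maps and pointing to the two-fold covering $\Sp(1)\rightarrow\SO(3)$. Your cycle supplies those details: injectivity of $q\mapsto wq$ for $w\neq 0$ forces $\varphi$ to be an element of $\SO(3)$, you lift it to $\lambda$, and untwisting by $\lambda$ gives a right $\mathbb{H}$-linear map. You are also right that in (2) $\lambda$ must lie in $\Sp(1)$ rather than $\spn{n}$, which is a typo in the statement.
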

Let 
\begin{align*}
    \mathbb{H}^{n\times n}\cdot\spn{1} :=& \lbrace v\mapsto Tv\lambda^{-1}\mid T\in\mathbb{H}^{n\times n},\lambda\in\spn{1}\rbrace\subset\mathrm {End}_\R(\mathbb{H}^n),\\
    \glsp{n} :=& \lbrace v\mapsto Tv\lambda^{-1}\mid T\in\glh{n},\lambda\in\spn{1}\rbrace. 
\end{align*}
Note that $\glsp{n}=(\glxsp{n})/\lbrace \pm (\Id_{\mathbb{H}^n},1)\rbrace$ holds. We denote an element of $\mathbb{H}^{n\times n}\cdot\spn{1}$ by $[T,\lambda]$.
\subsection{Representations}
\label{quatsect2.2}
Before defining the representations which will be used frequently in the following, we first consider the embedding of quaternionic matrices into complex matrices, as done in \cite{quatmatrices}.\\
With respect to the isomorphism $\mathbb{H}^n\rightarrow \C^{2n},u+jv\mapsto\begin{pmatrix}
    u\\v
\end{pmatrix}$, the operation of a quaternionic matrix $A+jB$ with $A,B\in \C^{n\times n}$ on $\mathbb{H}^n$ corresponds exactly to the operation of the complex matrix $\begin{pmatrix}
    A&-\overline{B}\\ B&\overline{A}
\end{pmatrix}$ on $\C^{2n}$. Let $\mathbb{H}^{n\times n}\hookrightarrow\C^{2n\times 2n}$, $A+jB\mapsto \begin{pmatrix}
    A&-\overline{B}\\ B&\overline{A}
\end{pmatrix}$. We can consider $\mathbb{H}^{n\times n}$ and its subspaces $\glh{n},\spn{n}$ as subspaces of $\C^{2n\times 2n}$.\\
With the help of $J_r\colon \C^{2n}\rightarrow\C^{2n}$, $\begin{pmatrix}
        u\\v
    \end{pmatrix}\mapsto\begin{pmatrix}
        -\overline{v}\\ \overline{u}
    \end{pmatrix}$, we have
    \begin{align*}
        \mathbb{H}^{n\times n} &= \lbrace T\in \C^{2n\times 2n}\mid T\circ J_r=J_r\circ T\rbrace,\\
        \glh{n} &= \mathbb{H}^{n\times n}\cap\glc{2n},\\
        \spn{n} &= \mathbb{H}^{n\times n}\cap \operatorname{U} (2n).
    \end{align*}
\noindent The map $J_r$ corresponds to the scalar multiplication of a vector with $j\in \mathbb{H}$ from the right.
\begin{remark}
    One can show that $T$ and $\overline{T}$ are similar for any $T\in\mathbb{H}^{n\times n}$ with the help of the matrix $\begin{pmatrix} 0&-\Id_{\C^n} \\ \Id_{\C^n}&0\end{pmatrix}$. According to \cite[Proposition~4.2]{quatmatrices}, we even have $\det\nolimits_\C (T)\geq 0$. In particular, the eigenvalues of any $\lambda \in \spn{1}$ are complex conjugates of each other, thus the eigenvalues are $e^{it}$ and $e^{-it}$ for some $t \in \R$.
\end{remark}
This allows use to define the following representations:
\begin{itemize}
    \item $H:=\C^2$ denotes the standard representation of $\spn{1}$,
    \item $E:=\C^{2n}$ denotes the standard representation of $\glh{n}$,
    \item $\phi^r:=\C$ denotes representation $\glh{n}\rightarrow \glc{1}=\C\setminus \lbrace 0\rbrace$, $T\mapsto (\det\nolimits_\C (T))^{\frac{r}{2(n+1)}}$ for $r\in \R$.
\end{itemize}
Their dual representations are denoted by $H^\ast,E^\ast,\phi^{-r}=(\phi^r)^\ast$. 
\begin{notation}
    Let $k\in \N_0$ and $V$ be a vector space. We denote the symmetric tensors by $\Sym^k V$. For $v_1,\ldots,v_k\in V$, we define $v_1\cdot\ldots\cdot v_k:=\sum\limits_{\nu\in S_k} v_{\nu (1)}\otimes \ldots \otimes v_{\nu (k)}$ with the symmetric group $S_k$ of permutations of $\lbrace 1,\ldots,k\rbrace$. Thus, for $v_1,v_2\in V$, we define $v_1^{l}\cdot v_2^{k-l}:=\underbrace{v_1\cdot \ldots \cdot v_1}_{l\text{-times}}\cdot\underbrace{v_2\cdot \ldots \cdot v_2}_{(k-l)\text{-times}}$. Note that $v_1^k=k!\underbrace{v_1\otimes \ldots\otimes v_1}_{k\text{-times}}$ holds. We denote the $k$-th exterior power of $V^\ast$ by $\Lambda^k V^\ast$, and we have $\alpha_1\wedge\ldots\wedge \alpha_k = \sum\limits_{\nu \in S_k} \operatorname{sgn} (\nu) \alpha_{\nu (1)}\otimes\ldots\otimes \alpha_{\nu (k)}$ for $\alpha_1,\ldots,\alpha_k\in V^\ast$.\\
    We consider these spaces as a subspace of $V^{\otimes k}$ and $(V^\ast)^{\otimes k}$, respectively.\\
    For $T\in\End(V)$, we write $T^{\otimes k}$ for the map $v_1\otimes \ldots\otimes v_k\mapsto (Tv_1)\otimes \ldots\otimes (Tv_k)$ on $V^{\otimes k}$ and its subspaces that are invariant under this map.
\end{notation}
All representations mentioned above are irreducible, and the same holds for the tensor product $\Lambda^{s}E^\ast \otimes_\mathbb{C} \Sym^{t}H \otimes_\mathbb{C} \phi^{r}$ for $s,t\in\N_0,r\in\R$, which is a representation of $\glxsp{n}$. For $s=k$, $t=k+r$ with $k\in\N_0,r\in 2\cdot\N_0$, it is even a representation of $\glsp{n}$. For example, for $k=1$ and $r=0$ we obtain the representation $E\tenc H$ of $\glsp{n}$.\\
Now $H$ has a $\spn{1}$-invariant symplectic form $\omega_{H^\ast}(h,h'):=-h^Tjh'$ with $j:=\begin{pmatrix}
    0&-1\\1&0
\end{pmatrix}$ for $h,h'\in H$, as described in \cite{kkquattorsion}.
\begin{definition}
    We call non-zero vectors $h_1,h_2\in H$ a \textbf{standard basis} of $H$ if $h_2=J_r(h_1)$ and $\omega_{H^\ast}(h_1,h_2)=1$.
\end{definition}
\begin{remark}
\label{quatlabel29}
    The map $h\mapsto\omega_{H^\ast} (h,\cdot)$ is a $\spn{1}$-invariant isomorphism of the representations $H$ and $H^\ast$. Because of $H\cong H^\ast$ we also have $\Lambda^2H^\ast\cong \Lambda^2H$ and for $\omega_{H^\ast}$ there is a corresponding $\omega_H\in\Lambda^2H$. For a standard basis $h_1,h_2$, we have $\omega_H=h_1\wedge h_2$ and $\Lambda^2 H$ is a trivial representation using the isomorphism $\Lambda^2H\rightarrow \C$, $\omega_H\mapsto 1$.
\end{remark}
\begin{remark}
\label{quatlabel30}
    With the action as in Lemma~\ref{quatlabel1}, $\mathbb{H}^n$ is a representation of $\glxsp{n}$ and $\glsp{n}$, just like $\R^{4n}$, since we have $\mathbb{H}^n\cong\C^{2n}\cong \R^{4n}$.\\    
    It can be shown that the representations $E\tenc H$ and $\R^{4n}\tenr\C$ are isomorphic by considering the eigenspaces of $J_r\otimes J_r$ on $E\tenc H$. This helps to decompose $\Lambda^{\bullet} (\R^{4n}\tenr \C)^\ast\cong \Lambda^{\bullet}(E^\ast \tenc H)$ into irreducible representations because we can consider $\glh{n}$ and $\spn{1}$ separately, see also \cite[p.~12]{salamonPhD} and \cite{kkquattorsion}. An important summand in this decomposition is $\exte{k}\tenc \symh{k}$. This helps later on to decompose complexified differential forms.\\
    This isomorphism of representations not only commutes with the action of $\glsp{n}$, but also with maps of $\hnnsp{n}$. With respect to $E\tenc H\cong \R^{4n}\tenr \C$, the eigenspaces of a complex structure $R_\nu\in Z_{\mathbb{H}^n}\subset \End_\R(\mathbb{H}^n)\cong\End_\R (\R^{4n})$ are given by $E\tenc \C h_{\pm i}$ for a non-zero eigenvector $h_{\pm i}$ corresponding to the eigenvalue $\pm i$ of $\nu\in Z_\mathbb{H}\subset \spn{1}$.
\end{remark}
\begin{remark}
    The complex conjugation on $R^{4n}\otimes_\R\C$, i.e.\ $v\otimes_\R z\mapsto v\otimes_\R\overline{z}$, corresponds to the map $J_r\otimes_\C J_r$ under the isomorphism above.
\end{remark}
\section{Quaternionic Manifolds} \label{quatsect3}
\subsection{$G$-Structures and Connections}\label{quatsect3.1}
Before we move on to the definition of quaternionic manifolds, let us collect some relevant definitions and results from \cite{baumeichfeld} in the next sections that are important for further understanding.\\
Let $M$ be a smooth manifold of dimension $m$. Then there exists a principal bundle $\GL(M) \to M$ with structure group $\GL(m,\mathbb{R})$, known as the \textbf{frame bundle} of $M$. The fibre of $\GL(M)$ over a point $x \in M$ consists of all linear isomorphisms $\mathbb{R}^m \to T_xM$. The existence of subbundles $P \subset \GL(M)$, which are reductions of the frame bundle to a Lie subgroup $G \subset \GL(m,\mathbb{R})$, often induces additional geometric structures on the manifold $M$. Such a subbundle $P \subset \GL(M)$ with structure group $G$ is called a \textbf{$G$-structure}.\\
A classical example is an $\operatorname{O}(m)$-structure, which corresponds to the choice of a Riemannian metric on $TM$, and which always exists.\\

Let $G$ be a Lie group and let $\pi_P\colon P \to M$ be a principal $G$-bundle over $M$. The map
\begin{align*}
    P\times \mathfrak{g}&\rightarrow T^VP:=\kernel (T\pi_P)\\
    (p,X)&\mapsto \frac{\partial}{\partial t} (p\cdot \exp(tX))_{\vert t=0}
\end{align*}
is a vector bundle isomorphism. For $X \in \mathfrak{g}$, we denote by $\widetilde{X}\colon P\rightarrow T^VP\subset TP$ the vector field on $P$ induced in this isomorphism.
\begin{definition}
\begin{enumerate}
    \item A \textbf{connection} on a principal bundle $\pi_P\colon P \rightarrow M$ is a choice of a horizontal tangent bundle $T^H P \subset TP$, i.e.\ a subbundle of $TP$ such that $T_eR_g(T_p^H P) = T_{p \cdot g}^H P$ and $TP = T^V P \oplus T^H P$.
    \item A \textbf{connection form} on the principal bundle $\pi_P\colon P \rightarrow M$ is a 1-form $\omega \in \mathfrak{A}^1(P, \mathfrak{g})$, where $\mathfrak{g}$ is the Lie algebra of $G$, such that:
    \begin{itemize}
        \item $R_g^* \omega = \Ad(g^{-1}) \circ \omega$ for all $g \in G$,
        \item $\omega(\widetilde{X}) = X$ for all $X \in \mathfrak{g}$.
    \end{itemize}
    We denote by $\mathcal{C}(P)$ the space of all connection forms on $P$.
\end{enumerate}
\end{definition}
\begin{lemma}
    There is a one-to-one correspondence of connection forms and connections:
    \begin{enumerate}
        \item If $T^H P$ is a connection on $P$, then $\omega_p(\widetilde{X}_p \oplus Y_p) := X$ for $X \in \mathfrak{g}$ and $Y_p \in T_p^H P$ defines a connection form.
        \item If $\omega \in \mathcal{C}(P)$ is a connection form, then $T^H P := \kernel(\omega)$ defines a connection on $P$.\\
        Note: $\omega \in \mathfrak{A}^1(P, \mathfrak{g})$, i.e.\ $\omega_p \in \End_\mathbb{R}(T_pP, \mathfrak{g})$.
    \end{enumerate}
\end{lemma}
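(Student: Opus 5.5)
The plan is to check both directions directly against the definitions, and then show that the two constructions are mutually inverse.

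For (1), start from a connection $T^HP$. Since $TP=T^VP\oplus T^HP$ and $P\times\mathfrak{g}\to T^VP$, $(p,X)\mapsto\widetilde{X}_p$, is an isomorphism, every $v\in T_pP$ decomposes uniquely as $\widetilde{X}_p+Y_p$. So $\omega_p$ is a well-defined linear map $T_pP\to\mathfrak{g}$; it is the vertical projection followed by the inverse of this isomorphism. Smoothness follows because both the projection onto $T^VP$ along the smooth subbundle $T^HP$ and the inverse bundle isomorphism are smooth. The property $\omega(\widetilde{X})=X$ is immediate, since the horizontal part of $\widetilde{X}$ is zero.

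Equivariance is the step needing a computation. One identity is
\[
T R_g(\widetilde{X}_p)=\frac{\partial}{\partial t}\bigl(p\exp(tX)g\bigr)_{\vert t=0}=\frac{\partial}{\partial t}\bigl(pg\exp(t\Ad(g^{-1})X)\bigr)_{\vert t=0}=\widetilde{(\Ad(g^{-1})X)}_{pg}.
\]
The other is $TR_g(T^H_pP)=T^H_{pg}P$. Hence for $v=\widetilde{X}_p+Y_p$ we get $(R_g^\ast\omega)_p(v)=\omega_{pg}\bigl(\widetilde{(\Ad(g^{-1})X)}_{pg}+TR_gY_p\bigr)=\Ad(g^{-1})X$, which is the required identity.

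For (2), let $\omega\in\mathcal{C}(P)$ and set $T^HP=\kernel\omega$. The restriction of $\omega_p$ to $T^V_pP$ is the inverse of $X\mapsto\widetilde{X}_p$, so it is an isomorphism onto $\mathfrak{g}$. Therefore $\omega_p$ is surjective, $\kernel\omega_p\cap T^V_pP=0$, and a dimension count gives $T_pP=T^V_pP\oplus\kernel\omega_p$. Since $\omega$ is a smooth form of constant rank, $\kernel\omega$ is a smooth subbundle. Invariance follows from $\omega_{pg}(TR_gv)=\Ad(g^{-1})\omega_p(v)$: this shows $TR_g(\kernel\omega_p)\subset\kernel\omega_{pg}$, and equality holds by dimension.

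Finally, the two maps are inverse to each other. The kernel of the form built in (1) is exactly $T^HP$. Conversely, the form built from $\kernel\omega$ agrees with $\omega$ on vertical vectors, since both send $\widetilde{X}$ to $X$, and on $\kernel\omega$, where both vanish. So the two forms coincide.

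The only nontrivial points are the $\Ad$-identity for $TR_g(\widetilde{X})$ and the smoothness of the subbundles. Both are routine.
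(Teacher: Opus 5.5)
Your argument is correct and complete. The vertical-projection definition of $\omega$, the identity $TR_g(\widetilde{X}_p)=\widetilde{(\Ad(g^{-1})X)}_{pg}$ for equivariance, the dimension count giving $T_pP=T^V_pP\oplus\kernel\omega_p$, and the check that the two constructions are mutually inverse are the standard direct verification. The paper gives no proof of its own here; it quotes the lemma from \cite{baumeichfeld}, where this is the usual argument.
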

\noindent For this reason, we will simply typically refer to a connection on $P$ from now on.\\
For any manifold $F$ on which $G$ acts from the left, there exists a fibre bundle $P \times_G F := (P \times F)/G \rightarrow M \cong P/G$, called the \textbf{associated fibre bundle} to $F$. We have $[p,f]=[p\cdot g,g^{-1}\cdot f]$ for all $g\in G$. We use the shorthand notation $\underline{F}:=P \times_G F$ when a $G$-structure is fixed. \\
For a (complex) representation $V$ of $G$, the bundle $\underline{V}$ is a (complex) vector bundle. Every $G$-equivariant map $\kappa\colon F_1\rightarrow F_2$ induces a fibre bundle morphism $\underline{\kappa}\colon \underline{F_1} \rightarrow \underline{F_2}$ and the associated bundles of two isomorphic $G$-representations are also isomorphic.\\
In the case of the frame bundle $\GL(M)$, there is a one-to-one correspondence between linear connections on $TM$ and connections on $\GL(M)$. This is compatible with the identification $TM \cong P \times_G \mathbb{R}^m$ for a $G$-structure $P$, where the action of $G\subset\GL (m,\R)$ is the standard representation on $\R^m$. Moreover, any connection on a $G$-structure $P$ induces a connection on $\GL(M)$.
\begin{definition}
    A connection on a $G$-structure is called \textbf{torsion-free} if the corresponding linear connection on $TM$ is torsion-free. We call a $G$-structure $P$ torsion-free if there exists a connection on $P$ which is torsion-free.
\end{definition}
\begin{definition}
    For $n\geq 2$, a $4n$-dimensional manifold $M$ with a torsion-free $\glsp{n}$-structure $P$ is called a \textbf{quaternionic manifold}. A $4$-dimensional manifold is called \textbf{quaternionic manifold} if it is self-dual. We obtain a subspace
    \begin{align*}
        Q:=\lbrace p\circ R_\nu\circ p^{-1}\mid p\in P, \nu\in Q_\mathbb{H}\rbrace\subset \End (TM)
    \end{align*}
    which makes the tangent space $T_xM$ at each point a vector space with quaternionic structure $Q_x$.
\end{definition}
\begin{remark}
    Because $\glsp{1}\cong \R^+\cdot\SO (4)$ defines a conformal structure, the Levi-Civita connection of any compatible metric induces a torsion-free $\glsp{1}$-structure for $n=1$.
\end{remark}
\begin{remark}
    For Lie subgroups of $\glsp{n}$, there are other special quaternionic manifolds.
    \begin{enumerate}
        \item If a $\spsp{n}$-structure exists, then $M$ is called a quaternionic K\"ahler manifold.
        \item If a $\spn{n}$-structure exists, then $M$ is called a Hyperk\"ahler manifold.
    \end{enumerate}
\end{remark}
Since the $\glsp{n}$-representations $\mathbb{R}^{4n} \tenr \mathbb{C}$ and $E \tenc H$ are isomorphic, we have $TM \tenr \mathbb{C} \cong \underline{E \tenc H}$ and $T^*M \cong \underline{E^\ast \tenc H}$.\\

We review at a few examples of quaternionic manifolds from \cite{salamonPhD} and \cite{salamonQM}.
\begin{example}
    \begin{enumerate}
        \item With its canonical torsion-free $\glsp{n}$-structure $\mathbb{H}^n\times \glsp{n}$, $\mathbb{H}^n$ is a quaternionic manifold.
        \item The projective quaternionic space is defined by
        \begin{align*}
            \mathbf{P}^n\mathbb{H} = (\mathbb{H}^{n+1}\setminus\lbrace 0\rbrace)/\mathbb{H}^\ast,
        \end{align*}
        where $\mathbb{H}^\ast=\mathbb{H}\setminus\lbrace 0\rbrace$ acts by multiplication from the right. With the usual trivialisations of the projective spaces, one can define a well-defined torsion-free $\glsp{n}$-structure.\\
        The projective quaternionic space is also a good example of why it makes more sense to define a quaternionic manifold by the existence of a $\glsp{n}$-structure rather than by the existence of a $\GL(n,\mathbb{H})$-structure, since the latter does not exist for $\mathbf{P}^n\mathbb{H}$.
        \item In \cite{wolf}, Wolf showed that, for every compact simple Lie group $G$ without centre, one can construct a symmetric quaternionic manifold $G/K_1$. We shall focus on these spaces in more detail in Section~\ref{quatsect4.4}.
    \end{enumerate} 
\end{example}
\subsection{Invariant Differential Operators} \label{quatsect3.2}
Before we can describe differential operators in quaternionic geometry that are invariant under the choice of torsion-free connection, as done in \cite{salamonPhD} and \cite{salamonDGQM}, we will continue to cite a few more basic definitions and results from \cite{baumeichfeld}.\\
Let $P$ be a principal bundle with structure group $G$ again and $F$ a manifold on which $G$ acts from the left. A connection on $P$ induces a horizontal tangent bundle $T^H \underline{F}$ for the fibre bundle $\pi_{\underline{F}}\colon \underline{F} \rightarrow M$, and in the case of a (complex) $G$-representation $V$, this also induces a linear connection $\nabla\colon \Gamma(M, \underline{V}) \rightarrow \Gamma(M, T^*M \tenr \underline{V})$ or $\nabla\colon \Gamma(M, \underline{V}) \rightarrow \Gamma(M, (T^*M \tenr \mathbb{C}) \tenc \underline{V})$, respectively. The linear connection on $\underline{V}$ induced by a connection form $\omega \in \mathcal{C}(P)$ is denoted by $\nabla^\omega$ or $\nabla^{\omega,V}$ to distinguish it.\\
We frequently use the identification $\underline{F_1 \times F_2} \cong \underline{F_1} \times \underline{F_2}$ for two spaces $F_1, F_2$ on which $G$ acts. Similarly, we identify $\underline{V_1 \otimes V_2} \cong \underline{V_1} \otimes \underline{V_2}$ for $G$-representations $V_1, V_2$. The construction of linear connections is compatible with this identification to the extent that $\nabla^{\omega,V_1\otimes V_2}$ on $\underline{V_1\otimes V_2}$ translates to $\nabla^{\omega,V_1}\otimes 1+1\otimes \nabla^{\omega,V_2}$ on $\underline{V_1}\otimes \underline{V_2}$.

\begin{definition}
    Let $\rho\colon G \rightarrow \Aut(V)$ be a representation of $G$. A form $\alpha \in \mathfrak{A}^k(P, V)$ is called
    \begin{enumerate}
        \item \textbf{horizontal} if $\alpha_p(X_1, \ldots, X_k) = 0$ whenever one of the vectors $X_j \in T_pP$ is vertical, i.e.\ $X_j \in T_p^V P$,
        \item \textbf{compatible with the representation $\rho$} if $R_g^* \alpha = \rho(g^{-1}) \circ \alpha$ for all $g \in G$. If there is no risk of confusion, we usually write $g^{-1}$ instead of $\rho(g^{-1})$.
    \end{enumerate}
    The set of horizontal forms that are compatible with the representation is denoted by $\mathfrak{A}_{\operatorname{hor}}^k(P, V)^G$.
\end{definition}

\begin{lemma}
    There exists a canonical isomorphism $\mathfrak{A}^k(M, \underline{V}) \rightarrow \mathfrak{A}_{\operatorname{hor}}^k(P, V)^G$.
\end{lemma}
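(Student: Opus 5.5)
We construct the map explicitly and check that it is bijective. Write $\pi_P\colon P\rightarrow M$ for the projection. Each $p\in P_x$ gives a linear isomorphism $[p,\cdot]\colon V\rightarrow \underline{V}_x$, $v\mapsto [p,v]$, with inverse $\hat p\colon\underline{V}_x\rightarrow V$. From $[p,v]=[p\cdot g,g^{-1}v]$ we get $\widehat{p\cdot g}=g^{-1}\circ\hat p$. For $\beta\in\mathfrak{A}^k(M,\underline{V})$ we define $\bar\beta\in\mathfrak{A}^k(P,V)$ by
\begin{align*}
    \bar\beta_p(X_1,\ldots,X_k):=\hat p\bigl(\beta_{\pi_P(p)}(T_p\pi_P X_1,\ldots,T_p\pi_P X_k)\bigr).
\end{align*}
This form is smooth because $P\times V\rightarrow \underline{V}$ is a local diffeomorphism onto fibres in local trivialisations. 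It is multilinear and alternating because $\beta$ is, and composing with $T\pi_P$ and the linear map $\hat p$ preserves both properties.

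Next we check that $\bar\beta$ lies in $\mathfrak{A}_{\operatorname{hor}}^k(P,V)^G$. Horizontality is immediate: $T^VP=\kernel(T\pi_P)$, so $\bar\beta_p$ vanishes as soon as one argument is vertical. For compatibility, we use $\pi_P\circ R_g=\pi_P$, which gives $T\pi_P\circ TR_g=T\pi_P$. Hence
\begin{align*}
    (R_g^\ast\bar\beta)_p(X_1,\ldots,X_k)=\widehat{p\cdot g}\bigl(\beta(T\pi_P X_1,\ldots)\bigr)=g^{-1}\bar\beta_p(X_1,\ldots,X_k).
\end{align*}

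For the inverse map, let $\alpha\in\mathfrak{A}_{\operatorname{hor}}^k(P,V)^G$, $x\in M$ and $Y_i\in T_xM$. Choose $p\in P_x$ and lifts $X_i\in T_pP$ with $T_p\pi_P X_i=Y_i$; lifts exist since $\pi_P$ is a submersion. Set
\begin{align*}
    \beta_x(Y_1,\ldots,Y_k):=[p,\alpha_p(X_1,\ldots,X_k)].
\end{align*}
We show that this is independent of all choices.
\begin{enumerate}
    \item \emph{Independence of the lifts at fixed $p$.} Two lifts of the same $Y_i$ differ by a vertical vector. By multilinearity and horizontality of $\alpha$, the value $\alpha_p(X_1,\ldots,X_k)$ does not change.
    \item \emph{Independence of $p$.} Any other point of the fibre is $p\cdot g$, and $T_pR_g X_i$ are lifts of the same $Y_i$ at $p\cdot g$. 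Then
    \begin{align*}
        [p\cdot g,\alpha_{p\cdot g}(TR_gX_1,\ldots)]=[p\cdot g,g^{-1}\alpha_p(X_1,\ldots)]=[p,\alpha_p(X_1,\ldots)].
    \end{align*}
\end{enumerate}

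Smoothness of $\beta$ follows by working over $U\subset M$ with a local section $s\colon U\rightarrow P$ and taking $p=s(x)$, $X_i=T_xs(Y_i)$. Then $\beta=[s,s^\ast\alpha]$ is visibly smooth. By construction $\beta$ is multilinear and alternating.

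Finally, the two constructions are mutually inverse. Starting from $\beta$, we have $[p,\hat p(\beta(Y_1,\ldots))]=\beta(Y_1,\ldots)$, so $\beta\mapsto\bar\beta\mapsto\beta$ is the identity. Starting from $\alpha$, we have $\hat p[p,\alpha_p(X_1,\ldots)]=\alpha_p(X_1,\ldots)$, where we may use the $X_i$ themselves as lifts of $T\pi_P X_i$; so $\alpha\mapsto\beta\mapsto\bar\beta$ returns $\alpha$. The map $\beta\mapsto\bar\beta$ is linear, and it is canonical since no choices enter its definition.

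The only genuinely delicate step is well-definedness of the inverse, in particular that the value does not depend on the chosen point $p$ of the fibre. This is exactly where both horizontality and $G$-compatibility of $\alpha$ are needed, and it is handled in the two items above.
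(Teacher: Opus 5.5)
Your proof is correct and is the standard argument: define $\bar\beta_p=\hat p\circ\beta_{\pi_P(p)}\circ(T_p\pi_P)^{\times k}$, and build the inverse from lifts, with horizontality and $G$-compatibility giving well-definedness. The paper gives no proof of its own and simply cites the result from \cite{baumeichfeld}, where it is proved by this same construction. The one loosely justified step is the smoothness of $\bar\beta$. It is cleaner to write $p=s(x)\cdot g(p)$ for a local section $s$ and smooth $g$, so that $\hat p=g(p)^{-1}\circ\widehat{s(x)}$ depends visibly smoothly on $p$.
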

\begin{definition}
The \textbf{total differential} $D_\omega\colon \mathfrak{A}^k(P, V) \rightarrow \mathfrak{A}^{k+1}(P, V)$ associated to $\omega \in \mathcal{C}(P)$ is defined by
\begin{align*}
    (D_\omega \alpha)_p(X_1, \ldots, X_{k+1}) := (d\alpha)_p(\proj_H(X_1), \ldots, \proj_H(X_{k+1}))
\end{align*}
for $p \in P$, $X_1, \ldots, X_{k+1} \in T_pP$, where $\proj_H\colon TP \rightarrow T^V P \oplus T^H P$ is the projection induced by $\omega$. By restriction, we obtain a map $D_\omega\colon \mathfrak{A}_{\operatorname{hor}}^k(P, V)^G \rightarrow \mathfrak{A}_{\operatorname{hor}}^{k+1}(P, V)^G$, and the isomorphism $\mathfrak{A}^k(M, \underline{V}) \cong \mathfrak{A}_{\operatorname{hor}}^k(P, V)^G$ yields a map $d_\omega\colon \mathfrak{A}^k(M, \underline{V}) \rightarrow \mathfrak{A}^{k+1}(M, \underline{V})$, for which in particular the Leibniz rule holds:
\begin{align*}
    d_\omega(\alpha \wedge \beta) = (d\alpha) \wedge \beta + (-1)^k \alpha \wedge (d_\omega \beta)
\end{align*}
for $\alpha \in \mathfrak{A}^k(M)$ and $\beta \in \mathfrak{A}^\bullet(M, \underline{V})$. We now define $\nabla^{\omega, V} := {d_\omega}_{\vert \Gamma(M, \underline{V})}$ to be the linear connection induced by $\omega \in \mathcal{C}(P)$, as mentioned earlier.
\end{definition}
\begin{remark}
    For local sections $p\colon U \rightarrow P$ and $s = [p, v] \in \Gamma(U, \underline{V})$ with $v \in C^\infty(U, V)$, we have
    \begin{align}
    \label{quatlabel2}
        (\nabla^\omega_X s)_{\vert x} = [p_x, (dv)_{\vert x}(X_x) + T_e\rho(\omega(T_xp(X_x)))v_x],
    \end{align}
    for $x \in U$, $X \in T_xM$. Since every section $p\colon U \rightarrow P$ yields a local trivialisation of $\underline{V}$, every section can locally be written in the form above.
\end{remark}
We now return to the discussion of $G$-structures for a Lie subgroup $G \subset \GL(m, \mathbb{R})$. The following lemma will be relevant later on.
\begin{lemma}
\label{quatlabel31}
    Let $\pi_P\colon P \rightarrow M$ be a $G$-structure, $\omega \in \mathcal{C}(P)$ a connection form and $\rho\colon G \rightarrow \Aut(V)$ a $G$-representation.
    \begin{enumerate}
        \item For every $x \in M$, there exists a local section $p\colon U \rightarrow P$ such that $(p^\ast \omega)_{\vert x} = 0$. For $g \in G$, considered as a constant map $g\colon U \rightarrow G$, we also have $((p \cdot g)^\ast \omega)_{\vert x} = 0$.
        \item For every $x \in M$, and given a basis $v_1, \ldots, v_m \in V$, there exists a section $p\colon U \rightarrow P$ such that, for the local frame $(e_j)_j$ defined by $e_j = [p, v_j]$, we have $(\nabla^\omega e_j)_{\vert x} = 0$ for all $j$.
    \end{enumerate}
\end{lemma}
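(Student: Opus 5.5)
The plan is to build the required local section directly, by correcting an arbitrary local section with a suitable $G$-valued gauge transformation. This proves part 1; part 2 then follows from formula~\eqref{quatlabel2}.

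For part 1, fix $x\in M$ and choose any local section $q\colon U\rightarrow P$ with $x\in U$. Such a section exists because $P$ is locally trivial. Shrinking $U$, pick a chart $\varphi=(y^1,\ldots,y^m)$ centred at $x$, so $\varphi(x)=0$. Write $A:=q^\ast\omega\in\mathfrak{A}^1(U,\mathfrak{g})$ and $A_x=\sum_j A_j\,dy^j_{\vert x}$ with $A_j\in\mathfrak{g}$. Define $g\colon U\rightarrow G$ by $g(y):=\exp\bigl(-\sum_j y^j A_j\bigr)$ and set $p:=q\cdot g$. The standard gauge transformation formula, which follows from the two defining properties of a connection form, gives
\begin{align*}
(q\cdot g)^\ast\omega=\Ad(g^{-1})\circ q^\ast\omega+g^{-1}dg .
\end{align*}
At $x$ we have $g(x)=e$ and $dg_{\vert x}=-\sum_j A_j\,dy^j_{\vert x}$. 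Hence $(p^\ast\omega)_{\vert x}=A_x-A_x=0$.

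The only real work is checking the gauge formula. Differentiate $p(y)=R_{g(y)}(q(y))$ at $y$ to get $T_yp(X)=T R_{g}(T_yq(X))+\widetilde{Z}_{p(y)}$, where $Z=g^{-1}dg(X)$. Apply $\omega$ and use $R_g^\ast\omega=\Ad(g^{-1})\omega$ together with $\omega(\widetilde{Z})=Z$.

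For a constant $g\in G$ the same formula gives $((p\cdot g)^\ast\omega)_{\vert x}=\Ad(g^{-1})(p^\ast\omega)_{\vert x}=0$, since $dg=0$.

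For part 2, take $p$ as in part 1 and $e_j=[p,v_j]$, where $v_j$ is a constant map $U\rightarrow V$. By \eqref{quatlabel2},
\begin{align*}
(\nabla^\omega_Xe_j)_{\vert x}=[p_x,\,0+T_e\rho\bigl(\omega(T_xp(X))\bigr)v_j]=[p_x,\,T_e\rho\bigl((p^\ast\omega)_{\vert x}(X)\bigr)v_j]=0 .
\end{align*}
The $e_j$ form a local frame because $[p_y,\cdot\,]\colon V\rightarrow\underline{V}_y$ is an isomorphism for every $y$.
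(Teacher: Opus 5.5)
Your proposal is correct and is essentially the paper's proof: you gauge an arbitrary local section by $g(y)=\exp\bigl(-\sum_j y^jA_j\bigr)$, where the paper uses the product $e^{-y_1\omega_1}\cdots e^{-y_m\omega_m}$, which has the same value and first derivative at $x$, and you apply the transformation law $(q\cdot g)^\ast\omega=\Ad(g^{-1})\circ q^\ast\omega+g^{-1}dg$, which you derive directly where the paper cites \cite[Satz~3.3]{baumeichfeld}. Part 2 then follows from (\ref{quatlabel2}) exactly as in the paper.
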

\begin{proof}
\begin{enumerate}
    \item For a local section $p'\colon U\rightarrow P$ and local coordinates $y=(y_1,\ldots, y_n)$ with $x=(0,\ldots,0)$ on $U\ni x$, we have $p'^\ast\omega = \sum\limits_{k=1}^m \omega_k\tenr dy_k$ for $\omega_k\colon U'\rightarrow \frg\subset \R^{m\times m}$. Then put $g_y:=e^{-y_1\omega_1}\cdots e^{-y_m\omega_m}$. We get $g_x=\Id_{\R^m}$ and $T_x g+(p'^\ast \omega)_{\vert x}=0$.\\     
    Due to \cite[Satz~3.3]{baumeichfeld}, we now get the desired statement for $p:=p'\circ g$.
    \item This statement follows easily from (\ref{quatlabel2}) and the first statement.
\end{enumerate}

\end{proof}
As in \cite{salamonPhD} and \cite{salamonDGQM}, we are interested in differential operators $\underline{\kappa} \circ \nabla^\omega \colon \Gamma(M, \underline{V}) \rightarrow \Gamma(M, \underline{W})$, where $\omega$ is a torsion-free connection form of a $G$-structure and $\kappa\colon (\mathbb{R}^n \tenr \mathbb{C})^\ast \tenc V \rightarrow W$ is a $G$-equivariant homomorphism between complex representations $V$ and $W$ of $G$. Our goal is for $\underline{\kappa} \circ \nabla^\omega$ to be independent of the choice of torsion-free connection. For this reason, we must consider the difference $\omega_1 - \omega_2$ of two torsion-free connection forms $\omega_1, \omega_2$ on a $G$-structure $P$.\\

For a $G$-structure $P$, the \textbf{solder form} $\theta$ is defined by $\theta_p(X) := p^{-1}(T_p\pi_P(X))$ for $p \in P$, $X \in \mathbb{R}^m$. Then $\theta \in \mathfrak{A}^1_{\operatorname{hor}}(P, \mathbb{R}^m)^G$ and the form $\Theta_\omega := D_\omega(\theta) \in \mathfrak{A}^2_{\operatorname{hor}}(P, \mathfrak{g})^G$ is called the \textbf{torsion form} of $\omega \in \mathcal{C}(P)$. The vanishing of the torsion form of $\omega$ corresponds exactly to classical torsion-freeness of the connection on $TM$ induced by $\omega$. We have $\Theta_\omega = d\theta + \omega \wedge \theta$, and for two torsion-free connection forms $\omega_1, \omega_2$, we get $0 = \Theta_{\omega_1} - \Theta_{\omega_2} = \omega' \wedge \theta$  with $\omega' := \omega_1 - \omega_2$, i.e.\ $0 = \omega'(X_1)\theta(X_2) - \omega'(X_2)\theta(X_1)$.\\
We have $\omega' \in \mathfrak{A}^1_{\operatorname{hor}}(P, \mathfrak{g})^G \cong \Gamma(M, \underline{(\mathbb{R}^m)^\ast \tenr \mathbb{R}^m \tenr (\mathbb{R}^m)^\ast})$ and $\theta \in \mathfrak{A}^1_{\operatorname{hor}}(P, \mathbb{R}^m)^G \cong \Gamma(M, \underline{(\mathbb{R}^m)^\ast \tenr \mathbb{R}^m})$ since $\mathfrak{g} \subset (\mathbb{R}^m)^\ast \tenr \mathbb{R}^m$. Thus, $\omega' \wedge\theta$ corresponds precisely to an element of $\Gamma(M, \underline{\Lambda^2(\mathbb{R}^m)^\ast \tenr \mathbb{R}^m})$, obtained by wedging the two $(\mathbb{R}^m)^\ast$-factors in $\omega'$. With the $G$-equivariant homomorphism
\begin{align*}
    \iota\colon (\mathbb{R}^m)^\ast\otimes \mathbb{R}^m\otimes (\mathbb{R}^m)^\ast &\rightarrow \Lambda^2 (\mathbb{R}^m)^\ast\otimes \mathbb{R}^m\\
    \alpha_1\otimes v\otimes \alpha_2 &\mapsto (\alpha_1\wedge\alpha_2)\otimes v,
\end{align*}
it follows that $\omega' \in \kernel(\underline{\iota}) = \Gamma(M, \underline{\mathfrak{g}^1})$, where 
\begin{align*}
    \underline{\iota}\colon \Gamma(M, \underline{(\mathbb{R}^m)^\ast \tenr \mathfrak{g}}) \rightarrow \Gamma(M, \underline{\Lambda^2(\mathbb{R}^m)^\ast \tenr \mathbb{R}^m}),
\end{align*}
and $\mathfrak{g}^1 := \kernel(\iota_{\vert (\mathbb{R}^m)^\ast \tenr \mathfrak{g}})$ is called the \textbf{first prolongation} of $G$. Thus, knowledge of $\mathfrak{g}^1$ enables more precise computations involving differences of torsion-free connection forms.\\

From now on, we fix $G := \glsp{n}$ with Lie algebra $\mathfrak{g}$ and set $\widetilde{G} := \glxsp{n}$, so in particular $m = 4n$. Unless otherwise stated, maps are extended $\mathbb{C}$-linearly.\\
For a complex representation $\rho\colon G \rightarrow \Aut(V)$, we consider the map
\begin{align*}
    (\mathbb{R}^{4n} \tenr \mathbb{C})^\ast \tenc (\mathfrak{g} \tenr \mathbb{C}) \tenc V &\rightarrow (\mathbb{R}^{4n} \tenr \mathbb{C})^\ast \tenc V\\ 
    \alpha \tenc X \tenc v &\mapsto \alpha \tenc (T_e \rho(X)v),
\end{align*}
and we denote by $\sigma$ the restriction of this map to 
\begin{align*}
    (\mathfrak{g}^1 \tenr \mathbb{R}) \tenc V \subset (\mathbb{R}^{4n} \tenr \mathbb{C})^\ast \tenc (\mathfrak{g} \tenr \mathbb{C}) \tenc V.
\end{align*}
\begin{proposition}[{\cite[Proposition~5.1]{salamonDGQM}}]
    Let $M$ be a quaternionic manifold, $P$ the associated $G$-structure, and $V, W$ complex representations of $G$. For a surjective $G$\nobreakdash-equivariant homomorphism $\kappa\colon (\R^n\tenr\C)^\ast\tenc V \rightarrow W$, the operator $\underline{\kappa} \circ \nabla^\omega$ is independent of the choice of a torsion-free connection form $\omega \in \mathcal{C}(P)$ if and only if $\image(\sigma) \subset \kernel(\kappa)$.
\end{proposition}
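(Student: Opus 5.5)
The plan is to compute the difference of the two operators explicitly, using the local formula (\ref{quatlabel2}) for induced connections, and to identify it pointwise with $\underline{\kappa}\circ\underline{\sigma}$ applied to the difference of the connection forms. Here $(\R^n\tenr\C)^\ast$ in the statement is read as $(\R^{4n}\tenr\C)^\ast$, since $m=4n$.

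Let $\omega_1,\omega_2\in\mathcal{C}(P)$ be torsion-free and put $\omega':=\omega_1-\omega_2$. As shown before the proposition, $\omega'\in\mathfrak{A}^1_{\operatorname{hor}}(P,\frg)^G$ corresponds to a section of $\underline{\frg^1}$, because $\omega'\wedge\theta=\Theta_{\omega_1}-\Theta_{\omega_2}=0$. Take a local section $p\colon U\to P$ and write $s=[p,v]$. By (\ref{quatlabel2}),
\begin{align*}
(\nabla^{\omega_1}_X s-\nabla^{\omega_2}_X s)_{\vert x}=[p_x,\,T_e\rho(\omega'(T_xp(X_x)))v_x].
\end{align*}
Read as a section of $(T^\ast M\tenr\C)\tenc\underline{V}$, the right-hand side is exactly $\underline{\sigma}(\omega'\tenc s)$. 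This uses the identification $\underline{(\R^{4n}\tenr\C)^\ast\tenc V}\cong (T^\ast M\tenr\C)\tenc\underline{V}$ and the fact that $\omega'$ lies in $\underline{\frg^1}$, so $\omega'\tenc s$ lies in the domain of $\sigma$. I will check that this expression does not depend on the choice of $p$, which follows from the $G$-equivariance of $\sigma$. Consequently
\begin{align*}
\underline{\kappa}\circ\nabla^{\omega_1}-\underline{\kappa}\circ\nabla^{\omega_2}=\underline{\kappa\circ\sigma}(\omega'\tenc\,\cdot\,).
\end{align*}
If $\image(\sigma)\subset\kernel(\kappa)$, this vanishes identically, which proves the \enquote{if} direction.

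For the converse I need every element of $\frg^1$ to arise, at a given point, as the value of a difference of torsion-free connections. Fix one torsion-free $\omega_2$, which exists because $M$ is quaternionic. Let $\xi$ be any section of $\underline{\frg^1}$, viewed as an element of $\mathfrak{A}^1_{\operatorname{hor}}(P,\frg)^G$. Then $\omega_1:=\omega_2+\xi$ is again a connection form: adding a horizontal $\Ad$-equivariant form preserves both defining conditions. Its torsion is $\Theta_{\omega_1}=\Theta_{\omega_2}+\xi\wedge\theta=0$, because $\xi$ lies in the kernel of $\underline{\iota}$.

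Now fix $x\in M$, $a\in\frg^1$ and $v\in V$. Using a local section $p$ and a bump function, I choose a global $\xi$ with $\xi_{\vert x}=[p_x,a]$ and a global $s$ with $s_x=[p_x,v]$. Invariance of the operator then forces $\kappa(\sigma(a\tenc v))=0$. Since $\sigma$ is $\C$-linear and $(\frg^1\tenr\R)\tenc V$ is spanned over $\C$ by the elements $a\tenc v$, it follows that $\image(\sigma)\subset\kernel(\kappa)$.

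The main point requiring care is the identification of the difference term with $\underline{\sigma}(\omega'\tenc s)$. This means keeping track of the isomorphism $\mathfrak{A}^1(M,\underline{\frg})\cong\mathfrak{A}^1_{\operatorname{hor}}(P,\frg)^G$ and of the pairing of the form slot with $T_xp(X)$. That pairing matches the first factor $(\R^{4n}\tenr\C)^\ast$ precisely because $\theta(T_xp(X))=p_x^{-1}X$. Surjectivity of $\kappa$ is not needed in either direction.
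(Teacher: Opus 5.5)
Your proof is correct and is the standard argument. The paper gives no proof of its own and only cites Salamon, but its preceding discussion showing $\omega_1-\omega_2\in\Gamma(M,\underline{\frg^1})$ is exactly your first step. Your construction $\omega_2+\xi$ with $\xi\in\Gamma(M,\underline{\frg^1})$, which is torsion-free since $\xi\wedge\theta=0$, realises any element of $\frg^1$ at a point and so supplies the converse that the paper leaves to the reference. You are also right that $(\R^n\tenr\C)^\ast$ should be read as $(\R^{4n}\tenr\C)^\ast$ and that surjectivity of $\kappa$ is never used.
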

We now aim to explicitly determine $\frg^1 \tenr \C$, as accomplished by Salamon in \cite{salamonPhD}. Therefore, we have to recall several results from \cite{salamonPhD} and \cite{salamonDGQM}. We have
\begin{align*}
    \frg = \mathfrak{gl} (n,\mathbb{H})\oplus_\R \mathfrak{sp} (1) = \R\cdot \Id_{\C^{2n}}\oplus \mathfrak{sl} (n,\mathbb{H})\oplus_\R \mathfrak{sp} (1)
\end{align*}
with $\mathfrak{gl} (n,\mathbb{H}) = \left\{ \begin{pmatrix}
        A&-\overline{B}\\B&\overline{A}
    \end{pmatrix} \mid A,B\in \C^{n\times n}\right\}$ and $\mathfrak{sl} (n,\mathbb{H}) = \lbrace T\in \mathfrak{gl} (n,\mathbb{H}) \mid \Tr_\C (T)=0\rbrace$.
Consider the representation $\rho \colon G\rightarrow \Aut (E\tenc H)$ and extend $T_e\rho$ to a $\C$-linear map $\rho \colon \frg\tenr \C\rightarrow \End (E\tenc H)$, which is injective.\\
Due to Remark~\ref{quatlabel29} and by grouping the $E$ (or its dual) and $H$ factors, we have
\begin{align*}
    &\End_\C (E\otimes_\C H)=(E\otimes_\C H)\otimes (E\otimes_\C H)^\ast\cong (E\otimes_\C H)\otimes (E^\ast\otimes_\C H^\ast)\\
    \cong & (E\otimes_\C H)\otimes (E^\ast\otimes_\C H) \cong (E\otimes_\C E^\ast )\otimes_\C (H\otimes_\C H) \\
    =& (E\otimes_\C E^\ast )\otimes_\C (\underbrace{\Lambda^2 H}_{=\C\cdot \omega_H}\oplus_\C \Sym^2 H)
\end{align*}
and
\begin{align}
\label{quatlabel32}
    T_e\rho (\frg\otimes_\R\C)= ((E\otimes_\C E^\ast )\otimes_\C \C\cdot \omega_H)\oplus_\C (\C\cdot \Id_E\otimes_\C \Sym^2 H).
\end{align}
We note that $T_e\rho$ is a $G$-equivariant map and that $\omega_H$ corresponds to $-\Id_H$ with respect to the isomorphism $H\otimes_\C H\cong H\otimes_\C H^\ast=\End(H)$.\\
In a similar way, due to Remark~\ref{quatlabel30} and the embedding (\ref{quatlabel32}), we obtain 
\begin{align}
    &(\R^{4n}\otimes_\R\C)^\ast\otimes_\C (\frg\otimes_\R \C)\notag\\
    \cong & (E^\ast\otimes_\C H)\otimes_\C (((E\otimes_\C E^\ast )\otimes_\C \C\cdot \omega_H)\oplus_\C (\C\cdot \Id_E\otimes_\C \Sym^2 H)) \label{quatlabel3}\\
    \cong & (E^\ast\tenc E\tenc E^\ast \tenc H\tenc \C\cdot \omega_H) \oplus_\C (E^\ast\tenc \C\cdot \Id_{E} \tenc H\tenc \Sym^2 H)\nonumber\\
    \cong & (\Lambda^2 E^\ast\tenc E\tenc H\tenc \C\cdot \omega_H) \oplus_\C (\Sym^2 E^\ast\tenc E\tenc H\tenc \C\cdot \omega_H) \notag\\
    &\oplus_\C (E^\ast\tenc \C\cdot \Id_{E} \tenc H\tenc \Sym^2 H)\label{quatlabel4},
\end{align}
since $E^\ast\tenc E\tenc E^\ast \cong E^\ast\tenc E^\ast \tenc E=(\Lambda^2 E^\ast \oplus_\C \Sym^2 E^\ast )\tenc E$. We now want to further decompose this into irreducible representations. To do so, we use \cite[Proposition~II.4.14]{broetomdieck} and therefore consider the representations of the groups $\GL (n,\mathbb{H})$ and $\spn{n}$ separately at first.\\
Consider the $\GL (n,\mathbb{H})$-equivariant contraction
\begin{align*}
    \mu \colon E^\ast\tenc E^\ast \tenc E &\rightarrow E^\ast\\
    \alpha_1\tenc \alpha_2\tenc v &\mapsto \alpha_2(v)\cdot\alpha_1
\end{align*}
and the restrictions of $\mu$ to $\Lambda^2 E^\ast \tenc E$ and $\Sym^2 E^\ast \tenc E$. Let
\begin{align*}
    C :=& \kernel (\mu_{\vert \Lambda^2 E^\ast \tenc E}),\\
    D :=& \kernel (\mu_{\vert \Sym^2 E^\ast \tenc E}),
\end{align*}
which are irreducible representations of $\GL (n,\mathbb{H})$ as mentioned in \cite[p.~36]{salamonDGQM}. In the same way, we obtain an isomorphism $E^\ast \tenc \C\cdot \Id_E\rightarrow E^\ast , \alpha\tenc \Id_E \mapsto 2n\alpha$.\\
This provides the decompositions $\Lambda^2 E^\ast \tenc E\cong E^\ast \oplus_\C C$ and $\Sym^2 E^\ast \tenc E \cong E^\ast \oplus_\C D$ into irreducible representations.\\
For the factors involving the space $H$, we consider the isomorphism 
\begin{align*}
    H\tenc \C\cdot \omega_H &\rightarrow H\\
    h\tenc \omega_H &\mapsto h
\end{align*}
and the decomposition $H\tenc \Sym^2 H \cong H\oplus_\C \Sym^3H$ (via contraction with $\omega_{H^\ast}\in \Lambda^2H^\ast$ or symmetrisation, due to the symmetry of the tensors, it is irrelevant how the contraction is applied), i.e.\ we have a map
\begin{align*}
    H\tenc \Sym^2 H &\rightarrow H\\
    h_1\tenc h_2\cdot h_3 &\mapsto \omega_{H^\ast} (h_1,h_2)h_3 + \omega_{H^\ast} (h_1,h_3)h_2.
\end{align*}
For $m\in \mathbb{N}$ and a vector space $V$, we write $mV:=\bigoplus\limits_{k=1}^m V$. Next, we summarise the results and obtain the following decomposition into irreducible representations
\begin{align*}
    &(\R^{4n}\otimes_\R\C)^\ast\otimes_\C (\frg\otimes_\R \C) \\
    \cong& (E^\ast \tenc H)\oplus_\C (C\tenc H) \oplus_\C (E^\ast \tenc H) \oplus_\C (D \tenc H) \\
    &\oplus_\C (E^\ast \tenc H) \oplus_\C (E^\ast \tenc \Sym^3 H)\\
    =& 3(E^\ast \tenc H) \oplus_\C (C\tenc H) \oplus_\C (D \tenc H) \oplus_\C (E^\ast \tenc \Sym^3 H),
\end{align*} where the ordering of the summands in (\ref{quatlabel4}) has been preserved, i.e.\ the ordering of the triples from $3(E^\ast\tenc H)$ corresponds exactly to the ordering of their appearance in (\ref{quatlabel4}).\\
The irreducible summand $E^\ast \tenc H$ is the only summand in the decomposition with a multiplicity greater than $1$.\\

We use an analogous approach to decompose $\Lambda^2 (\R^{4n}\tenr \C)^\ast \tenc (\R^{4n}\tenr \C)$ into irreducible representations. Here, we also have a decomposition $\Sym^2H\tenc H = H\oplus_\C \Sym^3 H$ with the help of the map
\begin{align*}
    \Sym^2 H\tenc H &\rightarrow H\\
    h_1\cdot h_2\tenc h_3 &\mapsto \omega_{H^\ast} (h_1,h_3)h_2 + \omega_{H^\ast} (h_2,h_3)h_1.
\end{align*}
Due to Remark~\ref{quatlabel30}, we therefore obtain
\begin{align*}
    &\Lambda^2 (\R^{4n}\tenr \C)^\ast \tenc (\R^{4n}\tenr \C) \cong  \Lambda^2 (E^\ast\tenc H) \tenc (E\tenr H)\\
    \cong & ((\Lambda^2 E^\ast \tenc \Sym^2 H) \oplus_\C (\Sym^2 E^\ast \tenc \Lambda^2 H))\tenc (E\tenc H)\\
    \cong & (\Lambda^2 E^\ast \tenc E \tenc \Sym^2 H\tenc H) \oplus_\C (\Sym^2 E^\ast\tenc E \tenc \C \cdot \omega_H \tenc H)\\
    \cong & (E^\ast \tenc H) \oplus_\C (C \tenc H) \oplus_\C (E^\ast \tenc \Sym^3 H) \\
    & \oplus_\C (C\tenc \Sym^3 H)\oplus_\C (E^\ast \tenc H) \oplus_\C (D \tenc H)\\
    =& 2(E^\ast\tenc H)\oplus_\C (C \tenc H) \oplus_\C (E^\ast \tenc \Sym^3 H) \oplus_\C (C\tenc \Sym^3 H)\oplus_\C (D \tenc H).
\end{align*}
As with the decomposition of $(\R^{4n}\otimes_\R\C)^\ast\otimes_\C (\frg\otimes_\R \C)$, we also preserve the ordering of the summands here.\\
With regard to the isomorphisms above, we now need to determine the kernel of the map
\begin{align*}
    \iota\colon (E^\ast \tenc H)\tenc (E\tenc H)\tenc (E^\ast \tenc H)&\rightarrow \Lambda^2 (E^\ast \tenc H)\tenc (E\tenc H)\\
    (\alpha_1\tenc h_1)\tenc (v\tenc h)\tenc (\alpha_2\tenc h_2)&\mapsto (\alpha_1\tenc h_1)\wedge (\alpha_2\tenc h_2)\tenc (v\tenc h)
\end{align*}
restricted to $(\R^{4n}\otimes_\R\C)^\ast\otimes_\C (\frg\otimes_\R \C)$ to obtain $\frg^1\tenr \C$.
\begin{remark}
    With regard to the reordering $\Lambda^2 (E^\ast\tenc H) \cong (\Lambda^2 E^\ast \tenc \Sym^2 H) \oplus_\C (\Sym^2 E^\ast \tenc \Lambda^2 H)$, $(\alpha_1\tenc h_1)\wedge (\alpha_2\tenc h_2)$ corresponds to $\frac{1}{2}(\alpha_1\wedge\alpha_2)\tenc (h_1\cdot h_2)+\frac{1}{2}(\alpha_1\cdot\alpha_2)\tenc (h_1\wedge h_2)$.
\end{remark}
\noindent To do this, we first consider the summands in the decomposition of $(\R^{4n}\otimes_\R\C)^\ast\otimes_\C (\frg\otimes_\R \C)$ with multiplicity $1$. For each of these summands, it is very easy to find an element that is not mapped to $0$ by $\iota$. By Schur's lemma \cite[Theorem~II.1.10]{broetomdieck}, it follows that $\iota$ is injective on this summand and maps only to an isomorphic summand in $\Lambda^2 (\R^{4n}\tenr \C)^\ast \tenc (\R^{4n}\tenr \C)$.\\
This leaves only the summand $E^\ast \tenc H$ with multiplicity $3$ to be considered:\\
Let $h,\widetilde{h}$ be a standard basis of $H$, i.e.\ $\omega_{H^\ast}(h,\widetilde{h})=1$ and $\widetilde{h}=J_r(h)$, and let $e_1,\ldots,e _{2n}$ be a basis of $E$ with dual basis $e^1,\ldots,e^{2n}\in E^\ast$. In particular, we then have $\omega_H = h\wedge\widetilde{h}$. We now define
\begin{align*}
\beta_1 := & (e^1\tenc h) \tenc (\Id_E \tenc \omega_H) \in (E^\ast\tenc H)\tenc (\R\cdot \Id_{\C^{2n}}\tenr \C),\\
\beta_2 := & (\sum\limits_{k=1}^{2n} (e^k\tenc h)\tenc (e_k\tenc e^1\tenc \omega_H))-\frac{1}{2n}\beta_1 \in (E^\ast\tenc H)\tenc (\mathfrak{sl}(n,\mathbb{H})\tenr\C),\\
\beta_3 := & (e^1\tenc \widetilde{h})\tenc (\Id_E\tenc h\cdot h)\\
&-(e^1\tenc h)\tenc (\Id_E\tenc h\cdot \widetilde{h})\in (E^\ast\tenc H)\tenc (\mathfrak{sp} (1)\tenr \C),
\end{align*}
where the elements have the form and ordering of elements of space (\ref{quatlabel3}).\\
With respect to the decomposition above, we have:
    \begin{align*}
        \beta_1 &\text{ corresponds to } \left(\frac{1}{2}(2n-1)e^1\tenc h , \frac{1}{2}(2n+1)e^1\tenc h , 0\right),\\
        \beta_2 &\text{ corresponds to } \left(-\frac{1}{2}(2n-\frac{1}{2n})e^1\tenc h , \frac{1}{2}(2n-\frac{1}{2n})e^1\tenc h , 0\right),\\
        \beta_3 &\text{ corresponds to } \left(0 , 0 , -6ne^1\tenc h\right).
    \end{align*}
Thus, $\langle \beta_1,\beta_2,\beta_3\rangle_\C=3\C\cdot (e^1\tenc h)\subset 3(E^\ast\tenc H)$. By replacing $e^1$ with $e^j$ and/or swapping $h$ and $\widetilde{h}$ in the definition of $\beta_1,\beta_2,\beta_3$, we obtain the entire space $3(E^\ast\tenc H)$.
\begin{lemma}[{\cite[Lemma~2.2]{salamonPhD}}]
\label{quatlabel5}
    We have
    \begin{align}
    \label{quatlabel6}
        \beta:=(n+1)\beta_1+2n\beta_2+n\beta_3\in \kernel (\iota_{\vert (\R^{4n}\otimes_\R\C)^\ast\otimes_\C (\frg\otimes_\R \C)}) = \frg^1\tenr \C
    \end{align}
    and thus, in particular, it follows that $\frg^1\tenr \C \cong E^\ast \tenc H$ as representations of $G$. As a result, $\frg^1\tenr \C$ is an irreducible representation.
\end{lemma}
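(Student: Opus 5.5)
The plan is to use Schur's lemma to reduce the computation of $\kernel(\iota_{\vert (\R^{4n}\tenr\C)^\ast\tenc(\frg\tenr\C)})$ to a small linear-algebra problem on multiplicity spaces, and then to check the single explicit vector $\beta$. Since $\iota$ is $G$-equivariant, its kernel $\frg^1\tenr\C$ is a $G$-subrepresentation, and $\iota$ maps each isotypic component of the source into the isotypic component of the same type in $\Lambda^2(\R^{4n}\tenr\C)^\ast\tenc(\R^{4n}\tenr\C)$.

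By the discussion preceding the lemma, $\iota$ is injective on the multiplicity-one summands $C\tenc H$, $D\tenc H$ and $E^\ast\tenc\Sym^3H$. Hence the kernel lies entirely in the isotypic component $3(E^\ast\tenc H)$. This component is mapped into $2(E^\ast\tenc H)$ in the target, which is the decomposition computed above. Writing $3(E^\ast\tenc H)\cong\C^3\tenc(E^\ast\tenc H)$ and $2(E^\ast\tenc H)\cong\C^2\tenc(E^\ast\tenc H)$, Schur's lemma shows that $\iota$ restricted to this component has the form $A\tenc\Id$ for a linear map $A\colon\C^3\to\C^2$. So $\frg^1\tenr\C\cong\kernel(A)\tenc(E^\ast\tenc H)$, and it suffices to prove two things: $\beta\in\kernel(\iota)$, which gives $(n+1,2n,n)\in\kernel(A)$ in the basis $\beta_1,\beta_2,\beta_3$; and $\operatorname{rank}A=2$.

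The core step is computing $\iota(\beta_1)$, $\iota(\beta_2)$ and $\iota(\beta_3)$ explicitly. These three vectors span the fibre $\C^3\tenc\C(e^1\tenc h)$, so this determines $A$.

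\begin{itemize}
\item I would compute directly on tensors. For $\alpha\tenc X\in(E^\ast\tenc H)\tenc\frg_\C$, the image is $\iota(\alpha\tenc X)(Y_1,Y_2)=\alpha(Y_1)XY_2-\alpha(Y_2)XY_1$, with $XY$ computed via (\ref{quatlabel32}). Here $\Id_E\tenc\omega_H$ acts as $-\Id$, $e_k\tenc e^1\tenc\omega_H$ acts as $-(e_k\tenc e^1)\tenc\Id_H$, and $\Id_E\tenc h_1\cdot h_2$ acts via $\omega_{H^\ast}$ on the $H$-factor.
\item I would then project onto the two copies of $E^\ast\tenc H$ in the target. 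The first copy comes from $\Lambda^2E^\ast\tenc E\tenc\Sym^2H\tenc H$ via $\mu$ and the $\Sym^2H\tenc H\to H$ contraction. The second comes from $\Sym^2E^\ast\tenc E\tenc\omega_H\tenc H$ via $\mu$. The Remark on $(\alpha_1\tenc h_1)\wedge(\alpha_2\tenc h_2)$ fixes the factors $\tfrac12$.
\item This yields two scalars for each $\beta_j$, i.e.\ the columns of $A$. I would then verify that $(n+1)\,\iota(\beta_1)+2n\,\iota(\beta_2)+n\,\iota(\beta_3)=0$.
\end{itemize}

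Finally, I would show that two of the columns (for instance those of $\beta_1$ and $\beta_3$) are linearly independent. Then $\dim\kernel(A)=1$, so $\frg^1\tenr\C$ is the $G$-span of $\beta$ and is isomorphic to $E^\ast\tenc H$, which is irreducible.

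The main obstacle is the bookkeeping of normalisation constants in this step: the factors $\tfrac12$, $2n$, signs from $\omega_H\leftrightarrow-\Id_H$, and the symmetrisation conventions for $h\cdot h$. A slip in any of these changes the coefficients $(n+1,2n,n)$. To guard against this, I would double-check the final identity by evaluating $\iota(\beta)$ on concrete test vectors, e.g.\ $Y_1=e_1\tenc\widetilde h$ and $Y_2=e_1\tenc h$, and also $Y_1=e_1\tenc h$ and $Y_2=e_2\tenc h$, rather than relying solely on the projections.
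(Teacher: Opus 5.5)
Your proposal is correct and follows essentially the same route as the paper. Schur's lemma disposes of the multiplicity-one summands $C\tenc H$, $D\tenc H$ and $E^\ast\tenc\Sym^3 H$, and everything reduces to the multiplicity space of $3(E^\ast\tenc H)$ spanned by $\beta_1,\beta_2,\beta_3$, where one checks $\iota(\beta)=0$. The paper defers that computation to Salamon's thesis; carrying it out with $\omega_H\leftrightarrow-\Id_H$ and the $\omega_{H^\ast}$-pairing does give $\kernel(A)=\C\cdot(n+1,2n,n)$. Your extra rank-two step, via the independence of $\iota(\beta_1)$ and $\iota(\beta_3)$, is exactly what is needed to conclude that the kernel is a single copy of $E^\ast\tenc H$, a point the paper leaves implicit.
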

We will need $(n+1)\beta_1+2n\beta_2+n\beta_3$ later on as an exemplary element of $\frg^1\tenr \C$ because Schur's lemma \cite[Theorem~II.1.10]{broetomdieck} can be used to easily prove properties of certain equivariant maps.
\subsection{Twistor Space of a Quaternionic Manifold} \label{quatsect3.3}
In this section, we review the definition of the twistor space and the relevant results from \cite{salamonPhD}. From now on, let $M$ always be a quaternionic manifold with its $G$-structure $P$ and a fixed torsion-free connection on $P$ which provides a quaternionic structure on $TM$.\\
The subset $Z:=\lbrace p\circ R_\nu\circ p^{-1}\mid p\in P, \nu\in Z_\mathbb{H}\rbrace\subset Q$ is called the \textbf{twistor space} of $M$. We denote the projection by $\pi_Z\colon Z\rightarrow M$. Thus, $Z_x:=\pi_Z^{-1} (x)$ consists of complex structures on $T_xM$. Instead of $z\in Z$, we often write $J_z$ for the corresponding complex structure on $T_{\pi_Z(z)}M$.\\
Consider the eigenvectors of elements from $Z_\mathbb{H}\subset\End(H)$. We obtain a diffeomorphism $Z\rightarrow\underline{\mathbf {P} (H\tenc\phi^{-1})}\linebreak\cong \underline{\mathbf{P}H}$, which maps $p\circ R_\nu\circ p^{-1}$ to $[p,[h\tenc 1]]$ for an eigenvector $h\neq 0$ of $\nu$ to the eigenvalue $i$, where $G$ acts on $\mathbf {P} (H\tenc\phi^{-1})$ by $[T,\lambda]\cdot [h\tenc 1]:=[(\det_\C(T))^{-\frac{1}{2(n+1)}}\lambda (h)\tenc 1]$ (analogously on $\mathbf{P} H$).
\subsection*{Complex structure on $Z$}
A torsion-free connection on $P$ induces a decomposition $TZ=T^VZ\oplus T^HZ$. Here, ${T\pi_Z}_{\vert T^HZ}\colon T_z^HZ\rightarrow T_{\pi_Z(z)}M$ is an isomorphism, and since the fibre $Z_x\cong \mathbf {P} (H\tenc\phi^{-1})$ over every point $x\in M$ is a complex manifold with its tangent space $T^VZ_{\vert Z_x}$, we can now define an almost complex structure on $TZ$, which, according to \cite{salamonPhD} and \cite{salamonDGQM}, is indeed integrable and independent of the choice of connection on $P$. We denote this complex structure by $\mathcal{J}\in\End(TZ)$. On $T^VZ$, $\mathcal{J}$ is given by the complex structure of the fibres, i.e.\ $\mathbf{P}(H\tenc\phi^{-1})$, and on $T^HZ$ we have $\mathcal{J}_z = ({T\pi_Z}_ {\vert T^HZ})^{-1}\circ J_z\circ {T\pi_Z}_{\vert T^HZ}$.
\subsection*{Holomorphic vector bundles}
The action of $G$ on powers of the tautological line bundle $\mathcal{O}(-r)$ of $\mathbf{P}H$ or its dual $\mathcal{O} (r)$, which is defined by $[T,\lambda]\cdot ([h],yh^{\otimes r}): =([\lambda(h)],y(\lambda h)^{\otimes r})$ for $([h],yh^{\otimes r})\in\mathcal{O}(-r)$ and $[T,\lambda]\cdot s:=s\circ (\lambda^{-1})^ {\otimes r}$ for $s\in\mathcal{O} (r)$, is only well-defined for $r\in2\cdot\N_0$. Thus, for $r\in 2\cdot\N_0$, there exists a globally defined line bundle $L^{-r}:=\underline{\mathcal{O}(-r)}$ and its dual $L^r := \underline{\mathcal{O} (r)}$. However, due to the construction of the complex structure of $Z$, these two line bundles are generally not holomorphic line bundles on the whole of $Z$. Yet, $\mathcal{L}^{-r}:=L^{-r}\tenc \pi_Z^\ast\underline{\phi^{-r}}$ and its dual $\mathcal{L}^r:=L^r\tenc \pi_Z^\ast\underline{\phi^r}$ are always holomorphic line bundles. We regard $\mathcal{L}^{-r}$ as a subspace of $\underline{\mathcal{O}(-r)\tenc \phi^{-r}}$ and $\mathcal{L}^r$ as a subspace of $\underline{\mathcal{O}(r)\tenc \phi^r}$. For $s\in\Gamma^{\operatorname{hol}}(\mathbf{P}H,\mathcal{O}(r))$, $\underline{s}_{[p,[h]]}: =[p,([h],s_{[h]}\tenc 1)]$ is a local holomorphic section in $\mathcal{L}^r$.\\

For $J_z\in Z_x$, $z=[p,[h]]$, we denote by $T^{1,0}_xM_{J_z}$ the eigenspace of $J_z$ on $T_xM\tenr\C$ corresponding to the eigenvalue $i$ and by $T^{0,1} _xM_{J_z}$ the eigenspace corresponding to the eigenvalue $-i$. With $TM\tenr\C\cong \underline{E\tenc H}$ and $T^\ast M\tenr\C\cong \underline{E^\ast\tenc H}$, we get
 \begin{align*}
        (T^{1,0}_x M_{J_z})^\ast &\cong \lbrace [p,\eta\otimes h]\mid p\in P,\eta\in E^\ast\rbrace,\\
        (T^{0,1}_xM_{J_z})^\ast &\cong \lbrace [p,\eta\otimes J_r(h)]\mid p\in P,\eta\in E^\ast\rbrace,
\end{align*}
and
\begin{align}
        T^{1,0}_xM_{J_z} &\cong \lbrace [p,e\otimes J_r(h)]\mid p\in P,e\in E\rbrace,\notag\\
        T^{0,1}_xM_{J_z} &\cong \lbrace [p,e\otimes h]\mid p\in P,e\in E\rbrace . \label{quatlabel7}
\end{align}
We define the subvector bundle $B_2\subset \Lambda^2T^\ast M\tenr\C$ by $B_{2,x}:=\bigcap_{J_z\in Z_x}\Lambda^{1,1}T^{\ast}_xM_{J_z}\subset \Lambda^2 T^\ast_x M\otimes_\R \C$ for every $x\in M$. 
\begin{definition}
    We call a complex vector bundle $\pi_F\colon F\rightarrow M$ together with a connection $\nabla^F$ a \textbf{$B_2$-vector bundle} if the curvature of $\nabla^F$ only has form components in $B_2$.
\end{definition}
Due to $\pi_Z^\ast\colon \Lambda^{p,q}T^\ast_xM_{J_z}\rightarrow \Lambda^{p,q}T^\ast_zZ$, the next result follows from \cite[Proposition~1.3.7]{kobayashiDGMCVB}:
\begin{theorem}[{\cite[Theorem~5.3]{salamonQM}}]
    For a $B_2$-vector bundle $\pi_F:F\rightarrow M$ with connection $\nabla^F$, $\pi_Z^\ast F\rightarrow Z$ with the pullback connection is a holomorphic vector bundle.
\end{theorem}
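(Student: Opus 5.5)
The statement to be proved is the theorem just quoted: for a $B_2$-vector bundle $F$ and the pullback connection, $\pi_Z^\ast F\rightarrow Z$ is a holomorphic vector bundle. The plan is to use the Koszul--Malgrange criterion in the form of \cite[Proposition~1.3.7]{kobayashiDGMCVB}. A complex vector bundle with a connection $\nabla$ on a complex manifold admits a unique holomorphic structure whose $\dolb$-operator is $\nabla^{0,1}$ as soon as the curvature $R^\nabla$ has vanishing $(0,2)$-component. It therefore suffices to show that the curvature of the pullback connection $\pi_Z^\ast\nabla^F$ on $\pi_Z^\ast F$ is of type $(1,1)$, or more weakly has no $(0,2)$-part, with respect to the complex structure $\mathcal{J}$ on $Z$.

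The first step is the naturality of curvature under pullback: $R^{\pi_Z^\ast\nabla^F}=\pi_Z^\ast R^{\nabla^F}$, viewed as a section of $\Lambda^2T^\ast Z\tenr\C\otimes\End(\pi_Z^\ast F)$. At a point $z\in Z$ with $x=\pi_Z(z)$, this form vanishes on $T^V_zZ$, since $T\pi_Z$ kills vertical vectors. It is determined by its values on $T^H_zZ$, which $T\pi_Z$ identifies with $T_xM$.

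The second step checks types. By the description of $\mathcal{J}$ in Section~\ref{quatsect3.3}, $T\pi_Z$ restricted to $T^H_zZ$ intertwines $\mathcal{J}_z$ with $J_z$. The decomposition $TZ=T^VZ\oplus T^HZ$ is $\mathcal{J}$-invariant, so $T\pi_Z$ maps $T^{0,1}_zZ$ onto $T^{0,1}_xM_{J_z}$, with vertical $(0,1)$-vectors going to $0$. Dually, $\pi_Z^\ast$ maps $\Lambda^{p,q}T^\ast_xM_{J_z}$ into $\Lambda^{p,q}T^\ast_zZ$, which is the observation quoted before the theorem. By the $B_2$ hypothesis, $R^{\nabla^F}_x\in B_{2,x}\otimes\End(F_x)\subset\Lambda^{1,1}T^\ast_xM_{J_z}\otimes\End(F_x)$, and this holds for every $J_z\in Z_x$ simultaneously. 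Hence $(\pi_Z^\ast R^{\nabla^F})_z$ lies in $\Lambda^{1,1}T^\ast_zZ\otimes\End(F_x)$ for every $z$, so its $(0,2)$-component vanishes. The cited proposition then produces the holomorphic structure, and the theorem follows.

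The main point requiring care is the second step, namely that $\pi_Z^\ast$ preserves bidegree. This rests on $\mathcal{J}$ being defined fibrewise on the horizontal space by $J_z$ itself and on the horizontal/vertical splitting being complex. Because the curvature is pulled back and never evaluated on vertical vectors, the choice of torsion-free connection used to build $T^HZ$ does not matter. Integrability of $\mathcal{J}$ is assumed from \cite{salamonPhD}, as stated in Section~\ref{quatsect3.3}.
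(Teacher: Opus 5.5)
Your proposal is correct and follows the paper's approach: the paper likewise deduces the theorem from \cite[Proposition~1.3.7]{kobayashiDGMCVB}, using that $\pi_Z^\ast$ maps $\Lambda^{p,q}T^\ast_xM_{J_z}$ into $\Lambda^{p,q}T^\ast_zZ$, so the pulled-back curvature, which takes values in $B_2$, has no $(0,2)$-component. You make explicit what the paper leaves implicit, namely naturality of curvature under pullback and complex-linearity of $T_z\pi_Z\colon (T_zZ,\mathcal{J}_z)\rightarrow (T_xM,J_z)$.
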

The complex structure of $Z$ and its holomorphic vector bundles will be very helpful in the later sections.
\begin{remark}
    With the decomposition $\Lambda^2 (E^\ast\tenc H)=(\Sym^2E^\ast \tenc \Lambda^2H)\oplus(\Lambda^2E^\ast\tenc \Sym^2H)\cong (\Sym^2E^\ast)\oplus(\Lambda^2E^\ast\tenc \Sym^2H)$, we get $B_2\cong\underline{\Sym^2E^\ast}$.
\end{remark}
\begin{example}[\cite{salamonQM}]
    The twistor space of $M=\mathbf{P}^n\mathbb{H}$ is given by $Z=\mathbf{P}^n\mathbb{C}$, where
    \begin{align*}
        \pi_Z\colon \mathbf{P}^n\mathbb{C}&\rightarrow \mathbf{P}^n\mathbb{H}\\
        \left[\begin{pmatrix}
            u\\v
        \end{pmatrix}\right]_\C &\mapsto \left[\begin{pmatrix}
            u\\v
        \end{pmatrix}\right]_\mathbb{H}
    \end{align*}
    together with the identification $\mathbb{H}^n\rightarrow \C^{2n}$.
\end{example}
\subsection{Salamon Complex} \label{quatsect3.4}
With the decomposition into subrepresentations 
\begin{align*}
    \Lambda^k (E^\ast\otimes_\C H)=\bigoplus\limits_{0\leq b\leq a\leq 2n,a+b=k} \Lambda^{a,b}E^\ast \otimes_\C \Sym^{a-b}H
\end{align*}
from \cite{kkquattorsion}, where $\Lambda^{a,b}E^\ast\subset \Lambda^{a}E^\ast\tenc\Lambda^{b}E^\ast$ is the kernel of a certain $\GL (n,\mathbb{H})$-equivariant map, one obtains a decomposition 
\begin{align*}
    \Lambda^k(T^\ast M\otimes_\R\C)\cong\bigoplus\limits_{0\leq b\leq a\leq 2n,a+b=k} \underline{\Lambda^{a,b}E^\ast \otimes_\C \Sym^{a-b}H}
\end{align*}
with $T^\ast M\otimes_\R \C\cong \underline{E^\ast \otimes_\C H}$. Here, canonically $\Lambda^{k,0}E^\ast\cong \Lambda^k E^\ast$ holds.
\begin{remark}
\label{quatlabel8}
    We have $\Lambda^{k,0}E^\ast \otimes_\C \Sym^{k}H\cong \Lambda^{k}E^\ast \otimes_\C \Sym^{k}H$ and for any $1$-form $\eta\otimes h\in E^\ast \otimes H$ and a $k$-form $(\eta_1\wedge\ldots\wedge \eta_k)\otimes_\C (h_1\cdot\ldots\cdot h_k)\in \Lambda^{k}E^\ast \otimes_\C \Sym^{k}H$, the wedge product is given by
    \begin{align*}
        (\eta\otimes_\C h)\wedge ((\eta_1\wedge\ldots\wedge \eta_k)\otimes_\C (h_1\cdot\ldots\cdot h_k))
        = &\frac{1}{k+1} (\eta\wedge\eta_1\wedge\ldots\wedge \eta_k)\otimes_\C (h\cdot h_1\cdot\ldots\cdot h_k)\\
        &+ (\text{terms outside }\Lambda^{k}E^\ast \otimes_\C \Sym^{k}H)
    \end{align*}
\end{remark}
Together with \cite[Theorem~3.3.4]{kkdiffgeoENG}, this remarks provides motivation for the following definition of the differential operator $\delta_r$ (see Remark~\ref{quatlabel9} below).\\
\hfill\break
Let $k\in \N_0$, $r\in 2\cdot\N_0$, we define
\begin{align*}
    V^{k,r}:= \underline{\Lambda^k E^\ast \otimes_\C \Sym^{k+r} H\otimes_\C\phi^r}
\end{align*}
to be the associated vector bundle to the $G$-representation $\Lambda^k E^\ast \otimes_\C \Sym^{k+r} H\otimes_\C\phi^r$.
\begin{proposition}[{\cite[Proposition~3.2]{salamonPhD}}]
    Let the $G$-equivariant homomorphism
     \begin{align*}
    \kappa \colon (E^\ast \otimes_\C H)\otimes_\C(\Lambda^s E^\ast \otimes_\C \Sym^{t} H\otimes_\C\phi^r) \rightarrow  \Lambda^{s+1} E^\ast \otimes_\C \Sym^{t+1} H \otimes_\C\phi^r   
    \end{align*}
    be defined with $s,t\in \mathbb{N}_0$, $r\in\R$ on basis elements as
    \begin{align*}
        &\kappa ((\eta\otimes_\C h)\otimes_\C(\eta_{1}\wedge\ldots \wedge \eta_{s})\otimes_\C (h_{1}\cdot\ldots\cdot h_{t})\otimes c) \\
        :=& \frac{1}{t+1}(\eta\wedge\eta_{1}\wedge\ldots \wedge \eta_{s})\otimes_\C (h\cdot h_{1}\cdot\ldots\cdot h_{t})\otimes_\C c.
    \end{align*}
    Then $\kappa$ is always surjective, but $\image (\sigma)\subset \kernel (\kappa)$ only holds if $r=t-s$.
\end{proposition}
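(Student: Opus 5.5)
The plan is to treat the two assertions separately. Surjectivity is elementary. The invariance condition is reduced, using the irreducibility of $\frg^1\tenr\C$ from Lemma~\ref{quatlabel5}, to evaluating $\kappa\circ\sigma$ on a single explicit vector built from the element $\beta$.

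\emph{Surjectivity.} The target $\Lambda^{s+1}E^\ast\tenc\Sym^{t+1}H\tenc\phi^r$ is spanned by decomposable elements $(\eta_0\wedge\eta_1\wedge\ldots\wedge\eta_s)\tenc(h_0\cdot h_1\cdot\ldots\cdot h_t)\tenc c$. By definition, such an element equals
\begin{align*}
(t+1)\,\kappa\big((\eta_0\tenc h_0)\tenc(\eta_1\wedge\ldots\wedge\eta_s)\tenc(h_1\cdot\ldots\cdot h_t)\tenc c\big).
\end{align*}
Hence $\kappa$ is surjective. I would also check that $\kappa$ is well defined and $G$-equivariant. This holds because it is the composition of the wedge map $E^\ast\tenc\Lambda^sE^\ast\to\Lambda^{s+1}E^\ast$ and the symmetrisation $H\tenc\Sym^tH\to\Sym^{t+1}H$, each of which is equivariant.

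\emph{Reduction to one vector.} Write $V=\Lambda^sE^\ast\tenc\Sym^tH\tenc\phi^r$. The map $\sigma$ is $G$-equivariant, so $\kappa\circ\sigma\colon(\frg^1\tenr\C)\tenc V\to\Lambda^{s+1}E^\ast\tenc\Sym^{t+1}H\tenc\phi^r$ is $G$-equivariant as well. By Lemma~\ref{quatlabel5}, $\frg^1\tenr\C\cong E^\ast\tenc H$ is irreducible, so it is the span of the $G$-translates of $\beta$ (and of the variants obtained by replacing $e^1$ by $e^j$ or swapping $h,\widetilde h$). Consequently $\image(\sigma)\subset\kernel(\kappa)$ holds if and only if $\kappa(\sigma(\beta\tenc v))=0$ for all $v\in V$. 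The converse direction is then automatic once the single scalar computed below vanishes.

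\emph{Computing $\kappa(\sigma(\beta\tenc v))$.} I would compute the three contributions $\kappa\sigma(\beta_i\tenc v)$ separately, using the action of $\frg\tenr\C$ on each tensor factor.
\begin{itemize}
\item The centre $\Id_E\tenc\omega_H$ acts on $\Lambda^sE^\ast$ by a multiple of $s$ and on $\phi^r$ by the derivative of $\det_\C^{\,r/(2(n+1))}$, namely $\tfrac{2nr}{2(n+1)}$ up to the sign convention. It acts trivially on $\Sym^tH$.
\item The $\mathfrak{sl}(n,\mathbb{H})$-part $\beta_2$ acts only on $\Lambda^sE^\ast$. After applying $\kappa$, the contraction $\sum_k e^k\wedge(\ldots)$ with $e_k\tenc e^1$ produces another multiple of $s$, together with the trace correction $-\frac{1}{2n}\beta_1$.
\item The $\mathfrak{sp}(1)$-part $\beta_3$ acts only on $\Sym^tH$. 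Acting by $h\cdot h$ and $h\cdot\widetilde h$ on $h^t$ or on $\widetilde h^{\,t}$, followed by symmetrisation, produces a multiple of $t$, after dividing by the factor $t+1$ coming from $\kappa$.
\end{itemize}
For the test vector I take $v=(e^2\wedge\ldots\wedge e^{s+1})\tenc\widetilde h^{\,t}\tenc 1$ (with $s<2n$), so that the common output $(e^1\wedge e^2\wedge\ldots\wedge e^{s+1})\tenc(h\cdot\widetilde h^{\,t})\tenc 1$ is non-zero. With this choice, $\kappa\sigma(\beta\tenc v)$ is $(a s+b t+c r)$ times that vector, for explicit constants $a,b,c$ depending only on $n$.

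The goal is then to verify that $a s+bt+cr$ is proportional to $r-(t-s)$. This is exactly where the particular weights $n+1,\,2n,\,n$ in $\beta$ are used. In the degenerate case $s=2n$ the target is zero, so the "only if" must be read for $s<2n$, the only case where the operator is non-trivial.

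The main obstacle is this bookkeeping. Every identification involves a sign or normalisation convention: $\omega_H\leftrightarrow-\Id_H$, the normalisation of $v_1\cdot\ldots\cdot v_k$ as a sum over $S_{k}$, the factor $\tfrac{1}{t+1}$ in $\kappa$, and the action on duals. A single wrong sign would turn $r=t-s$ into $r=t+s$. I would therefore first carry out the computation in the smallest case $n=1$, $s=0$, where $V=\Sym^tH\tenc\phi^r$. There the $\beta_2$-term contributes only through its trace correction, and I would check that the condition reduces to $r=t$ before doing the general case.
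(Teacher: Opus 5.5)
Your route is the paper's: reduce, via irreducibility of $\frg^1\otimes_\R\C$, to the single element $\beta$ of Lemma~\ref{quatlabel5}, and then evaluate $\kappa\circ\sigma$ on $\beta\otimes v$ summand by summand. Your $G$-translate argument is the correct form of the paper's Schur step, and your caveat that $s<2n$ is needed is well taken. There is, however, a gap in the sufficiency direction. The reduction leaves you needing $\kappa(\sigma(\beta\otimes v))=0$ for \emph{every} $v\in V$. You only evaluate at the test vector $v_0=(e^2\wedge\ldots\wedge e^{s+1})\otimes\widetilde h^{\,t}\otimes 1$, and the claim that the converse is then ``automatic'' is unjustified. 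With $\beta$ fixed, the map $v\mapsto\kappa(\sigma(\beta\otimes v))$ is not $G$-equivariant, so irreducibility of $V$ does not help. Vanishing at $v_0$ therefore only proves that $r=t-s$ is necessary.

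You can close this in either of two ways.

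(i) Do the computation for an arbitrary decomposable $v=\eta\otimes x\otimes c$, and show that each $\beta_i$ contributes a $v$-independent multiple of $K_v:=\kappa((e^1\otimes h)\otimes v)$.
For $\beta_1$, the central element acts by $s-\frac{nr}{n+1}$.
For $\beta_2$, let $D_k$ be the derivation $e^j\mapsto\delta_{jk}e^1$ induced by $e_k\otimes e^1\otimes\omega_H$. The identity $\sum_k e^k\wedge D_k\eta=-s\,e^1\wedge\eta$ holds for every $\eta$, and after the trace correction it gives the coefficient $-\frac{(2n+1)s}{2n}$.
For $\beta_3$, let $X_{h'h''}\colon y\mapsto\omega_{H^\ast}(h'',y)h'+\omega_{H^\ast}(h',y)h''$ be the action of $\Id_E\otimes h'\cdot h''$; this convention is forced by $\omega_H\leftrightarrow-\Id_H$. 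You need $\widetilde h\cdot(X_{hh}x)-h\cdot(X_{h\widetilde h}x)=t\,h\cdot x$ for \emph{all} $x\in\Sym^tH$, not just for $x=h^t,\widetilde h^{\,t}$. Checking on all monomials $h^a\cdot\widetilde h^{\,b}$ confirms it.
Altogether $\kappa(\sigma(\beta\otimes v))=n(t-s-r)K_v$ for all $v$, which is what you want.

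(ii) Alternatively, use a multiplicity-one argument. Since $E^\ast\otimes\Lambda^sE^\ast\cong\Lambda^{s+1}E^\ast\oplus(\text{one other irreducible})$ and $H\otimes\Sym^tH\cong\Sym^{t+1}H\oplus\Sym^{t-1}H$, the target occurs exactly once in $(E^\ast\otimes H)\otimes V$. Compose $\kappa\circ\sigma$ with the isomorphism $E^\ast\otimes H\cong\frg^1\otimes_\R\C$, which maps $e^1\otimes h$ to a multiple of $\beta$ by a weight argument. The result is a scalar multiple of $\kappa$, and only with this observation does a single test vector suffice.
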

\begin{proof}
We only cite an outline of the proof, which can be found in \cite{salamonPhD}, and omit the complete calculations. Consider the element $\beta\in \frg^1\tenr \C$ from (\ref{quatlabel6}) and show that for all $v\in \Lambda^s E^\ast \otimes_\C \Sym^{t} H\otimes_\C\phi^r$, that $\kappa\circ \sigma(\beta\tenc v)=0$ holds if and only if $r=t-s$.\\
According to Schur's lemma, due to the irreducibility of $\frg^1\tenr \C$ and $\Lambda^s E^\ast \otimes_\C \Sym^{t} H\otimes_\C\phi^r$, it follows that $\kappa\circ \sigma(\cdot\tenc v)\colon \frg^1\tenr \C\rightarrow \Lambda^{s+1} E^\ast \otimes_\C \Sym^{t+1} H\otimes_\C\phi^r$ is constant $0$.
\end{proof}
\begin{proposition}[{\cite[Proposition~3.3]{salamonPhD}}]
    \label{quatlabel25}
    Let $\nabla$ be the covariant connection on $V^{k,r}$ for every $k\in \N_0$, $r\in 2\cdot\N_0$, which is induced by a torsion-free connection of the $G$-structure $P$ on the associated vector bundle.\\
    This yields a differential operator 
    \begin{align*}
        \delta_r:=\underline{\kappa}\circ\nabla\colon \Gamma (M,V^{k,r})\stackrel{\nabla}{\rightarrow}\Gamma (M, \underline{E^\ast\otimes H}\otimes_\C V^{k,r})\stackrel{\underline{\kappa}}{\rightarrow}\Gamma (M,V^{k+1,r}),
    \end{align*}
    which provides an elliptic complex
    \begin{align*}
        0\rightarrow \Gamma(M,V^{0,r})\stackrel{\delta_r}{\rightarrow}\ldots\stackrel{\delta_r}{\rightarrow}\Gamma(M,V^{2n,r})\stackrel{\delta_r}{\rightarrow} 0.
    \end{align*}
    We call this complex the \textbf{Salamon complex}.\\
    Let $F\rightarrow M$ be a $B_2$-vector bundle with a connection $\nabla^F$. With
    \begin{align*}
        \nabla\otimes 1+1\otimes \nabla^F \colon\Gamma(M,V^{k,r}\otimes_\C F)\rightarrow \Gamma(M,\underline{E^\ast\otimes_\C H}\otimes_\C V^{k,r}\otimes_\C F),
    \end{align*}
    we similarly obtain a differential operator. We define
    \begin{align*}
        \delta_{F,r}:=\underline{\kappa}^F\circ (\nabla\otimes 1+1\otimes \nabla^F)
    \end{align*}
    with $\underline{\kappa}^F:= \underline{\kappa}\otimes_\C\Id_F$.
    The differential operator $\delta_{F,r}$ also yields an elliptic complex.\\
    The differential operators $\delta_r$ and $\delta_{F,r}$ are in both cases independent of the choice of torsion-free connection of $P\subset \GL (M)$.
\end{proposition}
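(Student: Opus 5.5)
The plan is to establish, in this order, (a) that $\delta_r$ and $\delta_{F,r}$ are well defined and independent of the torsion-free connection, (b) that they square to zero, and (c) that the resulting sequences are elliptic.

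\textbf{Step (a).} On $V^{k,r}$ the homomorphism $\kappa$ is the one from \cite[Proposition~3.2]{salamonPhD} with $s=k$ and $t=k+r$. Hence $r=t-s$ holds automatically, so $\image(\sigma)\subset\kernel(\kappa)$. Since $\kappa$ is surjective, \cite[Proposition~5.1]{salamonDGQM} shows that $\delta_r=\underline{\kappa}\circ\nabla$ does not depend on the torsion-free connection form $\omega$.

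For the twisted operator, take two torsion-free forms $\omega_1,\omega_2$ and put $\omega'=\omega_1-\omega_2\in\Gamma(M,\underline{\frg^1})$. The connection $\nabla^F$ is fixed, so the difference of the two operators is $\underline{\kappa}^F\circ(\omega'\cdot\otimes 1)$, and this equals $(\underline{\kappa\circ\sigma})\otimes\Id_F=0$. In particular, independence means that all later pointwise computations may be done in a frame $p$ with $(p^\ast\omega)_{\vert x}=0$, as provided by Lemma~\ref{quatlabel31}.

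\textbf{Step (b): $\delta^2=0$.} In such a frame the first-order terms drop out at $x$. Then
\[
\delta_{F,r}^2 s=\underline{\kappa}^F\circ(1\otimes\underline{\kappa}^F)\bigl(\nabla^2 s\bigr).
\]
The iterated map $\kappa\circ(1\otimes\kappa)$ on $(E^\ast\tenc H)^{\otimes 2}$ wedges the two $E^\ast$ factors and symmetrises the two $H$ factors. It therefore factors through the summand $\Lambda^2E^\ast\tenc\Sym^2H$ of $\Lambda^2(E^\ast\tenc H)$ and annihilates symmetric tensors. Because $\omega$ is torsion-free, the skew part of $\nabla^2 s$ is exactly the curvature acting on $s$. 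So I must show that $\kappa\circ(1\otimes\kappa)$ kills $R^{V}\cdot s$ and $R^F\cdot s$.

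The $F$-part is immediate. $R^F$ takes values in $B_2\cong\underline{\Sym^2E^\ast}$, which is exactly the summand $\Sym^2E^\ast\tenc\Lambda^2H$ discarded by $\kappa\circ(1\otimes\kappa)$.

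The $V^{k,r}$-part is where I expect the main obstacle. Write the curvature of a torsion-free $G$-connection as a section of $\Lambda^2(E^\ast\tenc H)\tenc(\frg\tenr\C)$; by the first Bianchi identity it lies in the kernel of the alternation map into $\Lambda^3(E^\ast\tenc H)\tenc(E\tenc H)$. I would then decompose this kernel with the same Schur-type bookkeeping used above for $\frg^1$. The aim is to show that the $\Lambda^2E^\ast\tenc\Sym^2H$-component of $R$ takes values only in the $\mathfrak{sp}(1)$ and trace parts of $\frg$, in a fixed combination. For such a combination, the action on $\Lambda^kE^\ast\tenc\Sym^{k+r}H\tenc\phi^r$ followed by $\kappa\circ(1\otimes\kappa)$ should cancel exactly, and this cancellation is precisely where the weight $\phi^r$ with exponent $\frac{r}{2(n+1)}$ enters.

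As with $\beta$ in Lemma~\ref{quatlabel5}, it suffices by Schur's lemma to test one explicit generator of each irreducible Bianchi-compatible summand on a highest-weight vector. If this computation becomes unmanageable, my fallback is to prove $\delta^2=0$ through the twistor space: identify $\Gamma(M,V^{k,r}\tenc F)$ with the fibrewise-holomorphic part of $(0,k)$-forms on $Z$ with values in $\mathcal{L}^{\pm(\cdot)}\tenc\pi_Z^\ast F$, under which $\delta_{F,r}$ corresponds to $\dolb$ and $\dolb^2=0$.

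\textbf{Step (c): ellipticity.} The principal symbol at $0\neq\xi\in T^\ast_xM$ is $v\mapsto\kappa(\xi\otimes v)$, and I need exactness of the resulting sequence of symbol maps.

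Since $\xi$ is real, it is $J_r\otimes J_r$-invariant, so with a suitable standard basis $h,\widetilde h$ of $H$ it has the form $\xi=\eta\otimes h+J_r\eta\otimes\widetilde h$. Here $\eta$ and $J_r\eta$ are linearly independent; this uses Remark~\ref{quatlabel30}. Expanding $\Sym^{k+r}H$ in monomials $h^a\widetilde h^{\,b}$, the symbol complex becomes a bigraded Koszul-type complex for exterior multiplication by $\eta$ and $J_r\eta$, coupled with the raising operators $h\cdot$ and $\widetilde h\cdot$.

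I would then prove exactness by a filtration or spectral-sequence argument using exactness of the Koszul complex of a nonzero covector. The alternative is to identify this complex with the $\dolb$-symbol on $Z$ along the fibre $\mathbf{P}H$, evaluated on the horizontal lift of $\xi$, where exactness is classical. Twisting by $F$ tensors the symbol complex with $\Id_F$, so ellipticity of the twisted complex follows at once.
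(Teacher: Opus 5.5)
The gap is in the primary route of your Step (b). Its guiding claim is false: the $\Lambda^2E^\ast\tenc\Sym^2H$-component of the curvature $R$ of a torsion-free connection does not take values only in the $\mathfrak{sp}(1)$- and trace parts of $\frg$.

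To see this, replace $\nabla$ by $\nabla+S(\Upsilon)$, where $S\colon T^\ast M\to\underline{\frg^1}$ is an isomorphism as in Lemma~\ref{quatlabel5} and $\Upsilon_x=0$. The new connection is again torsion-free. Its curvature at $x$ differs from $R_x$ by $(X,Y)\mapsto S(\nabla_X\Upsilon)(Y)-S(\nabla_Y\Upsilon)(X)$, and $(\nabla\Upsilon)_x\in T^\ast_xM\otimes T^\ast_xM$ is arbitrary.

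By $\C$-linearity it suffices to take $(\nabla\Upsilon)_x=\sigma\otimes\sigma_H$ with $\sigma\in\Sym^2E^\ast$ and $\sigma_H\in\Sym^2H\cong\Sym^2H^\ast$. Up to a non-zero constant, the $\Lambda^2E^\ast\tenc\Sym^2H$-part of the change is $(e\otimes h,e'\otimes h')\mapsto\sigma_H(h,h')\bigl(e\otimes\sigma(e',\cdot)-e'\otimes\sigma(e,\cdot)\bigr)$. This is a non-zero traceless endomorphism of $E$, i.e.\ it is $\mathfrak{sl}(n,\mathbb H)$-valued. The $\mathfrak{sp}(1)$-part of the change has form type $\Sym^2E^\ast\tenc\Lambda^2H$ instead. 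The $\mathfrak{sl}(n,\mathbb H)$-valued term is killed by $\kappa\circ(1\otimes\kappa)$ only because $\sum_j e^j\wedge\sigma(e_j,\cdot)=0$.

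Similarly, a $\Lambda^2E^\ast\tenc\Sym^2H$-component of $\nabla\Upsilon$ produces $\mathfrak{sl}(n,\mathbb H)$-, $\mathfrak{sp}(1)$- and trace-valued terms. Their contributions to $\kappa\circ(1\otimes\kappa)(R\cdot s)$, $s\in V^{k,r}$, are in general individually non-zero. They cancel only in sum, and only for the exponent $\frac{r}{2(n+1)}$. A Schur test on a fixed trace/$\mathfrak{sp}(1)$ combination would therefore test the wrong object.

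There is a second problem for $n=1$. There the Bianchi identity does not remove the $\Sym^4H$-part of the $\mathfrak{sp}(1)$-curvature. That part contributes to $\delta_r^2$ on $V^{0,r}$ for $r\geq2$, and it vanishes only by the self-duality hypothesis, which your plan never invokes.

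Your fallback does close the gap, and it is what the paper's own material supplies; the paper gives no proof of this proposition beyond citing \cite{salamonPhD}. By Lemma~\ref{quatlabel11}, $\beta_{F,r}\mu_{F,r}(\delta_{F,r}^2v)=\dolb^2\,\beta_{F,r}\mu_{F,r}(v)=0$, and $\beta_{F,r}\circ\mu_{F,r}$ is injective. The proof of that lemma is pointwise and does not presuppose $\delta^2=0$. The $B_2$-condition, and for $n=1$ self-duality through the integrability of $Z$, enter via the holomorphy of $\mathcal L^r\tenc\pi_Z^\ast F$.

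Your Step (a) is correct and is the argument behind the independence claim. For $s=k$, $t=k+r$ one has $t-s=r$, so the preceding proposition (\cite[Proposition~3.2]{salamonPhD}) and \cite[Proposition~5.1]{salamonDGQM} apply. The term $\underline{\kappa}^F\circ(1\otimes\nabla^F)$ does not involve $\omega$.

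Step (c) is also correct and needs no spectral sequence. Put $U:=\spanvector{\eta,J_r\eta}$ and $E^\ast=U\oplus E'$. The symbol complex is then $\Lambda^\bullet E'$ tensored with the strands $\Sym^mH\to U\tenc\Sym^{m+1}H\to\Lambda^2U\tenc\Sym^{m+2}H$. Up to the non-zero factors $\frac{1}{t+1}$, these strands are the Koszul complex of the regular sequence $(h,\widetilde h)$ in $\C[h,\widetilde h]$ in degree $m+2\geq 2$, hence exact.
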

\begin{remark}
    We have $\delta_{F,r}=\delta_r\tenc 1 + \underline{\kappa}^F\circ (1\otimes \nabla^F)$.
\end{remark}
\begin{remark}
    \label{quatlabel9}
    If $F$ is the trivial complex line bundle and $r=0$, then the Salamon complex is a quotient complex of the de Rham complex, since $\underline{\kappa}\circ \nabla$ is the de Rham operator followed by a projection onto a subspace (see Remark~\ref{quatlabel8}). We have $V^{k,0}_x=\sum\limits_{J_x\in Z_x} \Lambda^{k,0}T_x^\ast M_{J_x}\oplus \Lambda^{0,k}T_x^\ast M_{J_x}$ and its complementary space in $\Lambda^k (T^\ast M\tenr \C)$ is given by $\bigcap\limits_{J_x\in Z_x} \Lambda^{k-1,1}T_x^\ast M_{J_x}\oplus\ldots\oplus\Lambda^{1,k-1}T_x^\ast M_{J_x}$ according to \cite{salamonDGQM}.
\end{remark}
\begin{definition}
    We denote the cohomology of the Salamon complex for the differential operator $\delta_{F,r}$ by
    \begin{align*}
        H^{k}_{\delta_{F,r}}(M) := \dfrac{\kernel\left({\delta_{F,r}}_{\vert \Gamma (M,V^{k,r}\otimes_\C F)}\right)}{\image\left( {\delta_{F,r}}_{\vert \Gamma (M,V^{k-1,r}\otimes_\C F)}\right)}.
    \end{align*}
\end{definition}

\subsection{Relation of  the Salamon Complex and the Dolbeault Complex} \label{quatsect3.5}
Let $r\in 2\cdot\N_0$ and $\pi_F\colon F\rightarrow M$ be a $B_2$-vector bundle. We now repeat the results and proofs from \cite[Section~1.6]{salamonPhD} for the general case, since in \cite{salamonPhD} it was only proven for the special case of $r=0$ and the trivial line bundle $F=M\times \C$. The main result is that there is a canonical embedding $\Gamma(M,V^{k,r}\tenc F)\hookrightarrow\mathfrak{A}^{0,k} (Z,\mathcal{L}^r\tenc \pi_Z^\ast F)$ such that $\delta_{F,r}$ is induced by the Dolbeault operator $\dolb$ on $\mathfrak{A}^{0,\bullet}(Z,\mathcal{L}^r\tenc \pi_Z^\ast F)$. The embedding induces an isomorphism between the cohomologies $H^{\bullet}_{\delta_{F,r}}(M)$ and $H^{\bullet}_{\dolb}(Z,\mathcal{L}^r\tenc \pi_Z^\ast F)$.\\

In the following section, we define all the necessary maps and spaces in order to obtain the commutative diagram in Figure~\ref{quatlabel10}.
\begin{figure}[htbp]
  \centering
   \begin{tikzcd}[row sep=large, column sep = small]
    0\arrow[d] & & 0\arrow [d] & \\
    \Gamma (M, V^{0,r}\otimes F)\arrow{d}{\delta_{F,r}}\arrow{r}{\mu_{F,r}} & \Gamma (Z, W^{0,r}\otimes \pi_Z^\ast F)\arrow[swap, near start]{drr}{\dolb^V}\arrow{r}{\beta_{F,r}} & \mathfrak{A}^{0,0}(Z,\mathcal{L}^r\otimes\pi_Z^\ast F)\arrow[near start, crossing over]{d}{\dolb} &\\
    \Gamma (M, V^{1,r} \otimes F)\arrow{d}{\delta_{F,r}}\arrow{r}{\mu_{F,r}} & \Gamma (Z, W^{1,r}\otimes \pi_Z^\ast F)\arrow{r}{\beta_{F,r}} \arrow[swap,near start]{drr}{\dolb^V} & \mathfrak{A}^{0,1}(Z,\mathcal{L}^r\otimes\pi_Z^\ast F)\arrow[swap]{r}{\proj^V} \arrow[near start, crossing over]{d}{\dolb} & \mathfrak{A}^{0,1}(Z,\mathcal{L}^r\otimes\pi_Z^\ast F)^V\\
    \textover[c]{\vdots}{$\Gamma (M, V^{1,r}\otimes F)$}\arrow{d}{\delta_{F,r}}\arrow{r}{\mu_{F,r}} & \textover[c]{\vdots}{$\Gamma (Z, W^{1,r}\otimes\pi_Z^\ast F)$}\arrow{r}{\beta_{F,r}}\arrow[swap, near start]{drr}{\dolb^V} & \textover[c]{\vdots}{$\Gamma (Z,\pi_Z^\ast F\otimes W^{1,r})$}\arrow[swap]{r}{\proj^V} \arrow[near start,crossing over]{d}{\dolb} & \textover[c]{\vdots}{$\mathfrak{A}^{0,1}(Z,\mathcal{L}^r\otimes\pi_Z^\ast F)^V$}\\
    \Gamma (M, V^{2n,r}\otimes F)\arrow{d}{\delta_{F,r}}\arrow{r}{\mu_{F,r}} & \Gamma (Z, W^{2n,r}\otimes\pi_Z^\ast F)\arrow{r}{\beta_{F,r}}\arrow[swap, near start]{drr}{\dolb^V} & \mathfrak{A}^{0,2n}(Z,\mathcal{L}^r\otimes\pi_Z^\ast F)\arrow[swap]{r}{\proj^V} \arrow[near start,crossing over]{d}{\dolb} & \mathfrak{A}^{0,2n}(Z,\mathcal{L}^r\otimes\pi_Z^\ast F)^V\\
    0 &  & \mathfrak{A}^{0,2n+1}(Z,\mathcal{L}^r\otimes\pi_Z^\ast F)\arrow{d}{\dolb}\arrow[swap]{r}{\proj^V} & \mathfrak{A}^{0,2n+1}(Z,\mathcal{L}^r\otimes\pi_Z^\ast F)^V\\
    & & 0 &
    \end{tikzcd}
\caption{commutative diagram}
\label{quatlabel10}
\end{figure}
Here, $\beta_{F,r}\circ\mu_{F,r}\colon \Gamma(M,V^{\bullet,r}\tenc F)\hookrightarrow\mathfrak{A}^{0,\bullet}(Z,\mathcal{L}^r\tenc \pi_Z^\ast F)$ is the embedding mentioned above. For the maps $\mu_{F,r}$ and $\beta_{F,r}$, we omit $F$ and $r$ from the index if $F$ is the trivial line bundle or $r=0$, respectively.\\

In the following, we often only need to work locally, so we use the fact that locally, on sufficiently small neighbourhoods $U\subset M$ of a point $x\in M$, there is always a principal bundle $\widetilde{P}$ with structure group $\widetilde{G}=\glxsp{n}$ as a $2$-fold cover of $P$, so that
\[
\begin{tikzcd}
\widetilde{P}\times \widetilde{G} \arrow[r] \arrow[d] & \widetilde{P} \arrow[d,"\zeta"]\arrow[dr]& \\
P_{\vert U}\times G \arrow[r] & P_{\vert U}\arrow[r] &M
\end{tikzcd}
\]
 commutes, since $\widetilde{G}\rightarrow G$ is a $2$-fold cover. The connection on $P$ thus also provides a connection on $\widetilde{P}$. Although we only work locally, the constructions and maps are still canonical, i.e.\ independent of the local trivialisation.\\
Let $U\subset M$ be a neighbourhood of $x\in M$ such that a principal bundle $\widetilde{P}$ as described above exists. Now, every local standard basis of $\widetilde{P}\times_{\widetilde{G}} H$ can be written as $[\widetilde{p},h_1],[\widetilde{p},h_2]$ for a standard basis $h_1 , h_2 := J_r(h_1)$ of $H$ and a local section $\widetilde{p}\colon U\rightarrow\widetilde{P}$.\\
Consider the sections $s_1',s_2'\in \Gamma^{\operatorname{hol}} (\mathbf{P}H,\mathcal{O} (1))$ with $s_{1, [h_1+zh_2]}' ([h_1+zh_2],h_1+zh_2)=1$ and $s_{2, [zh_1+h_2]}' ([zh_1+h_2],zh_1+h_2)=1$. Then $s_{1, [z_1h_1+z_2h_2]}'=\frac{z_1}{z_2}s_{2, [z_1h_1+z_2h_2]}' $ and $\frac{z_2}{z_1}s_{1, [z_1h_1+z_2h_2]}'=s_{2, [z_1h_1+z_2h_2]}'$ for $z_2\neq 0$ and $z_1\neq 0$, respectively. In particular, $s_{1, [h_2]}'=0$ and $s_{2, [h_1]}'=0$.
\subsection*{Construction of $\beta_{F,r}$}
The map
\begin{align*}
    \varepsilon \colon \overline{\mathcal{O}(-1)} &\rightarrow \mathcal{O} (1)\\
    ([z_1h_1+z_2h_2],c(z_1h_1+z_2h_2)) &\mapsto \overline{c} (\overline{z_1}s_{1, [z_1h_1+z_2h_2]}'+\overline{z_2}s_{2, [z_1h_1+z_2h_2]}')
\end{align*}
is a $\Sp (1)$-equivariant isomorphism of vector bundles.
For $z_2\neq 0$, we have
\begin{align*}
    \varepsilon ([z_1h_1+z_2h_2],\frac{z_1}{z_2}h_1+h_2)&=\varepsilon ([z_1h_1+z_2h_2],\frac{1}{z_2}(z_1h_1+z_2h_2)) \\
    &= \frac{1}{\overline{z_2}} (\overline{z_1}s_{1, [z_1h_1+z_2h_2]}'+\overline{z_2}s_{2, [z_1h_1+z_2h_2]}')\\
    &= \frac{\overline{z_1}}{\overline{z_2}}\underbrace{s_{1, [z_1h_1+z_2h_2]}'}_{=\frac{z_1}{z_2}{s'_{2, [z_1h_1+z_2h_2]}}}+s_{2, [z_1h_1+z_2h_2]}' \\
    &= (\frac{z_1\overline{z_1}}{z_2\overline{z_2}}+1)s_{2, [z_1h_1+z_2h_2]}'.
\end{align*}
For $h=z_1h_1+z_2h_2$, we get
\begin{align*}
    \varepsilon^{-1} (s'_{1, [h]})=([h],\frac{\overline{z_1}}{\| h\|^2}h), &&
    \varepsilon^{-1} (s'_{2, [h]})=([h],\frac{\overline{z_2}}{\| h\|^2}h)
\end{align*}
for $\| h\|^2=\vert z_1\vert^2 +\vert z_2\vert^2=z_1\overline{z_1}+z_2\overline{z_2}$ and therefore
\begin{align*}
    J_r(\varepsilon^{-1} (s'_{1, [h]}))=([h],\frac{z_1}{\| h\|^2}J_r(h)), &&
    J_r(\varepsilon^{-1} (s'_{2, [h]}))=([h],\frac{z_2}{\| h\|^2}J_r(h)).
\end{align*}
For $k\in\N_0$, this also yields an isomorphism $\varepsilon_k := \varepsilon^{\otimes k}\colon \overline{\mathcal{O}(-k)}\rightarrow \mathcal{O} (k)$.\\
We define 
\begin{align*}
    s^{\otimes l} s'^{\otimes k-l}:=\underbrace{s\otimes_\C\ldots \otimes_\C s}_{l\text{ times}} \otimes_\C\underbrace{s'\otimes_\C\ldots \otimes_\C s'}_{k-l\text{ times}}
\end{align*}
for $s,s'\in \mathcal{O} (1)$.\\
Then let ${s}_{1, [\zeta (\widetilde{p}),[h]]}:=[\widetilde{p},s'_{1, [h]}]$ and ${s}_{2, [\zeta (\widetilde{p}),[h]]}: =[\widetilde{p},s'_{2, [h]}]$ be the corresponding sections in the local vector bundle $L^1$ existing on $\pi_Z^{-1} (U)\subset Z$.\\
Since $\varepsilon_k \colon \overline{\mathcal{O}(-k)} \rightarrow \mathcal{O} (k)$ is a $\Sp (1)$-equivariant isomorphism, we can associate an isomorphism $\underline{\varepsilon_k} \colon \overline{L^{-k}}\rightarrow L^k$ to it.\\
Pointwise, this gives an isomorphism $(\widetilde{P}\times_{\widetilde{G}} H)_{ y}\cong \Gamma^{\operatorname{hol}} (\pi_Z^{-1} (y),L^1_{ \pi_Z^{-1} (y)})$ by defining
\begin{align*}
    {h}_{1, y}&\mapsto {-s}_{2, \pi_Z^{-1} (y)},\\
    {h}_{2, y}&\mapsto {s}_{1, \pi_Z^{-1} (y)}.
\end{align*}
For the local sections
\begin{align*}
    {t}_{1, [\zeta (\widetilde{p}),[z_1h_1+z_2h_2]]}:=&[\widetilde{p},([z_1h_1+z_2h_2],h_1+\frac{z_2}{z_1}h_2)],\\
    {t}_{2, [\zeta (\widetilde{p}),[z_1h_1+z_2h_2]]}:=&[\widetilde{p},([z_1h_1+z_2h_2],\frac{z_1}{z_2}h_1+h_2)]
\end{align*}
of $L^{-1}$, this implies
\begin{align*}
    \underline{\varepsilon} ({t}_{2, [\zeta (\widetilde{p}),[z_1h_1+z_2h_2]]}) &= \underline{\varepsilon} ([\widetilde{p},([z_1h_1+z_2h_2],\frac{z_1}{z_2}h_1+h_2)]) \\
    &= \underline{\varepsilon} ([\widetilde{p},([z_1h_1+z_2h_2],\frac{1}{z_2}(z_1h_1+z_2h_2))])\\
    &= [\widetilde{p},\frac{1}{\overline{z_2}} (\overline{z_1}s_{1, [z_1h_1+z_2h_2]}'+\overline{z_2}s_{2, [z_1h_1+z_2h_2]}') ] \\
    &= [\widetilde{p},\frac{\overline{z_1}}{\overline{z_2}}\underbrace{s_{1, [z_1h_1+z_2h_2]}'}_{=\frac{z_1}{z_2}s'_{2, [z_1h_1+z_2h_2]}}+s_{2, [z_1h_1+z_2h_2]}' ]\\
    &=[\widetilde{p},(\frac{z_1\overline{z_1}}{z_2\overline{z_2}}+1)s_{2, [z_1h_1+z_2h_2]}']
    = (\frac{z_1\overline{z_1}}{z_2\overline{z_2}}+1){s}_{2, [\zeta (\widetilde{p}),[z_1h_1+z_2h_2]]}.
\end{align*}
We denote by $T^{1,0,H}Z$ and $T^{1,0,V}Z$ the eigenspace of $\mathcal{J}\otimes_\R \Id_\C$ on $T^HZ\otimes_\R\C$ and $T^VZ\otimes_\R\C$, respectively, for the eigenvalue $i$. For $z=[p,[h]]$, we then have
\begin{align*}
    T^{\ast 1,0,H}_zZ:=(T^{1,0,H}_z Z)^\ast \cong \pi_Z^\ast( \lbrace [p,\eta\otimes h]\mid \eta\in E^\ast\rbrace ).
\end{align*}
Thus, $T^{\ast 1,0,H}_zZ$ corresponds exactly to the vector bundle $\pi_Z^\ast (\underline{E^\ast})\otimes L^{-1}\subset \pi_Z^\ast (\underline{E^\ast\otimes_\C H)}$, which therefore exists globally, even though the vector bundles $\pi_Z^\ast (\underline{E^\ast})$ and $L^{-1}$ only exist locally. Here, $\pi_Z^\ast$ only refers to the pullback of vector bundles, while the pullback of forms also allows us to regard $\pi_Z^\ast( \lbrace [p,\eta\otimes h]\mid \eta\in E^\ast\rbrace )$ as a subvector bundle of $(TZ\otimes_R\C)^\ast$.\\
This works as follows:
Let $E_1\subset TZ\otimes_\R\C$ and $E_2\subset TM\otimes_\R\C$ be subvector bundles such that
\begin{align*}
    T\pi_Z \colon E_1&\rightarrow \pi_Z^\ast E_2\\
    v_z &\mapsto (z,\underbrace{T_z \pi_Z (v_z)}_{\in T_{\pi_Z (z)}M})
\end{align*}
is an isomorphism (here: $E_2:=TZ^H\otimes_\R\C$ and $E_1:=TM\otimes_\R\C$.)
Then the pullback of forms is given by
\begin{align*}
    (T\pi_Z)^\ast \colon \underbrace{(\pi_Z^\ast E_2)^\ast}_{=\pi_Z^\ast E_2^\ast} &\rightarrow E_1^\ast \\
    (z,\alpha) &\mapsto \alpha \circ T_z\pi_Z,
\end{align*}
where $\alpha\in T^\ast_{\pi_Z (z)}M$. In particular, this pullback is an isomorphism. We usually work with $\pi_Z^\ast F^\ast$ instead of its image $E_1^\ast=(T\pi_Z)^\ast (\pi_Z^\ast E_2^\ast)$ in $(TZ\otimes_\R\C)^\ast$.\\
Now $0\rightarrow T^{\ast 1,0,H}Z \rightarrow T^{\ast 1,0}Z \rightarrow T^{\ast 1,0,V}Z\rightarrow 0$ is an exact sequence. By taking the $k$\nobreakdash-th exterior power, we also obtain an exact sequence with the isomorphism $T^{\ast 1,0,H}_zZ\cong\pi_Z^\ast( \lbrace [p,\eta\otimes h]\mid \eta\in E^\ast\rbrace )$:
\begin{align*}
    0\rightarrow \underbrace{\pi_Z^\ast (\underline{\Lambda^{k} E^\ast})\otimes_\C L^{-k}}_{\cong\Lambda^{k,0} T^{\ast H} Z} \rightarrow \Lambda^{k,0} T^\ast Z \rightarrow (\underbrace{\pi_Z^\ast (\underline{\Lambda^{k-1} E^\ast})\otimes_\C L^{-(k-1)}}_{\cong\Lambda^{k-1,0} T^{\ast H}Z})\otimes_\C (T^{\ast 1,0,V}Z)\rightarrow 0.
\end{align*}
The last space arises because of $\Lambda^k (V_1^\ast\oplus V_2^\ast) = (\Lambda^k V_1^\ast)\oplus (\Lambda^{k-1} V_1^\ast\otimes V_2^\ast)$ for $\dim V_2=1$, since $T^{\ast 1,0,V}Z$ is one-dimensional. If $k=0$, then let $\pi_Z^\ast (\underline{\Lambda^{k-1} E^\ast})\otimes_\C L^{-(k-1)}$ be the trivial line bundle. We will continue to use this notation throughout the rest of this section.\\
For a vector space with a complex structure, complex conjugation on the second factor in $V\tenr \C$ yields a $\C$-linear isomorphism $\overline{V^{1,0}}\rightarrow V^{0,1}$.\\
In this way, we obtain an exact sequence
\begin{align*}
    0\rightarrow \pi_Z^\ast (\underline{\Lambda^{k} E^\ast})\otimes_\C L^{k}\xrightarrow{\beta}\Lambda^{0,k} T^\ast Z \rightarrow (\pi_Z^\ast (\underline{\Lambda^{k-1} E^\ast})\otimes_\C L^{k-1})\otimes_\C (T^{\ast 0,1}Z^V)\rightarrow 0,
\end{align*}
by complex conjugation on the second factor of $(TZ\otimes_\R\C)^\ast=T^\ast Z\otimes_\R\C$ and with the isomorphism $\overline{L^{-k}}\rightarrow L^k$, where $\pi_Z^\ast (\underline{\Lambda^{k} E^\ast})\otimes_\C L^{k}\cong \Lambda^{0,k} T^\ast Z^H$ (also for $k-1$).\\
Let the map $\beta$ be defined for $\eta\in \Lambda^{k} E^\ast,s\in \mathcal{O} (k)_{[h]}$ by
\begin{align*}
    \beta ([p,([h],\eta\otimes_\C s]):= (T\pi_Z)^\ast (\underbrace{[p,([h],\eta\otimes_\C J_r^{\otimes k}(\varepsilon_k^{-1} (s)))]}_{\in \pi_Z^\ast (\underline{E^\ast\otimes_\C H})\cong \pi_Z^\ast (T^\ast M\otimes_\R\C)}).
\end{align*}
For $[p,([h],\eta\otimes_\C {s_2'}^{\otimes k}_{ [h]})]\in\pi_Z^\ast (\underline{\Lambda^{k} E^\ast})\otimes_\C L^{k}$ for $h=z_1h_1+z_2h_2$ with $z_2\neq 0$, we have
\begin{align*}
    \beta ([p,([h],\eta\otimes_\C {s_2'}^{\otimes k}_{ [h]})])&=(T\pi_Z)^\ast ( [p,([h],\eta\otimes_\C (J_r(\frac{\overline{z_2}}{\| h\|^2}h))^{\otimes k})]) \\
    &= (T\pi_Z)^\ast ( [p,([h],\frac{z_2^k}{\| h\|^{2k}} \eta\otimes_\C (J_r(h))^{\otimes k})]).
\end{align*}
For $r\in 2\cdot\N_0,k\in \N_0$, we define the globally existing vector bundle
\begin{align*}
    W^{k,r}:&=  \pi_Z^\ast (\underline{\Lambda^{k} E^\ast})\otimes_\C L^{k+r}\otimes_\C\pi_Z^\ast (\underline{\phi^r})
    = \pi_Z^\ast (\underline{\Lambda^{k} E^\ast})\otimes_\C L^{k}\otimes_\C\mathcal{L}^r= W^{k,0}\otimes_\C\mathcal{L}^r. 
\end{align*}
Let $\pi_F\colon F\rightarrow M$ be a $B_2$-vector bundle. We then define $\beta_{F,r}:=\beta\otimes_\C \Id_{\mathcal{L}^r}\otimes_\C\Id_{\pi_Z^\ast F}$ on $W^{k,r}  \otimes_\C\pi_Z^\ast F$, so
\begin{align*}
    \beta_{F,r}\colon \pi_Z^\ast (\underline{\Lambda^{k} E^\ast})\otimes_\C L^{k}\otimes_\C\mathcal{L}^r\otimes_\C\pi_Z^\ast F\rightarrow \Lambda^{0,k} T^\ast Z \otimes_\C\mathcal{L}^r\otimes_\C\pi_Z^\ast F.
\end{align*}
For $[p,([h],\eta\otimes_\C ({s'}^{\otimes l}_{ 1,[h]}{s'}^{\otimes (k-l)}_{2, [h]}))]\otimes_\C s\otimes_\C f\in \pi_Z^\ast (\underline{\Lambda^{k} E^\ast})\otimes_\C L^{k}\otimes_\C\mathcal{L}^r\otimes_\C\pi_Z^\ast F$ with $h=z_1h_1+z_2h_2$, this means that
\begin{align*}
    &\beta_{F,r} ([p,([h],\eta\otimes_\C ({s'}^{\otimes l}_{1, [h]}{s'}^{\otimes (k-l)}_{2, [h]}))]\otimes_\C s\otimes_\C f)\\
    =&(T\pi_Z)^\ast ([p,([h],\eta\otimes_\C \frac{z_1^{l}z_2^{k-l}}{\|h\|^{2k}}(J_r(h))^{\otimes k})])\otimes_\C s\otimes_\C f.
\end{align*}
This map now gives us the canonical map
\begin{align*}
    \beta_{F,r}\colon \Gamma (Z, W^{k,r}\otimes_\C\pi_Z^\ast F )\rightarrow \mathfrak{A}^{0,k} (Z,\pi_Z^\ast F\otimes_\C\mathcal{L}^r),
\end{align*}
which we also denote by $\beta_{F,r}$, and it is injective.
\subsection*{Construction of $\proj^V$ and $\dolb^V$}
For every $k\in \N_0$, $k>0$, $\Lambda^{0,k} T^{\ast}Z = (\Lambda^{0,k-1} T^{\ast H}Z\otimes_\C T^{\ast 0,1,V} Z)\oplus (\Lambda^{0,k} T^{\ast H}Z)$, and we denote by
\begin{align*}
\proj^V\colon \Lambda^{0,k} T^{\ast}Z \rightarrow \Lambda^{0,k-1} T^{\ast H}Z\otimes_\C T^{\ast 0,1,V} Z
\end{align*}
the projection onto the vertical component, since the decomposition $TZ=T^VZ\oplus T^HZ$ is compatible with the complex structure.\\
For $k>0$, we define
\begin{align*}
    \mathfrak{A}^{0,k} (Z, \mathcal{L}^r\otimes_\C\pi_Z^\ast F)^V&:= \Gamma (Z, \Lambda^{0,k-1} T^{\ast H}Z\otimes_\C T^{\ast 0,1,V} Z\otimes_\C \mathcal{L}^r\otimes_\C\pi_Z^\ast F),\\
    \mathfrak{A}^{0,k} (Z)^V&:= \Gamma (Z,\Lambda^{0,k-1} T^{\ast H}Z\otimes_\C T^{\ast 0,1,V} Z).
\end{align*}
This yields a maps $\proj^V\colon \mathfrak{A}^{0,k} (Z,\mathcal{L}^r\otimes_\C\pi_Z^\ast F) \rightarrow \mathfrak{A}^{0,k} (Z,\mathcal{L}^r\otimes_\C\pi_Z^\ast F)^V$ and $\proj^V\colon \mathfrak{A}^{0,k} (Z) \rightarrow \mathfrak{A}^{0,k} (Z)^V$, which we also denote by $\proj^V$.\\
For $w\in \Gamma (Z, W^{k,r}\otimes_\C\pi_Z^\ast F)$, we have $w_{\vert Z_y}\in \Gamma (Z_y, (\pi_Z^\ast (\underline{\Lambda^{k} E^\ast}) \otimes_\C L^{k+r}\otimes_\C\pi_Z^\ast (\underline{\phi^r})\otimes_\C\pi_Z^\ast F)_{\vert Z_y})$ for every $y\in M$ and locally
\begin{align*}
    w_{[p,[h]]}=\sum\limits_{(l_1,l_2,l_3,l_4)} q_{l_1}\cdot [p,([h],\eta_{l_2}\otimes_\C ({s'}_{1,[h]}^{\otimes l_3} {s'}_{2,[h]}^{\otimes (k+r-l_3)})\otimes_\C 1)]\otimes_\C \pi_Z^\ast f_{l_4}
\end{align*}
for $q_{l_1}\in C^\infty (\pi_Z^{-1} (U),\C),\eta_{l_2}\in \Lambda^k E^\ast,f_{l_4}\in \Gamma (U,F)$.\\
Now all vector bundle factors in $(\pi_Z^\ast (\underline{\Lambda^{k} E^\ast})\otimes_\C L^{k+r}\otimes_\C\pi_Z^\ast (\underline{\phi^r})\otimes_\C\pi_Z^\ast F)_{\vert Z_y}$ are trivial on $Z_y$ except for $L^{k+r}_{\vert Z_y}$. On the fibre $Z_y$, $L^{k+r}_{\vert Z_y}$ is a holomorphic line bundle by definition of the complex structure, but not on the whole of $Z$, and it corresponds to the holomorphic line bundle $\mathcal{O} (k+r)$. For this reason, we can define an operator
\begin{align*}
    \dolb^V\colon \Gamma (Z, W^{k,r}\otimes_\C\pi_Z^\ast F)\rightarrow \mathfrak{A}^{0,k+1} (Z, \mathcal{L}^r\otimes_\C\pi_Z^\ast F)^V
\end{align*}
fibrewise based on the Dolbeault operator of the fibre $Z_y$. Thus, the following applies for the section $w$ above
\begin{align*}
    &\dolb^V (w)_{ [p,[h]]} \\
    =& \sum\limits_{(l_1,l_2,l_3,l_4)} (\proj^V_{\vert \mathfrak{A}^{0,1} (Z)}\circ \dolb) (q_{l_1})\otimes_\C[p,([h],\eta_{l_2}\otimes_\C ({s'}_{1,[h]}^{\otimes l_3} {s'}_{2,[h]}^{\otimes (k+r-l_3)})\otimes_\C 1)]\otimes_\C\pi_Z^\ast f_{l_4}.
\end{align*}
Note that we regard $[p,([h],\eta_{l_2}\otimes_\C ({s'}_{1,[h]}^{\otimes l_3} {s'}_{2,[h]}^{\otimes (k+r-l_3)})\otimes_\C 1)]\otimes_\C\pi_Z^\ast f_{l_4}\in  W^{k,r}\otimes_\C\pi_Z^\ast F$ as an element of $ \Lambda^{0,k} T^{\ast H} Z\otimes_\C \mathcal{L}^r\otimes_\C\pi_Z^\ast F$ because of the isomorphism $W^{k,r}=  \pi_Z^\ast (\underline{\Lambda^ {k} E^\ast})\otimes_\C L^{k}\otimes_\C\mathcal{L}^r\cong \Lambda^{0,k} T^\ast Z^H\otimes_\C\mathcal{L}^r$.
\begin{remark}
    The map $\dolb^V\colon \Gamma (Z, W^{k,r}\otimes_\C\pi_Z^\ast F)\rightarrow \mathfrak{A}^{0,k+1} (Z, \mathcal{L}^r\otimes_\C\pi_Z^\ast F)^V$ is surjective and $\kernel(\dolb^V)$ consists of all sections that are holomorphic on every fibre.
\end{remark}
\noindent We have $0=H^{0,1}_{\dolb} (\mathbb{P}H,\mathcal{O}(k+r))=\kernel (\dolb_{\vert \mathfrak{A}^{0,1} (\mathbb{P}H,\mathcal{O} (k+r))}) / \image(\dolb_{\vert \mathfrak{A}^{0,0} (\mathbb{P}H,\mathcal{O} (k+r))})$. Therefore, we obtain $\image(\dolb_{\vert \mathfrak{A}^{0,0} (\mathbb{P}H,\mathcal{O} (k+r))}) = \kernel (\dolb_{\vert \mathfrak{A}^{0,1} (\mathbb{P}H,\mathcal{O} (k+r))}) = \mathfrak{A}^{0,1} (\mathbb{P}H,\mathcal{O} (k+r))$ because of $\dim_\C \mathbb{P}H=1$. This implies the claim of the remark.
\subsection*{Construction of $\mu_{F,r}$}
For $0\leq k\leq 2n$, there is a canonical map
\begin{align*}
    \mu_{F,r}\colon \Gamma (M,V^{k,r}\otimes_\C F)\rightarrow \Gamma (Z, W^{k,r}\otimes_\C \pi_Z^\ast F),
\end{align*}
which we define as follows:
Let $v\in \Gamma (M,V^{k,r}\otimes_\C F)$. Locally on $U\subset M$, $v$ is given by
\begin{align*}
    v=\sum\limits_{(l_1,l_2,l_3)}  [p,\eta_{l_1}\otimes_\C \frac{1}{(k+r)!} h_2^{l_2}\cdot (-h_1)^{k+r-l_2}\otimes_\C 1]\otimes_\C f_{l_3}
\end{align*}
for $p\in\Gamma (U,P),\eta_{l_1}\in\Lambda^k E^\ast,f_{l_3}\in \Gamma (U,F)$ and on $\pi_Z^{-1} (U)$ we define
\begin{align*}
    \mu_{F,r} (v)_{\vert z}:= \sum\limits_{(l_1,l_2,l_3)} [p_{ \pi_Z (z)},([h],\eta_{l_1}\otimes_\C{s'}_{1, [h]}^{\otimes l_2} {s'}_{2, [h]}^{\otimes (k+r-l_2)}\otimes_\C 1)] \otimes_\C {f}_{{l_3}, \pi_Z (z)}
\end{align*}
for $z=[p_x,[h]]\in \pi_Z^{-1} (U)$. It is obvious that this definition is well-defined and does not depend on the choice of neighbourhood, and that the map $\mu_{F,r}$ is injective.\\
\hfill\break
The commutativity of the diagram in Figure~\ref{quatlabel10} follows from the following results.
\begin{lemma}
    We have $\dolb^V=\proj^V\circ \dolb\circ \beta_{F,r}$.
\end{lemma}
\begin{proof}
    In \cite[Lemma~6.1]{salamonPhD}, Salamon has already shown this for the case where $r=0$ and $F$ is the trivial complex line bundle. Thus, the following diagram commutes
    \begin{center}
        \begin{tikzcd}
            \Gamma (Z,W^{k,0})\arrow{r}{\beta}\arrow[swap, near start]{drr}{\dolb^V} & \mathfrak{A}^{0,k}(Z) \arrow[near start,crossing over]{d}{\dolb} & \\
            &\mathfrak{A}^{0,k+1}(Z)\arrow[swap]{r}{\proj^V}& \mathfrak{A}^{0,k+1}(Z)^V.
        \end{tikzcd}
    \end{center}
    Let $f,s$ be local holomorphic sections in $\pi_Z^\ast F$ and $\mathcal{L}^r$, respectively, and let $w$ be a local section in $W^{k,0}$ on a neighbourhood of $z\in Z$. Due to the definition of the complex structure, the sections $f,s$ are also holomorphic when restricted to a fibre.\\
    This implies
    \begin{align*}
        &\proj^V\circ \dolb\circ \beta_{F,r} (w\otimes_\C s\otimes_\C f) 
        = \proj^V\circ \dolb\circ (\beta\otimes_\C\Id_{\mathcal{L}^r}\otimes_\C\Id_{\pi_Z^\ast F}) (w\otimes_\C s\otimes_\C f)\\
        =&\proj^V\circ \dolb (\beta (w)\otimes_\C s\otimes_\C f) = (\underbrace{\proj^V\circ \dolb\circ \beta}_{=\dolb^V} (w))\otimes_\C s\otimes_\C f
        = \dolb^V (w\otimes_\C s\otimes_\C f),
    \end{align*}
    whereas the last two transformations follow from the fibrewise holomorphy of $f$ and $s$.
\end{proof}
\begin{lemma}[{\cite[Lemma~6.2]{salamonPhD}}]
    If $F$ is the trivial complex line bundle and $r=0$, then $\beta\circ\mu \colon \Gamma (M,\underline{\Lambda^k E^\ast\otimes_\C\Sym^k H}) \rightarrow \mathfrak{A}^{0,k}(Z)$ is precisely the pullback of complex differential forms with $\pi_Z$ and a subsequent projection onto the $(0,k)$-forms because of $\underline{\Lambda^k E^\ast\otimes_\C\Sym^k H}\subset \Lambda^k (T^\ast M\otimes_\R\C)$.
\end{lemma}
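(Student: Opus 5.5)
The plan is to reduce the lemma to a pointwise statement at a point $z=[p,[h]]\in Z$ and compare the two maps on decomposable local generators. Write a local section of $\underline{\Lambda^k E^\ast\otimes_\C\Sym^k H}$ as a sum of terms $[p,\eta\otimes_\C \frac{1}{k!}h_2^{l}\cdot(-h_1)^{k-l}]$ with $\eta\in\Lambda^kE^\ast$ and $p$ a local section of $P$; more precisely, of $\widetilde{P}$ on a small neighbourhood $U$, so that $h_1,h_2$ make sense. Both sides are $C^\infty(M)$-linear and pointwise, since $\mu$, $\beta$, pullback and projection involve no derivatives. It therefore suffices to check equality on such generators at each $z\in\pi_Z^{-1}(U)$ with $h=z_1h_1+z_2h_2$.

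First I compute the right-hand side. By Remark~\ref{quatlabel30}, the element $[p,\eta\otimes h_2^{l}\cdot(-h_1)^{k-l}]$ is a genuine complex $k$-form on $T_xM$, with $x=\pi_Z(z)$. Its pullback $\pi_Z^\ast$ only sees the horizontal part through $T\pi_Z$. Projecting onto $\Lambda^{0,k}T^\ast_zZ$ is therefore the same as taking the $\Lambda^{0,k}T^\ast_xM_{J_z}$-component and pulling back via $(T\pi_Z)^\ast$. By the identification following (\ref{quatlabel7}), $(T^{0,1}_xM_{J_z})^\ast=\{[p,\eta'\otimes J_r(h)]\}$ and $(T^{1,0})^\ast=\{[p,\eta'\otimes h]\}$. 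I expand each $h_i$ in the basis $h,J_r(h)$ of $H$ (after normalising, $h/\|h\|$ and $J_r(h)/\|h\|$ form a standard basis). The $(0,k)$-part of $\eta\otimes(\text{symmetric product})$ is then the coefficient of $J_r(h)^{\otimes k}$. Concretely, $\omega_{H^\ast}(h,\cdot)$ kills $h$ and is nonzero on $J_r(h)$, so the projection of $h_i$ onto the $J_r(h)$-line is $\omega_{H^\ast}(h,h_i)J_r(h)/\omega_{H^\ast}(h,J_r(h))$. Using $\omega_{H^\ast}(h_1,h_2)=1$ and $\omega_{H^\ast}(h,J_r(h))=\|h\|^2$, this gives $h_2\mapsto \frac{z_1}{\|h\|^2}J_r(h)$ and $-h_1\mapsto\frac{z_2}{\|h\|^2}J_r(h)$, up to the sign convention to be checked. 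The resulting $(0,k)$-component is $\eta\otimes\frac{z_1^{l}z_2^{k-l}}{\|h\|^{2k}}J_r(h)^{\otimes k}$, where the factor $\frac1{k!}$ cancels against $v^k=k!\,v^{\otimes k}$.

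Next I compute the left-hand side. By definition, $\mu$ sends the generator to $[p,([h],\eta\otimes {s'}_{1,[h]}^{\otimes l}{s'}_{2,[h]}^{\otimes(k-l)})]$. The explicit formula for $\beta$ derived above, using $J_r\varepsilon^{-1}(s'_{i,[h]})=([h],\frac{z_i}{\|h\|^2}J_r(h))$, gives $(T\pi_Z)^\ast[p,([h],\eta\otimes\frac{z_1^lz_2^{k-l}}{\|h\|^{2k}}J_r(h)^{\otimes k})]$. This matches the right-hand side.

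The main obstacle is bookkeeping of signs and normalisations. One must verify that the projection of $H$ onto the $J_r(h)$-line really sends $h_2\mapsto z_1J_r(h)/\|h\|^2$ and $-h_1\mapsto z_2J_r(h)/\|h\|^2$. This is exactly why $\mu$ uses $h_2$ and $-h_1$ and why the identification maps $h_1\mapsto -s_2$, $h_2\mapsto s_1$. One must also check that the identification $E\otimes H\cong\R^{4n}\otimes\C$ and the symmetrisation conventions in Remark~\ref{quatlabel8} introduce no extra scalar. I would check these first with $k=1$, computing $J_r(h)=-\overline{z_2}h_1+\overline{z_1}h_2$ explicitly and $\omega_{H^\ast}(h,J_r(h))=|z_1|^2+|z_2|^2$. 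The general $k$ then follows by multiplicativity of the tensor power.
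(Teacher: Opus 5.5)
Your proposal is correct and complete. The paper gives no proof of its own here (it only cites Salamon), but your pointwise check is exactly what its preparatory material is built for: the explicit formula for $\beta_{F,r}$ on ${s'}_{1,[h]}^{\otimes l}{s'}_{2,[h]}^{\otimes(k-l)}$ and the identification $h_1\mapsto -s_2$, $h_2\mapsto s_1$.

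You can drop the hedge ``up to the sign convention to be checked''. The values $\omega_{H^\ast}(h,h_2)=z_1$ and $\omega_{H^\ast}(h,-h_1)=z_2$ follow from bilinearity, antisymmetry and $\omega_{H^\ast}(h_1,h_2)=1$ alone. Your explicit $J_r(h)=-\overline{z_2}h_1+\overline{z_1}h_2$ gives $\omega_{H^\ast}(h,J_r(h))=\|h\|^2$. Hence the composite $H\to\mathcal{O}(1)_{[h]}\xrightarrow{J_r\circ\varepsilon^{-1}}H$ is precisely $h'\mapsto\frac{\omega_{H^\ast}(h,h')}{\|h\|^2}J_r(h)$, which is the projection onto the $(0,1)$-line along $\C h$. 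That identity is the whole content of the lemma.

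No extra scalar can enter from the identification $E\tenc H\cong\R^{4n}\tenr\C$. Both $\beta$ and the inclusion $\Lambda^kE^\ast\tenc\Sym^kH\subset\Lambda^k(E^\ast\tenc H)$ use the same regrouping of tensor factors. The only normalisation that matters is $v^k=k!\,v^{\otimes k}$, which you handled; Remark~\ref{quatlabel8} is not needed.

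For the step ``project onto $(0,k)$ after pulling back equals pull back the $J_z$-$(0,k)$-component'', state the reason explicitly. $T_z\pi_Z$ vanishes on $T^V_zZ$ and intertwines $\mathcal{J}_z|_{T^H_zZ}$ with $J_z$ by construction of $\mathcal{J}$. Therefore $\pi_Z^\ast$ maps $\Lambda^{p,q}T^\ast_xM_{J_z}$ into $\Lambda^{p,q}T^\ast_zZ$, as the paper also notes.
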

\begin{lemma}
\label{quatlabel11}
    We have $\dolb\circ  (\beta_{F,r}\circ \mu_{F,r})=(\beta_{F,r}\circ \mu_{F,r})\circ\delta_{F,r}$ on $\Gamma (M,F\otimes_\C V^{k,r})$.\\
    Since $\beta_{F,r}\circ \mu_{F,r}$ is injective, the differential operator 
    \begin{align*}
        \dolb\colon \mathfrak{A}^{0,k}(Z,\pi_Z^\ast F\otimes \mathcal{L}^r)\rightarrow \mathfrak{A}^{0,k+1}(Z,\pi_Z^\ast F\otimes \mathcal{L}^r)
    \end{align*}
    induces the differential operator $\delta_{F,r}\colon \Gamma (M,F\otimes_\C V^{k,r})\rightarrow \Gamma (M,F\otimes_\C V^{k+1,r})$, i.e.\ we obtain $\delta_{F,r}=(\beta_{F,r}\circ \mu_{F,r})^{-1}\circ\dolb\circ (\beta_{F,r}\circ \mu_{F,r})$.
\end{lemma}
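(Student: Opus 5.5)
The plan is to verify the intertwining identity locally, using the Salamon result for the trivial case and then handling the extra factors $\mathcal{L}^r$ and $\pi_Z^\ast F$ by a Leibniz-type argument, as in the preceding lemma for $\dolb^V$. Both sides are first-order differential operators, and $\beta_{F,r}\circ\mu_{F,r}$ is $C^\infty(M)$-linear in the sense that $\beta_{F,r}\circ\mu_{F,r}(gv)=(\pi_Z^\ast g)\,\beta_{F,r}\circ\mu_{F,r}(v)$. Hence it suffices to check the identity on local sections of the form
\[
v=[p,\eta\otimes \tfrac{1}{(k+r)!}h_2^{l}\cdot(-h_1)^{k+r-l}\otimes 1]\otimes f,
\]
together with the Leibniz rule for functions. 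For $\dolb$ this rule reads $\dolb(\pi_Z^\ast g\,\alpha)=(\dolb\pi_Z^\ast g)\wedge\alpha+\pi_Z^\ast g\,\dolb\alpha$. For $\delta_{F,r}$ the extra term is $\underline{\kappa}(dg\otimes v)$. Comparing these two extra terms is a pointwise algebraic statement: $(0,1)$-projection of $\pi_Z^\ast dg$ wedged with the image of $v$ equals the image of $\kappa(dg\otimes v)$. This follows from Remark~\ref{quatlabel8} together with the explicit formula for $\beta_{F,r}$ on $s_1'^{\otimes l}s_2'^{\otimes(k-l)}$. The factor $\frac{1}{t+1}$ in $\kappa$ must match the $\frac{1}{k+1}$ in the wedge product.

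Next we fix a point $x$ and choose, via Lemma~\ref{quatlabel31}, a local section $p$ of $P$ (lifted to $\widetilde P$) with $(p^\ast\omega)_{\vert x}=0$. We also choose a frame $f$ of $F$ with $(\nabla^F f)_{\vert x}=0$. With these choices:
\begin{itemize}
\item at $x$ we get $\delta_{F,r}v=\delta_r(v_0)\otimes f$, where $v=v_0\otimes f$;
\item $\pi_Z^\ast f$ is a local section of $\pi_Z^\ast F$ whose $\dolb$ vanishes along $Z_x$, since $\dolb$ on $\pi_Z^\ast F$ is the $(0,1)$-part of the pullback connection.
\end{itemize}
Thus the $F$-factor reduces to the case $F$ trivial, as in the previous lemma.

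It remains to treat $\mathcal{L}^r$. We write the image of $v_0$ as $\beta(\mu(\eta\otimes\text{(degree } k\text{ part)}))\otimes \underline{s}$ with $\underline{s}$ a product of the holomorphic sections $\underline{s_1'},\underline{s_2'}$ of $\mathcal{L}^r$ of total degree $r$. This splits $\Sym^{k+r}H$ as $\Sym^kH\otimes\Sym^rH$ up to symmetrisation; by linearity and symmetry we may take the monomial to split. Then $\dolb$ of the product equals $\dolb(\beta\mu(\cdot))\otimes\underline{s}$. By Salamon's case \cite[Lemma~6.2]{salamonPhD}, $\beta\circ\mu$ is pullback followed by $(0,k)$-projection, so this is the image of $\delta_0$ applied to the degree-$(k,k)$ part.

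The \textbf{main obstacle} is this last step. The derivative of the $\phi^r$ factor and the mismatch between $\kappa$ on $\Sym^{k+r}$ (factor $\frac{1}{k+r+1}$) and on $\Sym^{k}$ (factor $\frac{1}{k+1}$) must be reconciled with the symmetrisation. At the chosen point the connection terms vanish, so only the $dv$ term matters. I would verify the identity directly there by computing $\dolb$ of the explicit expression $\frac{z_1^{l}z_2^{k+r-l}}{\|h\|^{2(k+r)}}$-type coefficients in horizontal directions. Alternatively, I would use that both sides are invariant differential operators $V^{k,r}\otimes F\to V^{k+1,r}\otimes F$ with the same symbol up to a scalar, so by Schur's lemma it suffices to compare the scalar on one vector such as $e^1\otimes h^{k+r}$. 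The independence of $\delta_{F,r}$ from the torsion-free connection (Proposition~\ref{quatlabel25}), with the specific weight $\phi^r$, is what makes $\mathcal{L}^r$ (rather than $L^r$) holomorphic and the identity hold. Injectivity of $\beta_{F,r}\circ\mu_{F,r}$ then yields the final formula for $\delta_{F,r}$.
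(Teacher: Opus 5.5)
Your architecture is sound, and it is essentially a reorganisation of the paper's proof. The paper also fixes $z$, takes $p$ with $(p^\ast\omega)_{\vert x}=0$ and $z=[p_x,[h_2]]$, disposes of the form part by Salamon's $r=0$ result and of the $\mathcal{L}^r$ part by holomorphy of $\underline{(-s_2')}^{\otimes r}$, and is then left with one first-order term to compare with $\underline{\kappa}^F$. The only organisational difference is that the paper absorbs the scalar functions into a general $f$, so this term comes from $\dolb(f\circ\pi_Z)=\proj_{0,1}\pi_Z^\ast(\nabla^F f)$ rather than from $\dolb(\pi_Z^\ast g)$. The gap is that you never carry out the one verification that actually constitutes the proof, and you place the difficulty in the wrong step.

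In your step~3 there is nothing to reconcile. Take a frame section $v_0$ with constant coefficients with respect to such a $p$, and write $\beta_{F,r}\mu_{F,r}(v_0\otimes f)=\beta\mu(v_0'')\otimes\underline{s}\otimes\pi_Z^\ast f$ for your splitting. Then $\nabla v_0=0$ at $x$, so $\delta_{F,r}(v_0\otimes f)=0$ at $x$ when $f$ is parallel at $x$. On the other side, $\underline{s}$ is holomorphic, $\dolb(\pi_Z^\ast f)$ vanishes along $Z_x$, and $\dolb\,\beta\mu(v_0'')=\beta\mu(\delta_0 v_0'')$ vanishes along $Z_x$ for the same reason. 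So both sides are zero on $Z_x$, and no $\frac{1}{k+r+1}$ versus $\frac{1}{k+1}$ issue arises there.

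Those factors meet only in your step~1, in the pointwise identity $\proj_{0,1}(\pi_Z^\ast\xi)\wedge\beta_{F,r}\mu_{F,r}(v)=\beta_{F,r}\mu_{F,r}(\underline{\kappa}^F(\xi\otimes v))$ for a $1$-form $\xi$. You assert this follows from Remark~\ref{quatlabel8} and that the factors ``must match''. They do not match: equality holds only after combining them with the factorial normalisations built into $\mu_{F,r}$ and $\beta_{F,r}$, and that computation is the content you have to supply.

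The efficient way to check it is the paper's. Replace $p$ by $p\circ[\Id,\lambda]$ so that $z=[p_x,[h_2]]$. Every component containing $h_2$ then becomes an $s_1'$-factor and vanishes at $z$, both before and after applying $\kappa$. So only $v_0=[p,\eta\otimes h_1^{k+r}\otimes 1]$ matters. For it the left side carries $\frac{(k+r)!}{k!}\cdot\frac{(k+1)!}{k+1}$ and the right side $\frac{(k+r+1)!}{k+r+1}$, both equal to $(k+r)!$.

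Your Schur-lemma alternative does not replace this computation, for three reasons. First, regarding $(\beta_{F,r}\circ\mu_{F,r})^{-1}\circ\dolb\circ(\beta_{F,r}\circ\mu_{F,r})$ as an operator on $\Gamma(M,V^{k,r}\otimes_\C F)$ presupposes that $\dolb$ preserves the image of $\beta_{F,r}\circ\mu_{F,r}$, which is part of the claim. Second, equal symbols determine a first-order operator only up to a zeroth-order term. Third, the scalar Schur leaves undetermined must still be evaluated on a test vector, which is exactly the computation above.
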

\begin{proof}
    In \cite[Lemma~6.3]{salamonPhD}, this was only proven for the case $r=0$ and the trivial complex line bundle $F$. We now want to prove this for the general case as well.\\
    Let $v\in\Gamma (M,F\otimes_\C V^{k,r})$. We must show that for every $z\in Z$, $\dolb\circ  (\beta_{F,r}\circ \mu_{F,r}(v))_{\vert z}=((\beta_{F,r}\circ \mu_{F,r})\circ\delta_{F,r}(v))_{\vert z}$ holds.\\
    Let $z\in Z$ and $x:=\pi_Z (z)$. We choose a local section $p\colon U\rightarrow P$ such that for the connection form $\omega\in\mathfrak{A}^1(P,\frg)$, $(p^\ast \omega)_{\vert x}=0$ holds as in Lemma~\ref{quatlabel31}. Furthermore, let $z=[p_x,[h_2]]$. Otherwise, there exists $\lambda\in\Sp (1)$ such that $\lambda (h_2)=h$ for every $h\in H$ with $\Vert h\Vert=1$. This yields $[p_x,[h]]=[p_x\circ \underbrace{[\Id,\lambda]}_{\in G},[h_2]]$, so in this case, replace $p\colon U\rightarrow P$ with $p\circ [\Id,\lambda] \colon U\rightarrow P$. Then $((p\circ [\Id,\lambda])^\ast \omega)_{\vert x}=0$ also applies, since $[\Id,\lambda]$ is constant (see \cite[Theorem~3.3(1)]{baumeichfeld}).\\
    Now let, without loss of generality, $v= [p,\eta\otimes_\C h_1^{k+r}\otimes_\C 1]\otimes_\C f$ with $f\in\Gamma (U,F),\eta:=\eta_1\wedge\ldots\wedge\eta_k\in \Lambda^kE^\ast$. We will see in the course of the calculation why we can limit ourselves to such sections because sections with $h_2$ as a factor vanish at $z=[p_x,[h_2]]$ due to the definition of the map $\mu_{F,r}$.\\
    For $z'=[p_y,[h]]\in\pi_Z^{-1} (U)$ with $h=z_1h_1+z_2h_2$, we have
    \begin{align*}
        &\beta_{F,r}\circ \mu_{F,r} ([p,\eta\otimes_\C h_1^{k+r}\otimes_\C 1 ]\otimes_\C 
        f)_{ z'}\\
        =& \beta_{F,r}((k+r)![p_{\pi_Z (z')},([h],\eta\otimes_\C{(-s')}_{2, [h]}^{\otimes (k+r)}\otimes_\C 1)]\otimes_\C f\circ \pi_Z (z'))\\
        =&(k+r)!(T_{z'}\pi_Z)^\ast ([p_{\pi_Z ({z'})},([h], \eta\otimes_\C\frac{z_2^k}{\|h\|^{2k}}(-J_r(h))^{\otimes k})]) \\
        &\otimes_\C [p_{\pi_Z (z')},([h],1\otimes_\C{(-s_{2, [h]}')}^{\otimes r})]\otimes_\C f\circ \pi_Z (z')\\
        =&\frac{(k+r)!}{k!}(T_{z'}\pi_Z)^\ast (\underbrace{[p_{\pi_Z ({z'})},([h], \eta\otimes_\C\frac{z_2^k}{\|h\|^{2k}}(-J_r(h))^{k})]}_{=:\alpha_{ z'}}) \otimes_\C \underline{(-s_2')}^{\otimes r}_{ z'}\otimes_\C f\circ \pi_Z (z'),
    \end{align*}
    and it follows
    \begin{align*}
        \dolb(\beta_{F,r}(\mu_{F,r} (v)))_{ z'}=& \frac{(k+r)!}{k!}\dolb((T\pi_Z)^\ast \alpha)_{ z'}\otimes_\C \underline{(-s_2')}^{\otimes r}_{ z'}\otimes_\C f\circ \pi_Z (z')
        \\
        &+ (-1)^k\frac{(k+r)!}{k!}((T_{z'}\pi_Z)^\ast \alpha)\wedge  (\underline{(-s_2')}^{\otimes r}_{ z'}\otimes \dolb (f\circ\pi_Z)_{ z'}).
    \end{align*}
    We have $\dolb((T\pi_Z)^\ast \alpha)_{\vert z}=\proj_{0,k+1}\circ (T_z\pi_Z)^\ast (d([p,\eta\otimes_\C h_1^{k}]))_{\vert z}=0$ because of \cite[Equation~(16)]{salamonPhD}. 
    According to \cite[Theorem~3.3.4]{kkdiffgeoENG}, the de Rham operator on $M$ factors through the torsion-free connection $\nabla$ on $\Lambda^k(T^\ast M\otimes_\R\C)$, which is induced by $\omega$, so it immediately follows that $(\nabla [p,\eta,h_1^{\otimes k}])_{\vert z}=0$. We have $\dolb^{\pi_Z^\ast F} (f\circ \pi_Z) = \proj_{0,1}\circ \nabla^{\pi_Z^\ast F} (f\circ \pi_Z)=\proj_{0,1}\circ \pi_Z^\ast (\nabla^Ff)$, so we denote $\nabla^Ff=\sum_{j} f_j\otimes [p,\eta_{j,1}\otimes_\C h_1+\eta_{j,2}\otimes_\C h_2]$. In summary, we obtain
    \begin{align*}
        &\dolb(\beta_{F,r}(\mu_{F,r} (v)))_{\vert z}
        =(-1)^k\frac{(k+r)!}{k!}((T_z\pi_Z)^\ast \alpha)\wedge  (\underline{(-s_2')}^{\otimes r}_{\vert z}\otimes_\C \dolb (f\circ\pi_Z)_{\vert z})\\
        =&(-1)^k\frac{(k+r)!}{k!}((T_z\pi_Z)^\ast \alpha)\wedge  (\underline{(-s_2')}^{\otimes r}_{\vert z}\\
        &\otimes_\C (\sum\limits_{j}(T_z\pi_Z)^\ast([p_{\pi_Z(z)},([h_2],\eta_{j,1}\otimes h_1)])f_j\circ\pi_Z(z)))\\
        =& \sum\limits_{j} \frac{(k+r)!}{k!}(T_z\pi_Z)^\ast ([p_{\pi_Z(z)},([h_2],\eta_{j,1}\otimes h_1)]\wedge \alpha)\otimes_\C \underline{(-s_2')}^{\otimes r}_{\vert z}\otimes_\C f_j\circ\pi_Z(z)\\
        =& \begin{aligned}[t]
            \sum\limits_{j} &\frac{(k+r)!}{k!}\frac{(k+1)!}{(k+1)}(T_z\pi_Z)^\ast ([p_{\pi_Z(z)},([h_2],(\eta_{j,1}\wedge\eta)\otimes h_1^{\otimes (k+1)})])\\
            &\otimes_\C \underline{(-s_2')}^{\otimes r}_{\vert z}\otimes_\C f_j\circ\pi_Z(z)
        \end{aligned}\\
        =&\sum\limits_{j} (k+r)!(T_z\pi_Z)^\ast ([p_{\pi_Z(z)},([h_2],(\eta_{j,1}\wedge\eta)\otimes h_1^{\otimes (k+1)})])\otimes_\C \underline{(-s_2')}^{\otimes r}_{\vert z}\otimes_\C f_j\circ\pi_Z(z)
    \end{align*}
    due to $\Lambda^{0,1}T^{\ast H} Z =(T_z\pi_Z)^\ast ( \lbrace [p_x,\eta\otimes J_r(h_2)]\mid \eta\in E^\ast\rbrace )$ and $h_1=-J_r(h_2)$.\\
    Therefore, we obtain
    \begin{align*}
        \delta_{F,r}(v)_{\vert x} =& \underline{\kappa}^F(\sum\limits_j [p_x,\eta_{j,1}\otimes_\C h_1+\eta_{j,2}\otimes_\C h_2]\otimes_\C [p_x,\eta\otimes_\C h_1^{k+r}\otimes_\C 1]\otimes {f}_{j, x})\\
        =& \sum\limits_j \frac{1}{k+r+1}[p_x,(\eta_{j,1}\wedge\eta)\otimes_\C h_1^{k+r+1}]\otimes {f}_{j, x} + (\text{terms with } h_2\text{ as a factor}).
    \end{align*}
    This yields
    \begin{align*}
        &\beta_{F,r}(\mu_{F,r}(\delta_{F,r}(v)))_{\vert z}\\
        =& \beta_{F,r}(\sum\limits_j \frac{(k+r+1)!}{k+r+1} [p_{ \pi_Z (z)},([h_2],((\eta_{j,1}\wedge\eta)\otimes_\C (-s_2')^{\otimes (k+r+1)}_{ [h_2]})]\otimes_\C {f_j}\circ \pi_Z (z)\\
        =& \sum\limits_j (k+r)!(T_z\pi_Z)^\ast ([p_{\pi_Z(z)},([h_2],(\eta_{j,1}\wedge\eta)\otimes h_1^{\otimes (k+1)})])\otimes_\C \underline{(-s_2')}^{\otimes r}_{\vert z}\otimes_\C f_j\circ\pi_Z(z),
    \end{align*}
    which proves the assertion.
\end{proof}
\begin{theorem}[{\cite[Theorem~6.2]{salamonPhD}}]
\label{quatlabel12}
    The map
    \begin{align*}
        \beta_{F,r}\circ\mu_{F,r} \colon \Gamma (M,V^{k,r}\otimes_\C F) \rightarrow \mathfrak{A}^{0,k}(Z,\mathcal{L}^r\otimes_\C\pi_Z^\ast F)
    \end{align*}
    induces an isomorphism $\psi \colon H^{k}_{\delta_{F,r}}(M)\rightarrow H^{0,k}_{\dolb}(Z,\mathcal{L}^r\tenc \pi_Z^\ast F)$, and we have
    \begin{align*}
        H^{0,2n+1}_{\dolb}(Z,\mathcal{L}^r\tenc \pi_Z^\ast F)=0.
    \end{align*}
\end{theorem}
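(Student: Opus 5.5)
The plan is to build the isomorphism from the chain maps already in hand (Lemma~\ref{quatlabel11} and the commutative diagram in Figure~\ref{quatlabel10}), using a double-complex / spectral-sequence argument along the fibres of $\pi_Z$. Write $A^k:=\mathfrak{A}^{0,k}(Z,\mathcal{L}^r\tenc\pi_Z^\ast F)$ and $\Phi:=\beta_{F,r}\circ\mu_{F,r}$. By Lemma~\ref{quatlabel11}, $\Phi$ is an injective chain map $(\Gamma(M,V^{\bullet,r}\tenc F),\delta_{F,r})\to(A^\bullet,\dolb)$, so it induces $\psi$. It remains to show that $\psi$ is bijective and that the top cohomology vanishes.

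First I filter $A^\bullet$ by horizontal degree. The splitting $\Lambda^{0,k}T^\ast Z=(\Lambda^{0,k-1}T^{\ast H}Z\tenc T^{\ast 0,1,V}Z)\oplus\Lambda^{0,k}T^{\ast H}Z$ has vertical degree $0$ or $1$, because the fibre is $\mathbf{P}H\cong\mathbf{P}^1$. The summand of vertical degree $0$ is $\beta_{F,r}(\Gamma(Z,W^{k,r}\tenc\pi_Z^\ast F))$. Let $F^p$ be the forms whose horizontal degree is at least $p$; then $\dolb$ preserves $F^p$. The $E_0$-differential is the fibrewise Dolbeault operator, which by the lemma $\dolb^V=\proj^V\circ\dolb\circ\beta_{F,r}$ is $\dolb^V$. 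On each fibre the relevant bundle is $\mathcal{O}(k+r)$ with $k+r\ge0$, so $H^{0,1}(\mathbf{P}^1,\mathcal{O}(k+r))=0$. Hence $E_1$ is concentrated in vertical degree $0$: in horizontal degree $k$ it is $\kernel\dolb^V$, the sections of $W^{k,r}\tenc\pi_Z^\ast F$ that are holomorphic on every fibre.

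Next I identify $E_1$ with $\Gamma(M,V^{k,r}\tenc F)$ via $\mu_{F,r}$. Fibrewise the holomorphic sections of $\mathcal{O}(k+r)$ are spanned by ${s'_1}^{\otimes l}{s'_2}^{\otimes(k+r-l)}$, i.e.\ they form $\Sym^{k+r}H$ by the isomorphism $H_y\cong\Gamma^{\mathrm{hol}}(Z_y,L^1)$ given above. So $\mu_{F,r}$ is onto $\kernel\dolb^V$, and a dimension count shows it is bijective. Since $E_1$ lies in a single row, the spectral sequence degenerates at $E_2$, and the $d_1$-differential is induced by $\dolb$ on $\Phi(\cdot)$. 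By Lemma~\ref{quatlabel11} this differential is $\delta_{F,r}$, so $E_2^{k}=H^k_{\delta_{F,r}}(M)$ and the abutment is $H^{0,k}_{\dolb}$.

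Rather than quote the spectral sequence, I will make the argument concrete with an explicit zigzag, since the filtration has only two steps. For surjectivity of $\psi$, take a $\dolb$-closed $\alpha\in A^k$. Its vertical part can be written as $\dolb^V w$ for some $w$, by the surjectivity of $\dolb^V$. Then $\alpha-\dolb\beta_{F,r}(w)$ is purely horizontal, and closedness of this difference forces it to be fibrewise holomorphic, hence of the form $\Phi(v)$; moreover $\delta_{F,r}v=0$ by injectivity of $\Phi$. Injectivity of $\psi$ follows by the same splitting applied to a primitive $\Phi(v)=\dolb\gamma$. Finally, $H^{0,2n+1}=0$ because $V^{2n+1,r}=0$ ($\Lambda^{2n+1}E^\ast=0$): a closed $(0,2n+1)$-form has no horizontal part, so it equals its vertical part, which is $\dolb^V$-exact and hence $\dolb$-exact modulo a horizontal term that must vanish.

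The main obstacle is the zigzag step, specifically showing that a purely horizontal $\dolb$-closed form is fibrewise holomorphic, i.e.\ that $\proj^V\circ\dolb$ on horizontal forms agrees with $\dolb^V$. This requires the lemma $\dolb^V=\proj^V\circ\dolb\circ\beta_{F,r}$ for all horizontal forms and not merely for images of $\beta_{F,r}$, which holds since every horizontal $(0,k)$-form lies in the image of $\beta_{F,r}$. The fibre-vanishing $H^{0,1}(\mathbf{P}^1,\mathcal{O}(k+r))=0$ for $k+r\ge0$ is what makes the whole argument work.
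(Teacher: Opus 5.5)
Your argument is correct and is essentially the intended proof. The paper omits the proof and defers to Salamon, but it assembles exactly the ingredients you use: the diagram in Figure~\ref{quatlabel10}, surjectivity of $\dolb^V$ with kernel the fibrewise holomorphic sections, the identity $\dolb^V=\proj^V\circ\dolb\circ\beta_{F,r}$, and Lemma~\ref{quatlabel11}. Your explicit zigzag is precisely the diagram chase on that figure, and the two-row spectral sequence is just a repackaging of it.
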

\noindent The proof of this theorem for the general case does not really differ from the case considered in \cite[Theorem~6.1]{salamonPhD}, so we omit the proof of the theorem above.
\section{Fixed Point Theorems}
\label{quatsect4}
Let $M$ be a compact manifold, $E_0,\ldots,E_m$ vector bundles over $M$, and 
\begin{align*}
0\rightarrow\Gamma (M,E_0)\xrightarrow{D_0}\Gamma (M,E_1)\xrightarrow{D_1}\ldots \xrightarrow{D_{m-1}}\Gamma (M,E_m)\rightarrow 0
\end{align*}
an elliptic complex with differential operators $D_0,\ldots D_{m-1}$. For a $C^\infty$-map $\gamma \colon M\rightarrow M$ and vector bundle homomorphisms $\psi_k \colon \gamma^\ast E_k\rightarrow E_k$ for $k\in \lbrace 0,\ldots,m\rbrace$, we now define
\begin{align}
    \label{quatlabel13}
    \tau_k \colon &\Gamma (M,E_k)\stackrel{\gamma^\ast}{\rightarrow}\Gamma (M,\gamma^\ast E_k)\stackrel{\psi_k\circ}{\rightarrow} \Gamma (M,E_k)\notag\\
    &\tau_k (s) (x):= \psi_k (s(\gamma(x))).
\end{align}
If $D_k\circ \tau_k=\tau_{k+1}\circ D_k$ holds for all $k\in\lbrace 0,\ldots,m-1\rbrace$, then we call $\tau_\bullet$ an \textbf{endomorphism of the complex}, which always also provides maps
\begin{align*}
    H^k(\tau)\colon H^k(M,E)\rightarrow H^k(M,E)
\end{align*}
on the cohomology $H^k(M,E)=\dfrac{\kernel(D_k)}{\image (D_{k-1})}$ of the complex. We denote the set of all fixed points of a map $\gamma\colon M\rightarrow M$ by $M^\gamma$.\\
The main result of this article is the application of the Atiyah-Bott fixed point theorem in quaternionic geometry.
\begin{theorem}[{\cite[Theorem~A]{atiyahbott1}}, Atiyah-Bott fixed point theorem]
\label{quatlabel14}
    Let $M$ be a compact manifold, $E_0,\ldots,E_m$ vector bundles over $M$, and
    \begin{align*}
        0\rightarrow\Gamma (M,E_0)\xrightarrow{D_0}\Gamma (M,E_1)\xrightarrow{D_1}\ldots \xrightarrow{D_{m-1}}\Gamma (M,E_m)\rightarrow 0
    \end{align*}
    an elliptic complex with differential operators $D_0,\ldots D_{m-1}$. For a $C^\infty$-map $\gamma \colon M\rightarrow M$ with $\det(1-T_x\gamma)\neq 0$ for all $x\in M^\gamma$ and vector bundle homomorphisms $\psi_k \colon \gamma^\ast E_k\rightarrow E_k$, which provide an endomorphism of the complex as in (\ref{quatlabel13}), we have
    \begin{align*}
        \Tr_\mathrm{s} (H^\cdot (\tau))&=\sum\limits_{k=0}^m (-1)^k \Tr (H^k(\tau))
        = \sum\limits_{x\in M^\gamma} \frac{\sum\limits_{k=0}^m (-1)^k \Tr (\psi_{k, x})}{\vert \det\nolimits_\mathbb{R} (1-T_x \gamma)\vert}=\sum\limits_{x\in M^\gamma} \frac{\Tr_\mathrm{s} (\psi_{\cdot, x})}{\vert \det\nolimits_\mathbb{R} (1-T_x \gamma)\vert}.
    \end{align*}
\end{theorem}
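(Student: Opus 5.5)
The statement is Atiyah and Bott's theorem, quoted from \cite[Theorem~A]{atiyahbott1}. In the paper it is simply invoked, but here is how I would prove it, following the heat-kernel route.

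First I reduce to an analytic supertrace. Fix Hermitian metrics on the $E_k$ and a Riemannian metric on $M$. Form the Laplacians $\Delta_k := D_k^\ast D_k + D_{k-1}D_{k-1}^\ast$, which are elliptic because the complex is elliptic. Hodge theory identifies $H^k(M,E)$ with $\kernel(\Delta_k)$, and under this identification $H^k(\tau)$ becomes $\Pi_k\tau_k\Pi_k$, where $\Pi_k$ is the harmonic projection. Since $\tau_\bullet$ commutes with $D_\bullet$ and $e^{-t\Delta}$ is a chain-homotopy equivalence to $\Pi$, the standard McKean--Singer cancellation applies. On each nonzero eigenspace of $\Delta$, $D$ pairs the eigensections of degree $k$ with those of degree $k+1$, and $\tau$ respects this pairing up to homotopy. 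This gives, for every $t>0$,
\begin{align*}
\Tr_\mathrm{s}(H^\bullet(\tau)) = \sum_{k=0}^m (-1)^k \Tr\left(\tau_k e^{-t\Delta_k}\right).
\end{align*}
To make this rigorous I would use a chain homotopy $h$ with $1-e^{-t\Delta} = Dh+hD$ and cyclicity of the trace against smoothing operators.

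Second I localise. The operator $\tau_k e^{-t\Delta_k}$ has the smooth kernel $\psi_k\, K_t^{(k)}(\gamma(x),y)$, so its trace is $\int_M \tr\big(\psi_{k,x} K^{(k)}_t(\gamma(x),x)\big)\,dx$. As $t\to 0$ the heat kernel is exponentially small off the diagonal, of order $e^{-d(\gamma x,x)^2/4t}$. Hence only neighbourhoods of the fixed points contribute. The fixed points are isolated because $\det(1-T_x\gamma)\neq 0$, and they are finite in number by compactness.

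Third, and this is the main obstacle, I compute the local contribution at a fixed point $x_0$. In normal coordinates $u$ centred at $x_0$, write $\gamma(u)=Au+O(u^2)$ with $A=T_{x_0}\gamma$. The leading heat asymptotics give $K_t(\gamma u,u)\approx (4\pi t)^{-d/2}e^{-|(A-1)u|^2/4t}\,(\Id+O(t^{1/2}))$. The rescaling $u=\sqrt t\,w$ followed by the substitution $w'=(1-A)w$ yields $|\det(1-A)|^{-1}\tr(\psi_{k,x_0})$ in the limit. Here I must check that the error terms, namely the higher-order Taylor terms of $\gamma$ and the subleading heat coefficients, vanish as $t\to0$. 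I also need that the principal symbol of $\Delta_k$ may be taken to be scalar, $|\xi|^2$ times identity, which I would arrange by the choice of metrics. If the symbol is not scalar, I would instead use the Atiyah--Bott parametrix argument. There one writes $1-S=DP+PD$ with $P$ a pseudodifferential parametrix and $S$ smoothing, and computes the trace of $\tau S$ through the distributional trace $\int \psi\,\delta(\gamma x - x)$. The factor $|\det(1-T_x\gamma)|^{-1}$ then arises from the change of variables $x\mapsto x-\gamma(x)$. The symbol terms of $P$ drop out of the alternating sum because of exactness of the symbol sequence.

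Summing the local contributions over $x\in M^\gamma$ and taking the alternating sum over $k$ gives the stated formula.
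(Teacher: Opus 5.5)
The paper gives no proof of this statement. It quotes \cite[Theorem~A]{atiyahbott1}, whose proof works with a parametrix and a distributional trace, which is essentially the route you list as a fallback. Your main route through the heat kernel is a genuinely different and standard strategy: it localises through the $t$-independence of the supertrace and off-diagonal decay, rather than through a parametrix supported near the diagonal. As written, however, it proves less than the statement.

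The step that fails is your claim that the principal symbol of $\Delta_k$ can be made scalar by a choice of metrics. Take the elliptic complex $0\to\Gamma(E_0)\xrightarrow{D}\Gamma(E_1)\to 0$ on the torus $T^2$, with $E_0=E_1=T^2\times\C^2$ and $D=\mathrm{diag}(\partial_1+i\partial_2,\,\partial_1+2i\partial_2)$. A scalar symbol would require Hermitian $P_0,P_1>0$ and a positive definite quadratic form $q$ with $\sigma_D(\xi)^\ast P_1\sigma_D(\xi)=q(\xi)P_0$. Taking determinants gives $(\xi_1^2+\xi_2^2)(\xi_1^2+4\xi_2^2)=c\,q(\xi)^2$ with $c>0$, which unique factorisation rules out. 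In addition, $\Delta_k=D_k^\ast D_k+D_{k-1}D_{k-1}^\ast$ is not even elliptic unless all $D_k$ have the same order, and the statement does not assume this.

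So your Gaussian computation only covers complexes of Laplace type, for example the Dolbeault complex on $Z$ with any Hermitian metric. For the Salamon complex, to which Theorem~\ref{quatlabel33} applies the result directly, this would need a separate check. For same-order complexes you can repair the argument without the fallback. Replace the Minakshisundaram--Pleijel expansion by the symbolic heat parametrix $K_t(y,x)\approx(2\pi)^{-d}\int e^{i(y-x)\cdot\xi}e^{-t\sigma(\Delta_k)(y,\xi)}\,d\xi$. After the substitution $u=x-\gamma(x)$ and rescaling, the limit is still $\Tr(\psi_{k,x_0})/\vert\det\nolimits_\R(1-T_{x_0}\gamma)\vert$, because $\sigma(\Delta_k)(y,0)=0$.

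Your fallback, as stated, misidentifies the key mechanism. Exactness of the symbol sequence is what produces the parametrix $P$, but it is not what makes the $DP+PD$ terms disappear. They cancel because $\tau_kD_{k-1}=D_{k-1}\tau_{k-1}$ and the trace is cyclic, $\Tr(D_{k-1}\tau_{k-1}P_k)=\Tr(\tau_{k-1}P_kD_{k-1})$, so the alternating sum telescopes. Since $\tau(DP+PD)$ is not trace class, this cyclicity must be justified for the distributional trace. That is where transversality of the graph of $\gamma$ to the diagonal enters.

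A smaller point concerns your first step. In $1-e^{-t\Delta}=Dh+hD$ the left-hand side is not trace class, and $h=D^\ast G_\Delta(1-e^{-t\Delta})$, with $G_\Delta$ the Green operator, has order $-1$. So cyclicity against smoothing operators does not apply to it. Use instead $e^{-t\Delta}-\Pi=Dh'+h'D$ with the smoothing operator $h'=D^\ast G_\Delta e^{-t\Delta}$. Also drop the eigenspace-pairing heuristic: $\tau$ commutes with $D$ but not with $D^\ast$, so it does not preserve the eigenspaces of $\Delta$.
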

\subsection{Quaternionic Maps of Manifolds}
\label{quatsect4.1}
We must now find out which maps $\gamma\colon M\rightarrow M$ and $\varphi \colon Z\rightarrow Z$ are suitable for our purposes, in order to obtain an endomorphism of the Salamon complex.
\begin{definition}[{\cite[Definition~2.4]{twistorialmaps}}]
\label{quatlabel26}
    A $C^\infty$-map $\gamma \colon M\rightarrow M$ of a quaternionic manifold is called \textbf{quaternionic} (with respect to $\varphi$) if there exists a $C^\infty$-map $\varphi \colon Z\rightarrow Z$ with $\gamma \circ \pi_Z = \pi_Z \circ \varphi$ such that
    \begin{align*}
        \varphi(J_z) \circ T_{\pi_Z (z)}\gamma = T_{\pi_Z (z)}\gamma \circ J_z
    \end{align*}
    holds for all $z\in Z$.
\end{definition}
\noindent We can now apply the results from Section~\ref{quatsect2.1}. Therefore, we know that for $p_x\in P_x,q_{\gamma (x)}\in P_{\gamma (x)}$, there always exists $[T,\lambda]\in\hnnsp{n}$ with
\begin{align*}
    q_{\gamma (x)}^{-1}\circ T_x\gamma \circ p_x = (v\mapsto Tv\lambda^{-1})
\end{align*}
and $\varphi([p_x,[h]])=[q_{\gamma (x)},[\lambda (h)]]$.\\
For a map $\gamma\colon M\rightarrow M$, we set $M_0:=\lbrace x\in M\mid T_x\gamma =0\rbrace$.
\begin{theorem}[{\cite[Theorem~3.5]{twistorialmaps}}]
\label{quatlabel15}
    Let $\gamma\colon M\rightarrow M$ and $\varphi\colon Z\rightarrow Z$ be $C^\infty$-maps for a quaternionic manifold $M$, such that $\mathring{M_0}=\emptyset$ and $\gamma \circ \pi_Z = \pi_Z \circ \varphi$. Then the following two statements are equivalent:
    \begin{enumerate}
        \item $\gamma\colon M\rightarrow M$ is quaternionic with respect to $\varphi \colon Z\rightarrow Z$.
        \item $\varphi \colon Z\rightarrow Z$ is holomorphic.
    \end{enumerate}
\end{theorem}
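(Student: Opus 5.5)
The statement is \cite[Theorem~3.5]{twistorialmaps}. I would prove it by analysing $T\varphi$ with respect to the splitting $TZ=T^VZ\oplus T^HZ$ induced by a torsion-free connection on $P$. The basic fact used throughout is
\begin{align*}
    T_z\pi_Z\circ\mathcal{J}_z = J_z\circ T_z\pi_Z \quad\text{on all of } T_zZ .
\end{align*}
It holds because $T\pi_Z$ kills $T^VZ$, and on $T^HZ$ it is exactly the definition of $\mathcal{J}$.

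\textbf{(2) $\Rightarrow$ (1).} This direction is a direct computation. Let $\varphi$ be holomorphic, let $z\in Z$ and $X\in T_{\pi_Z(z)}M$, and let $X^H$ be its horizontal lift at $z$. Differentiating $\gamma\circ\pi_Z=\pi_Z\circ\varphi$ gives $T\gamma\circ T\pi_Z=T\pi_Z\circ T\varphi$. Together with holomorphy this yields
\begin{align*}
    T\gamma(J_zX) = T\pi_Z T\varphi(\mathcal{J}X^H) = T\pi_Z\,\mathcal{J}\,T\varphi(X^H) = J_{\varphi(z)}\,T\pi_Z T\varphi(X^H) = J_{\varphi(z)}\,T\gamma(X).
\end{align*}
This is exactly Definition~\ref{quatlabel26}. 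The hypothesis on $M_0$ is not needed here.

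\textbf{(1) $\Rightarrow$ (2).} The condition $T\varphi\circ\mathcal{J}=\mathcal{J}\circ T\varphi$ is closed. Since $\mathring{M_0}=\emptyset$, it therefore suffices to verify it over the dense open set $M\setminus M_0$. At $y\notin M_0$, Lemma~\ref{quatlabel1} gives frames $p_y, q_{\gamma(y)}$ and $[T_y,\lambda_y]$ with $q_{\gamma(y)}^{-1}T_y\gamma\, p_y=(v\mapsto T_yv\lambda_y^{-1})$ and $\varphi([p_y,[h]])=[q_{\gamma(y)},[\lambda_y h]]$. Here $\lambda_y$ is determined up to sign, because $T_y\gamma\neq0$. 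Locally I choose smooth sections $p,q$, with $q^\ast\omega=0$ at $\gamma(x)$ and $p^\ast\omega=0$ at $x$ (Lemma~\ref{quatlabel31}), and a smooth lift $y\mapsto\lambda_y$. I then check $\mathcal{J}$-linearity of $T\varphi$ on vertical and on horizontal vectors separately.
\begin{itemize}
    \item \emph{Vertical vectors.} On each fibre $\varphi$ is the Möbius map $[h]\mapsto[\lambda_y h]$ of $\mathbf{P}H$, which is holomorphic. So vertical vectors are handled.
    \item \emph{Horizontal component of a horizontal vector.} For $X^H$ the horizontal component of $T\varphi(X^H)$ is $(T\gamma X)^H$. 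It is $\mathcal{J}$-linear by the quaternionic condition, via the displayed identity.
    \item \emph{Vertical component of a horizontal vector.} At $z=[p_x,[h]]$ this component equals $[q,[(d\lambda(X)\lambda_x^{-1})\lambda_x h]]$. So it is the infinitesimal action of the $\mathfrak{sp}(1)$-valued $1$-form $A(X):=d\lambda(X)\lambda^{-1}$ at $x$ on $\mathbf{P}H$. One needs $X\mapsto A(X)\cdot[\lambda_x h]$ to be complex-linear from $(T_xM,J_z)$ to $T_{\varphi(z)}\mathbf{P}H$, for every $h$.
\end{itemize}

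The last item is the main obstacle. My plan is to differentiate the identity $\varphi(J)\circ T\gamma=T\gamma\circ J$ covariantly along $X$, using the torsion-free connections at $x$ and $\gamma(x)$. This gives
\begin{align*}
    [A(X),\varphi(J)]\,T\gamma(Y) = (\nabla_X T\gamma)(JY) - \varphi(J)(\nabla_X T\gamma)(Y).
\end{align*}
Since both connections are torsion-free, $\nabla T\gamma$ is symmetric in $X,Y$. Comparing the identity with $X$ and $Y$ interchanged, exactly as in the computation of $\frg^1$ around Lemma~\ref{quatlabel5}, should force $A(J_zX)\equiv \mathcal{J}A(X)$ modulo the stabiliser of $[\lambda_x h]$. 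This is the desired complex-linearity.

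Concretely, I would work in $E\tenc H$. There the condition reduces to showing that $A$, viewed in $E^\ast\tenc H\tenc\Sym^2H$ after factoring through $T_x\gamma\neq0$, has no $E^\ast\tenc\Sym^3H$ component. It is precisely the $\Sym^3H$-part that fails to be $(1,0)$ for every $J_z$. The symmetry of $\nabla T\gamma$ should kill that component by the same Schur-type argument used for $\frg^1$.
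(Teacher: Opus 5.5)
Your proof of (2)$\Rightarrow$(1) is correct. It is exactly the observation the paper records after the statement; the paper gives no proof of the theorem itself and only cites \cite{twistorialmaps}. In (1)$\Rightarrow$(2), the density reduction to $\pi_Z^{-1}(M\setminus M_0)$, the fibrewise holomorphy and the horizontal-to-horizontal part are also fine.

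\textbf{The gap.} The step you flag as the main obstacle is only asserted, and the argument you propose for it does not work under the stated hypothesis. With your frames, symmetry of $\nabla T\gamma$ at $x$ reads
\begin{align*}
    S(u)v-T_x v\,a(u)=S(v)u-T_x u\,a(v),\qquad S:=dT,\quad a:=\lambda_x^{-1}d\lambda ,
\end{align*}
for $u,v\in\mathbb{H}^n$. Only if $T_x$ is invertible can you multiply by $T_x^{-1}$. Then $u\mapsto T_x^{-1}S(u)-R_{a(u)}$ lies in $\frg^1$, and Schur together with Lemma~\ref{quatlabel5} removes the $E^\ast\tenc\Sym^3H$-part of $a$. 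In that case there is even a shorter route: $\gamma$ is a local automorphism of $P$ covered by $\varphi$, $\gamma^\ast\nabla$ is another torsion-free connection, and holomorphy follows from the independence of $\mathcal{J}$ of the connection.

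However, $\mathring{M_0}=\emptyset$ only excludes $T\gamma=0$. $T_x\gamma$ may be nonzero and singular on an open set, and there the solution set of the identity is not a $G$-module; only $\Sp(1)$ and the stabiliser of $T_x$ act on it. So no Schur argument is available, and ``factoring through $T_x\gamma\neq0$'' has no content.

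\textbf{How to fill it.} The missing step can be done by hand. After complexifying, $T_xM\tenr\C\cong E\tenc H$, $S(X)$ acts as $S(X)\otimes 1$, and the $\mathfrak{sp}(1)$-term acts as $T_x\otimes a(X)$. For $z=[p_x,[h]]$, take $X=e\otimes h$ and $Y=e'\otimes h$ in $T^{0,1}_xM_{J_z}$ (see \eqref{quatlabel7}) and reduce the identity modulo $E\tenc\C h$. The $S$-terms drop out, and you get
\begin{align*}
    \alpha(e)\,T_xe'=\alpha(e')\,T_xe ,
\end{align*}
where $\alpha(e)$ is the class of $a(e\otimes h)h$ in $H/\C h\cong\C$.

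If $\alpha\neq0$, then $T_x$ vanishes on the complex hyperplane $\ker\alpha\subset E$. But $T_x$ commutes with $J_r$, so $\ker T_x$ is $J_r$-invariant and has even complex dimension, which forces $T_x=0$. Hence $a(e\otimes h)h\in\C h$ for all $e$ and $h$.

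The $(1,0)$-part of the fundamental vector field of $a(X)$ at $[h]$ is $a(X)h \bmod \C h$, and the factor $(\lambda_x)_\ast$ is $\C$-linear. So this says exactly that $T\varphi$ maps horizontal $(0,1)$-vectors at $z$ to $(0,1)$-vectors at $\varphi(z)$; equivalently, $a$ has no $E^\ast\tenc\Sym^3H$-component. This argument treats invertible and singular $T_x\gamma$ uniformly, and it is here that $T_x\gamma\neq0$, rather than invertibility, suffices.
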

\noindent Due to the construction of the complex structure on $Z$, the holomorphy of $\varphi$ always implies that $\gamma$ is quaternionic, even without the assumption $\mathring{M_0}=\emptyset$.
\begin{remark}
    For a Hyperk\"ahler manifold $M$, i.e.\ $M$ possesses a $\spn{n}$-structure, there exists a globally defined hypercomplex structure $(I,J,K)$ such that $I,J,K\in \Gamma (M,Z)$. In \cite{ChenLiQuaternionicMaps}, a $C^\infty$-map $\gamma\colon M\rightarrow M$ is called quaternionic if there exists an $A\in \SO(3)$ such that 
    \begin{align*}
        A(I)\circ T\gamma = T\gamma \circ I, && A(J)\circ T\gamma = T\gamma \circ J, && A(J)\circ T\gamma = T\gamma \circ J.
    \end{align*}
    Since there are no globally defined hypercomplex structures for general quaternionic manifolds, such as $\mathbf{P}^n\mathbb{H}$, this definition can not be directly extended to quaternionic manifolds. For Hyperk\"ahler manifolds, the main difference between the definitions of quaternionic maps lies in the fact that the definition from \cite{twistorialmaps} permits a variation of $A\in\SO(3)$, which expresses itself precisely in the map $\varphi\colon Z\rightarrow Z$, that is why every quaternionic map following \cite{ChenLiQuaternionicMaps} is, of course, also a quaternionic map following \cite{twistorialmaps}.\\
    For a Hyperk\"ahler manifold $M$, $Z$ can be trivialised with the help of the globally defined hypercomplex structure, i.e.\ we have $Z = M \times S^2$ and $\varphi(x,z) = (\gamma(x), A(z))$ for a constant $A \in \SO(3)$. However, following the definition from \cite{twistorialmaps}, $A\in\Gamma(M,\R^{3\times 3})$ would be possible, provided that $A_x(S^2)\subset S^2$ holds for all $x\in M$.
\end{remark}
\begin{lemma}
\label{quatlabel16}
    Let $\gamma\colon M\rightarrow M$ be a quaternionic map such that $\mathring{M_0}=\emptyset$.\\
    For $x\in M$ and local sections $p\colon U_x\rightarrow P$ and $q\colon U_{\gamma (x)}\rightarrow P$ for neighbourhoods of $x\in U_x$ and $\gamma (x)\in U_{\gamma (x)}$, respectively, there exist $T\in C^\infty (U'_x,\mathbb{H}^{n\times n})$ and $\lambda\in C^\infty (U'_x,\Sp (1))$ for a neighbourhood $x\in U_x'\subset U_x\cap \gamma^{-1} (U_{\gamma (x)})$, such that
    \begin{align*}
        T_y\gamma = q_{\gamma (y)}\circ (v\mapsto T_yv\lambda_y^{-1})\circ p_y^{-1}
    \end{align*}
    holds for all $y\in U_x'$.\\
    In particular, $\varphi ([p_y,[h]])=[q_{\gamma (y)},[\lambda_y (h)]]$ then holds for all $y\in U_x'$ for the $\varphi \colon Z\rightarrow Z$ with respect to which $\gamma$ is quaternionic.
\end{lemma}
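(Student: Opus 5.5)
We need to prove Lemma~\ref{quatlabel16}: local smooth choice of $T_y$, $\lambda_y$ with $T_y\gamma = q_{\gamma(y)}\circ (v\mapsto T_y v\lambda_y^{-1})\circ p_y^{-1}$, and the resulting formula for $\varphi$.

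The plan is to construct $\lambda$ first from $\varphi$, and then $T$ from $T\gamma$ and $\lambda$. Since $\varphi$ covers $\gamma$, in the local trivialisations $\pi_Z^{-1}(U_x)\cong U_x\times \mathbf{P}H$ (via $[p_y,[h]]\mapsto (y,[h])$) and $\pi_Z^{-1}(U_{\gamma(x)})\cong U_{\gamma(x)}\times\mathbf{P}H$ (via $q$), the map $\varphi$ becomes a smooth family $\varphi_y\colon \mathbf{P}H\cong S^2\to \mathbf{P}H$, $y\in U_x\cap\gamma^{-1}(U_{\gamma(x)})$. Here we identify $\mathbf{P}H$ with $Z_\mathbb{H}$, i.e.\ $[h]$ with the $\nu$ having $h$ as $i$-eigenvector.

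Fix $y$ with $T_y\gamma\neq 0$. By Definition~\ref{quatlabel26}, the map $\gamma_y:=q_{\gamma(y)}^{-1}\circ T_y\gamma\circ p_y$ satisfies $\varphi_y(\nu)\circ\gamma_y=\gamma_y\circ\nu$ for all $\nu\in Z_\mathbb{H}$, where $R_\nu$ is written as $\nu$. By Lemma~\ref{quatlabel1}, $\varphi_y$ restricted to $Z_\mathbb{H}$ is then the restriction of an element of $\SO(3)$, namely conjugation by some $\lambda_y\in\Sp(1)$. Thus on the set $U^\ast:=\{y: T_y\gamma\neq0\}$, which is open and, since $\mathring{M_0}=\emptyset$, dense, the map $y\mapsto\varphi_y$ takes values in $\SO(3)\subset C^\infty(S^2,S^2)$. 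Because $\varphi$ is smooth and $\SO(3)$ is closed in $C^0(S^2,S^2)$, density gives $\varphi_y\in\SO(3)$ for all $y$.

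Smoothness of $y\mapsto A_y:=\varphi_y\in\SO(3)$ follows by evaluating at three points of $S^2$: $A_y(i),A_y(j),A_y(k)$ are the columns of the matrix. Shrinking to a contractible (for instance ball-shaped) neighbourhood $U_x'$, we lift $A\colon U_x'\to\SO(3)$ through the double cover $\Sp(1)\to\SO(3)$ to a smooth $\lambda\colon U_x'\to\Sp(1)$. This lift is the main technical point, and it is handled by covering theory on simply connected $U_x'$.

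Next set $T_y v := \gamma_y(v\lambda_y)$, i.e.\ $T_y:=\gamma_y\circ R_{\lambda_y}$. This is smooth in $y$. Since $\varphi_y(\nu)=\lambda_y\nu\lambda_y^{-1}$, the identity $\gamma_y\circ R_\nu = R_{\lambda_y\nu\lambda_y^{-1}}\circ\gamma_y$ holds for all $\nu\in Z_\mathbb{H}$. Using $R_aR_b=R_{ba}$, a short computation gives $T_y\circ R_\nu=R_\nu\circ T_y$ for $\nu\in\{i,j,k\}$. Hence $T_y$ commutes with right quaternion multiplication, and therefore $T_y\in\mathbb{H}^{n\times n}$. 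Consequently $\gamma_y(v)=T_yv\lambda_y^{-1}$, which is the claimed formula.

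For the final statement, $\varphi([p_y,[h]])$ corresponds to $\varphi_y(\nu)=\lambda_y\nu\lambda_y^{-1}$, and its $i$-eigenvector is $\lambda_y(h)$. So $\varphi([p_y,[h]])=[q_{\gamma(y)},[\lambda_y(h)]]$.

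The main obstacle is the extension of $\varphi_y\in\SO(3)$ across $M_0$ together with the smooth lift of $A$ to $\lambda$; the hypothesis $\mathring{M_0}=\emptyset$ is used exactly in the density argument.
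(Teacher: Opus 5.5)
Your proposal is correct and follows essentially the same route as the paper: read off a smooth $A\colon U_x\to\R^{3\times 3}$ from $\varphi$ in the frames $p,q$, use Lemma~\ref{quatlabel1} on the dense set where $T_y\gamma\neq 0$ together with closedness of $\SO(3)$ to get $A_y\in\SO(3)$ everywhere, lift locally through $\Sp(1)\to\SO(3)$, and set $T_y:=\gamma_y\circ R_{\lambda_y}$. Your version is slightly more careful in two places. Because you work with the whole map $\varphi_y\colon Z_\mathbb{H}\to Z_\mathbb{H}$ rather than only its values on $I,J,K$, you also get the final formula for $\varphi$ at points of $M_0$; and you check explicitly that $T_y$ commutes with $R_i,R_j,R_k$, which the paper only asserts.
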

\begin{proof}
    Without loss of generality, we assume $\gamma (U_x)\subset U_{\gamma (x)}$.\\
    Let
    \begin{align*}
        &I:=p\circ R_{-i}\circ p^{-1},J:=p\circ R_{-j}\circ p^{-1},K:=p\circ R_{-k}\circ p^{-1},\\
        &I':=q\circ R_{-i}\circ q^{-1},J':=q\circ R_{-j}\circ q^{-1},K':=q\circ R_{-k}\circ q^{-1}.
    \end{align*}
    Then $\varphi (I),\varphi (J),\varphi (K)$ and $I'\circ \gamma,J'\circ \gamma,K'\circ\gamma$ are sections of $\gamma^\ast Q_{\vert U_x}$.\\
    Since the latter triple is a basis of $\gamma^\ast Q_{\vert U_x}$, $\varphi(I),\varphi(J),\varphi(K)$ can each be written as a linear combination of this basis. We denote the coefficient matrix with respect to this basis by $A\in\Gamma (U_x,\R^{3\times 3})$.\\
    For $y\in M_0^c\cap U_x$, we have $A_y\in \SO (3)$, as seen in Section~\ref{quatsect2}. Since $\SO (3)\subset \R^{3\times 3}$ is a closed subset and $\mathring{M_0}=\emptyset$, it follows from the continuity of $A$ that $A_y\in \SO (3)$ for all $y\in U_x$.\\
    There exists a neighbourhood of $A_x\in\SO (3)$ such that the 2-fold covering $\Sp(1)\rightarrow \SO (3)$ is a diffeomorphism on this neighbourhood. This gives us a neighbourhood $U_x'\subset U_x$ of $x$ on which the section $A\in\Gamma (U_x',\SO (3))$ then gives us a section $\lambda\in\Gamma(U_x',\Sp (1))$.\\
    Now consider $B_y:=q_{\gamma (y)}^{-1}\circ T_y\gamma \circ p_y$ for $y\in U_x'$, then $B\in \Gamma (U_x',\R^{4n\times 4n})$ and it follows that $R_\lambda\circ B=: T\in \Gamma (U_x',\mathbb{H}^{n\times n})$, so $B(v)=Tv\lambda^{-1}$. This proves the desired statement.
\end{proof}
\noindent If we replace the sections $p$ and $q$ with other suitable sections, we obtain the following corollary.
\begin{corollary}
\label{quatlabel17}
Under the conditions of Lemma~\ref{quatlabel16}, one even obtains the following statements.
\begin{enumerate}
    \item There are local sections $p\colon U_x'\rightarrow P, q\colon U_{\gamma (x)}'\rightarrow P$, such that $T_y\gamma = q_{\gamma (y)}\circ (v\mapsto T_yv)\circ p_y^{-1}$ for all $y\in U_x'$, i.e.\ in particular $\varphi ([p_y,[h]])=[q_{\gamma (y)}, [h]]$ for all $y\in U_x'$.
    \item If $x\in M$ is a fixed point of $\gamma$, then there exists a neighbourhood $x\in U_x'$ and a local section $p\colon U_x'\rightarrow P$ such that
    \begin{align*}
        T_y\gamma = p_{\gamma (y)}\circ (v\mapsto T_yv\lambda_y^{-1})\circ p_y^{-1}
    \end{align*}
    for all $y\in U_x'\cap\gamma^{-1} (U_{\gamma (x)}')$ and $\lambda_x=\begin{pmatrix}
        e^{it_x}&0\\0&e^{-it_x}
    \end{pmatrix}$ for some $t_x\in\R$. \label{quatlabel18}
    \item If $T_y\gamma$ is invertible for all $y\in U_x$ on a neighbourhood $U_x$ of $x\in M$, then there exist local sections $p\colon U_x'\rightarrow P$, $q\colon U_{\gamma (x)}'\rightarrow P$ such that $T_y\gamma = q_{\gamma (y)}\circ p_y^{-1}$ for all $y\in U_x'$.
\end{enumerate}
\end{corollary}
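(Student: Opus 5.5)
We prove the three statements of Corollary~\ref{quatlabel17} by modifying the sections $p,q$ from Lemma~\ref{quatlabel16} by smooth maps into $G$ (or $\widetilde G$ locally), using that $p\cdot g$ is again a local section of $P$ for any smooth $g\colon U\to G$. The basic identity is
\begin{align*}
(q\cdot[A,\mu])_{\gamma(y)}^{-1}\circ T_y\gamma\circ (p\cdot[B,\nu])_y = \bigl(v\mapsto A^{-1}T_yB\,v\,\nu^{-1}\lambda_y^{-1}\mu\bigr),
\end{align*}
which follows by composing the local representative $v\mapsto T_yv\lambda_y^{-1}$ from Lemma~\ref{quatlabel16} with the actions of $[B,\nu]$ and $[A,\mu]^{-1}$.

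For (1), we keep $q$ and replace $p$ by $p\cdot[\Id,\lambda^{-1}]$, using the smooth section $\lambda\in C^\infty(U_x',\Sp(1))$ from Lemma~\ref{quatlabel16}. The right factor becomes $\lambda_y\lambda_y^{-1}=1$, so $T_y\gamma=q_{\gamma(y)}\circ(v\mapsto T_yv)\circ p_y^{-1}$. The description of $\varphi$ then follows from the last assertion of Lemma~\ref{quatlabel16} with $\lambda\equiv1$.

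For (3), we first apply (1). Invertibility of $T_y\gamma$ forces $T_y\in\glh{n}$. We then replace $p$ by $p\cdot[T^{-1},1]$, which is smooth because inversion on $\glh{n}$ is smooth. Here a choice is needed: either absorb $T$ on the $p$-side (on $U_x'$) or on the $q$-side, which would require $\gamma$ to be a local diffeomorphism to transport $T$ to $U'_{\gamma(x)}$. We take the $p$-side, since it needs no further hypothesis.

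For (2), where $x=\gamma(x)$, we must use one section on both sides, so we take $q=p$ on a common neighbourhood, shrinking it so that $\gamma(U_x')\subset U_x$ near $x$. Lemma~\ref{quatlabel16} gives $T_y\gamma=p_{\gamma(y)}\circ(v\mapsto T_yv\lambda_y^{-1})\circ p_y^{-1}$, so only the value $\lambda_x$ needs normalising. Replacing $p$ by $p\cdot[\Id,\mu]$ with a \emph{constant} $\mu\in\Sp(1)$ changes $\lambda_x$ to $\mu^{-1}\lambda_x\mu$, since the same $\mu$ appears on both sides at the fixed point. Every unit quaternion is conjugate in $\Sp(1)$ to an element of the maximal torus $\{\operatorname{diag}(e^{it},e^{-it})\}$, and this is a consequence of the eigenvalue remark in Section~\ref{quatsect2.2} together with the fact that $\Sp(1)\cong\SU(2)$ is diagonalisable by unitary conjugation. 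Hence a suitable $\mu$ gives $\lambda_x=\operatorname{diag}(e^{it_x},e^{-it_x})$. The corresponding change of $T$, namely to $\mu^{-1}$ acting on $T$ when $\mu$ is viewed as the scalar element $[\mu\Id,\mu]$ or simply the $\Sp(1)$ factor, is harmless because no constraint is imposed on $T$.

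The main subtlety is in (2): we need the same section on both sides, and we must make sure the conjugation argument does not also require normalising $\lambda_y$ for $y\neq x$, which the statement does not ask for. Since the modification is by a constant element of $G$, smoothness and the form of the representative are automatically preserved.
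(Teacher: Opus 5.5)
Your proposal is correct and follows the same route as the paper, whose proof consists only of the remark that one replaces $p$ and $q$ by suitable other sections. Your modifications $p\cdot[\Id,\lambda^{-1}]$ for (1), $p\cdot[T^{-1},1]$ for (3), and $q=p$ followed by the constant conjugation $p\cdot[\Id,\mu]$ for (2) are exactly the replacements needed. The only blemish is cosmetic: the change $[\Id,\mu]$ leaves the matrix $T$ unchanged, so your hedged remark about how $T$ transforms is unnecessary.
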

For given $\gamma\colon M\rightarrow M$ and $\varphi\colon Z\rightarrow Z$, we now construct the maps for each tensor factor of the spaces, which altogether give us an endomorphism of the Dolbeault complex of $Z$ with values in $\mathcal{L}^r\tenc\pi_Z^\ast F$ or of the Salamon complex.\\
For this purpose, always take $r\in 2\cdot\mathbb{N}_0$.
\subsection*{Construction on the $\mathcal{L}^r$ factor}
Let $r>0$. Furthermore, let $\varphi\colon Z\rightarrow Z$ be holomorphic and $\gamma\colon M\rightarrow M$ be given with $\gamma\circ \pi_Z = \pi_Z \circ \varphi$, such that $T_x\gamma$ is invertible for all $x\in M$. The requirement of invertibility is necessary, as we shall see.\\
For $x\in M$ and $p_x\in P_x,q_{\gamma (x)}\in P_{\gamma (x)}$, there exists $[T,\lambda]\in\glsp{n}$ such that $q_{\gamma (x)}^{-1}\circ T_x\gamma \circ p_x = (v\mapsto Tv\lambda^{-1})$ and $\varphi([p_x,[h]])=[q_{\gamma (x)},[\lambda (h)]]$.\\
On $\varphi^\ast\mathcal{L}^r_{\vert \pi_Z^{-1} (x)}$, we define
\begin{align*}
    \varphi^{\mathcal{L}^r}([p_x,[h]],[q_{\gamma (x)},([\lambda (h)], s'\tenc 1)])
    :=& [p_x,[T,\lambda]^{-1}\cdot ([\lambda (h)],s'\tenc 1)]\\
    =& [p_x,([h],\det\nolimits_\C (T)^{-\frac{r}{2(n+1)}}(s'\circ \lambda^{\otimes r})\tenc 1)].
\end{align*}
This construction is well-defined:\\
Let $p'_x\in P_x,q'_{\gamma (x)}\in P_{\gamma (x)}$, then there exist $[T_1,\lambda_1],[T_2,\lambda_2]\in\glsp{n}$ such that $p'_x=p_x\circ [T_1,\lambda_1], q'_{\gamma (x)}=q_{\gamma (x)}\circ [T_2,\lambda_2]$ and we have
\begin{align*}
    &{q'_{\gamma (x)}}^{-1}\circ T_x\gamma \circ p'_x = [T_2^{-1},\lambda_2^{-1}]\circ q_{\gamma (x)}^{-1} \circ T_x\gamma \circ p_x\circ [T_1,\lambda_1]\\
    =& [T_2^{-1},\lambda_2^{-1}]\circ (v\mapsto Tv\lambda^{-1})\circ [T_1,\lambda_1]
    = (v\mapsto T_2^{-1}TT_1v(\lambda_2^{-1}\lambda\lambda_1)^{-1}).
\end{align*}
The construction is independent of the choice of $p_x$ and $q_{\gamma (x)}$ because of
\begin{align*}
    &\varphi^{\mathcal{L}^r}([p_x,[h]],[q_{\gamma (x)},([\lambda (h)], s'\tenc 1)])\\
    =& \varphi^{\mathcal{L}^r}([p_x',[T_1,\lambda_1]^{-1}\cdot[h]],[q'_{\gamma (x)},[T_2,\lambda_2]^{-1}\cdot ([\lambda (h)], s'\tenc 1)])\\
    =& \varphi^{\mathcal{L}^r}([p_x',[\lambda_1^{-1} (h)]],[q'_{\gamma (x)},([\lambda_2^{-1}\lambda (h)], \det\nolimits_\C (T_2)^{-\frac{r}{2(n+1)}} (s'\circ \lambda_2^{\otimes r})\tenc 1)])\\
    =& [p_x',([\lambda_1^{-1} (h)],\det\nolimits_\C (T_2)^{-\frac{r}{2(n+1)}} \det\nolimits_\C (T_2^{-1}TT_1)^{-\frac{r}{2(n+1)}}(s'\circ (\lambda_2\lambda_2^{-1}\lambda\lambda_1)^{\otimes r})\tenc 1)]\\
    =& [p_x',([\lambda_1^{-1} (h)],\det\nolimits_\C (T)^{-\frac{r}{2(n+1)}} \det\nolimits_\C (T_1^{-1})^{\frac{r}{2(n+1)}}(s'\circ (\lambda\lambda_1)^{\otimes r})\tenc 1)]\\
    =& [p'_x,[T_1,\lambda_1]^{-1}\cdot ([h],\det\nolimits_\C (T)^{-\frac{r}{2(n+1)}} (s'\circ\lambda^{\otimes r})\tenc 1)]\\
    =& [p_x, ([h],\det\nolimits_\C (T)^{-\frac{r}{2(n+1)}} (s'\circ\lambda^{\otimes r})\tenc 1)].
\end{align*}
Altogether, we obtain a map $\varphi^{\mathcal{L}^{r}}\colon \varphi^\ast\mathcal{L}^{r}\rightarrow \mathcal{L}^{r}$, which, due to Lemma~\ref{quatlabel16}, is indeed a $C^\infty$-map.\\
For the holomorphy of the map, we choose, as in Corollary~\ref{quatlabel17}, local sections $p\colon U_x'\rightarrow P$, $q\colon U_{\gamma (x)}'\rightarrow P$ for $x\in M$  with $T_y\gamma =q_{\gamma (y)}\circ p_y^{-1}$ for all $y\in U_x'$.\\
For an element $s'\in\Gamma^{\operatorname{hol}}(\mathbb{P}H,\mathcal{O} (r))$, we consider the local holomorphic section 
\begin{align*}
    s_{[p_y,[h]]}:=([p_y,[h]],[q_{\gamma (y)},([h],s'_{[h]}\otimes_\C 1)])
\end{align*}
of $\varphi^\ast\mathcal{L}^r_{\vert \pi_Z^{-1} (U_x)}$. We have
\begin{align*}
    \varphi^{\mathcal{L}^r}(s)_{[p_y,[h]]}=[p_y,([h],s'_{[h]}\otimes_\C 1)],
\end{align*}
which is also a local holomorphic section of $\mathcal{L}^r$. This makes it easy to see that the map $\varphi^{\mathcal{L}^{r}}$ is in fact holomorphic.\\
We denote by $\tau^{\mathcal{L}^r}$ the map from (\ref{quatlabel13}) with respect to $\varphi^{\mathcal{L}^r}$. We denote by $\tau^Z\colon \mathfrak{A}^{0,k}(Z) \rightarrow \mathfrak{A}^{0,k+1}(Z)$ the pullback of forms with $\varphi$. Due to holomorphy, it follows that $\tau^Z\tenc\tau^{\mathcal{L}^r}$ commutes with the Dolbeault operator.
\subsection*{Construction on $V^{k,r}$}
For a holomorphic map $\varphi\colon Z\rightarrow Z$, the pullback with $\varphi$ of $(0,\bullet)$-forms commutes with the Dolbeault operator. We would now like to use this property for a quaternionic map $\gamma\colon M\rightarrow M$, together with the construction on the $\mathcal{L}^r$-factor, to construct a $\gamma^{V^{k,r}}\colon \gamma^\ast V^{k,r}\rightarrow V^{k,r}$, which is compatible with the Salamon complex.\\
Let $\gamma\colon M\rightarrow M$ be a quaternionic map with respect to $\varphi\colon Z\rightarrow Z$, such that $\mathring{M_0}=\emptyset$. Then, according to Theorem~\ref{quatlabel15}, $\varphi$ is holomorphic. In the case of $r\neq 0$, we additionally require that $T_x\gamma$ is invertible for all $x\in M$. With $\tau^Z\colon \mathfrak{A}^{0,k}(Z)\rightarrow \mathfrak{A}^{0,k+1}(Z)$ as defined above, we thus define $\tau^{V^{k,r}}: =(\beta_r\circ\mu_{r})^{-1}\circ(\tau^Z\otimes \tau^{\mathcal{L}^r})\circ (\beta_r\circ\mu_{r})$ for $r\neq 0$ and $\tau^{V^{k,r}}: =(\beta\circ\mu)^{-1}\circ\tau^Z\circ (\beta\circ\mu)$ for $r=0$. Due to Lemma~\ref{quatlabel11}, compatibility follows from the holomorphy of $\varphi$ and $\varphi^{\mathcal{L}^r}$.\\
It is very easy to verify that $\tau^{V^{k,r}}$ is induced, just as in (\ref{quatlabel13}), by a map $\gamma^{V^{k,r}}$ given by 
\begin{align*}
    &\gamma^{V^{k,r}}((x,[q_{\gamma (x)},(\eta_{1}\wedge\ldots \wedge \eta_{k})\otimes_\C (h_{1}\cdot\ldots\cdot h_{k+r})\otimes c]))\\
    :=&[p_x,(\eta_{1}\circ T_x\wedge\ldots \wedge \eta_{k}\circ T_x)\otimes_\C (\lambda_x^{-1}(h_{1})\cdot\ldots\cdot \lambda_x^{-1}(h_{k+r}))\otimes (\det\nolimits_\C(T_x))^{-\frac{r}{2(n+1)}}c]
\end{align*}
for $T_x\gamma = q_{\gamma (x)}\circ (v\mapsto T_xv\lambda_x^{-1})\circ p_x^{-1}$. For $r=0$, $\tau^{V^{k,0}}$ is precisely the pullback of forms on $M$. For $r\neq 0$, it follows from Corollary~\ref{quatlabel17} that $\gamma^{V^{k,r}}$ is a $C^\infty$-map.\\
For $r=0$, this follows from the fact that $\gamma^{V^{k,0}}$ is the pullback of forms, thus in this case we do not even need $\mathring{M_0}=\emptyset$. We can even use a more general definition of quaternionic maps $\gamma\colon M\rightarrow M$ which only imposes a pointwise condition on $T\gamma$ instead of the definition given in Definition~\ref{quatlabel26}.
\begin{remark}
\label{quatlabel28}
   Our results still apply if we just require $T_x\gamma\colon T_xM\rightarrow T_{\gamma (x)}M$ to be a quaternionic map of vector spaces with respect to their quaternionic structures $Q_x$ and $Q_{\gamma (x)}$ in the sense of Definition~\ref{quatlabel27} for all $x\in M$. In this case, the compatibility can easily be proven with the decomposition given in Remark~\ref{quatlabel9} because the pullback of forms commutes with the de Rham operator, and it preserves the spaces of this decomposition. 
\end{remark}
\subsection*{Construction on $F$ and $\pi_Z^\ast F$}
Let $\pi_F\colon F\rightarrow M$ be a $B_2$-vector bundle with its connection $\nabla^F$. Furthermore, let $\gamma\colon M\rightarrow M$ be a quaternionic map with respect to $\varphi\colon Z\rightarrow Z$. We denote the pullback of forms with $\gamma$ by $\tau^M\colon \mathfrak{A}^k_\C(M)\rightarrow \mathfrak{A}^{k+1}_\C(M)$. For a vector bundle homomorphism $\gamma^F\colon \gamma^\ast F\rightarrow F$, we also write $\tau^F$ for the map from (\ref{quatlabel13}). Similarly, for a vector bundle homomorphism $\varphi^{\pi_Z^\ast F}\colon \varphi^\ast (\pi_Z^\ast F)\rightarrow \pi_Z^\ast F$, we denote the map from (\ref{quatlabel13}) by $\tau^{\pi_Z^\ast F}$.\\
Because of $\gamma \circ \pi_Z = \pi_Z \circ \varphi$, we obtain
\begin{align}
    \varphi^\ast (\pi_Z^\ast F) =& \lbrace (z,(\varphi (z),f))\mid z\in Z, (\varphi (z),f)\in (\pi_Z^\ast F)_{\varphi (z)}\rbrace\nonumber\\
    =& \lbrace (z,(\varphi (z),f)) \mid z\in Z, f\in \underbrace{F_{\pi_Z (\varphi(z))}}_{=F_{\gamma (\pi_Z(z))}} \rbrace\notag\\
    \cong & \lbrace (z,(\pi_Z (z),f)) \mid z\in Z, (\pi_Z (z), f)\in (\gamma^\ast F)_{\pi_Z (z)} \rbrace
    = \pi_Z^\ast (\gamma^\ast F). \label{quatlabel19}
\end{align}
\begin{lemma}
\label{quatlabel20}
    Let $\varphi \colon Z\rightarrow Z$ be holomorphic and $\gamma\colon M\rightarrow M$ with $\gamma\circ \pi_Z = \pi_Z \circ \varphi$, such that a map $\gamma^{V^{k,r}}$ compatible with $\delta_r$ exists as above.\\
    A holomorphic vector bundle homomorphism $\varphi^{\pi_Z^\ast F}\colon \varphi^\ast (\pi_Z^\ast F)\rightarrow \pi_Z^\ast F$ induces a vector bundle homomorphism $\gamma^F\colon \gamma^\ast F\rightarrow F$ that is compatible with $\delta_{F,r}$. In particular, $\varphi^{\pi_Z^\ast F}_{\vert Z_x}$ is constant for all $x\in M$.
\end{lemma}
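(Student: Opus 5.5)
The statement has two parts: a holomorphic $\varphi^{\pi_Z^\ast F}$ descends to some $\gamma^F\colon\gamma^\ast F\to F$, and the induced $\tau^F$ is compatible with $\delta_{F,r}$. I would start with the constancy of $\varphi^{\pi_Z^\ast F}$ on fibres, since the existence of $\gamma^F$ follows directly from it.

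Fix $x\in M$. By (\ref{quatlabel19}), the restriction of $\varphi^\ast(\pi_Z^\ast F)$ to $Z_x$ is canonically $Z_x\times F_{\gamma(x)}$. The restriction of $\pi_Z^\ast F$ to $Z_x$ is $Z_x\times F_x$. Both are trivial holomorphic bundles on the fibre, because the pullback connection is flat along the vertical directions. Hence $\varphi^{\pi_Z^\ast F}_{\vert Z_x}$ is a holomorphic map $Z_x\cong\mathbf{P}^1\to\Hom(F_{\gamma(x)},F_x)$. Since $Z_x$ is compact and connected, Liouville's theorem shows this map is constant.

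Define $\gamma^F_x$ to be this constant value. Then $\gamma^F$ is a well-defined homomorphism $\gamma^\ast F\to F$, and $\varphi^{\pi_Z^\ast F}=\pi_Z^\ast\gamma^F$ under (\ref{quatlabel19}). Smoothness of $\gamma^F$ follows by restricting to a smooth local section of $\pi_Z$. It follows that $\tau^{\pi_Z^\ast F}(\pi_Z^\ast f)=\pi_Z^\ast(\tau^F f)$ for $f\in\Gamma(M,F)$.

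For compatibility, I would show that the map $\beta_{F,r}\circ\mu_{F,r}$ intertwines the two actions:
\begin{align*}
(\beta_{F,r}\circ\mu_{F,r})\circ(\tau^{V^{k,r}}\tenc\tau^F)=(\tau^Z\tenc\tau^{\mathcal{L}^r}\tenc\tau^{\pi_Z^\ast F})\circ(\beta_{F,r}\circ\mu_{F,r}).
\end{align*}
Both sides are tensor products over the three factors. On $V^{k,r}$ the identity holds by the definition of $\tau^{V^{k,r}}$. On $F$ it is the relation just proved, because $\mu_{F,r}$ acts on the $F$-factor by pullback along $\pi_Z$ and $\beta_{F,r}$ is the identity there.

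The operator $\tau^Z\tenc\tau^{\mathcal{L}^r}\tenc\tau^{\pi_Z^\ast F}$ commutes with $\dolb$, since $\varphi$, $\varphi^{\mathcal{L}^r}$ and $\varphi^{\pi_Z^\ast F}$ are all holomorphic. By Lemma~\ref{quatlabel11}, $\delta_{F,r}=(\beta_{F,r}\circ\mu_{F,r})^{-1}\circ\dolb\circ(\beta_{F,r}\circ\mu_{F,r})$, and $\beta_{F,r}\circ\mu_{F,r}$ is injective. Together with the intertwining identity, this gives that $\tau^{V^{\bullet,r}}\tenc\tau^F$ commutes with $\delta_{F,r}$.

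The main obstacle is the intertwining identity, specifically checking that it holds pointwise with the fibre identifications used to define $\mu_{F,r}$. I would verify it on local decomposable sections $[p,\eta\otimes h^{k+r}\otimes 1]\otimes f$, using the local frames from Corollary~\ref{quatlabel17}.
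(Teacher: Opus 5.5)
Your proposal is correct and follows the paper's proof. Holomorphic triviality of both bundles on the compact fibre $Z_x\cong\mathbf{P}^1$ forces $\varphi^{\pi_Z^\ast F}_{\vert Z_x}$ to be constant, which defines $\gamma^F$, and compatibility with $\delta_{F,r}$ then follows from Lemma~\ref{quatlabel11}. You also spell out the intertwining with $\beta_{F,r}\circ\mu_{F,r}$ and the injectivity argument, which the paper leaves implicit; the step you flag as the main obstacle is in fact immediate, since the $V^{k,r}$-factor intertwines by the definition of $\tau^{V^{k,r}}$ and the $F$-factor by $\tau^{\pi_Z^\ast F}\circ\pi_Z^\ast=\pi_Z^\ast\circ\tau^F$.
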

\begin{proof}
    We first note that, due to the complex structure of $Z$ and $\pi_Z^\ast F$, we now have $\varphi^\ast (\pi_Z^\ast F)_{\vert Z_x} \cong Z_x\times  F_{\gamma (x)}$ and $\pi_Z^\ast F_{\vert Z_x} \cong Z_x\times  F_x$ are trivial holomorphic vector bundles on the complex submanifolds $Z_x\subset Z$ for $x\in M$.\\
    If we restrict $\varphi^{\pi_Z^\ast F}$ to the fibre of $Z_x$, then
    \begin{align*}
        \varphi^{\pi_Z^\ast F}_{\vert Z_x}\colon Z_x\times F_{\gamma (x)} \rightarrow Z_x\times F_x 
    \end{align*}
    and we can view $\varphi^{\pi_Z^\ast F}_{\vert Z_x}$ as a holomorphic section in $\Hom_\C(F_{\gamma (x)},F_x)$ which is constant due to the compactness of $Z_x$. So for every $x\in M$, there is a $\gamma^F (x,\cdot):=\gamma^F_x\in \Hom_\C(F_{\gamma (x)},F_x)$ with $\varphi^{\pi_Z^\ast F} (z,(\pi_Z (z),f))=(z,\gamma^F ({\pi_Z (z)},f))$. This gives us a $C^\infty$-map $\gamma^F\colon \gamma^\ast F\rightarrow F$. Compatibility with $\delta_{F,r}$ follows again from Lemma~\ref{quatlabel11}.
\end{proof}
\begin{lemma}
    Let $\gamma \colon M\rightarrow M$ and $\gamma^{V^{\cdot,r}}$ be compatible with $\delta_r$ as above. If a vector bundle homomorphism $\gamma^F\colon \gamma^\ast F\rightarrow F$ now meets
    \begin{align*}
        \nabla^F \circ \tau^F = (\tau^M\tenc \tau^F) \circ \nabla^F
    \end{align*}
    then $\gamma^{V^{\cdot,r }}\tenc \gamma^F \colon \gamma^\ast (V^{\cdot ,r}\tenc F)\rightarrow V^{\cdot ,r}\tenc F$ is compatible with $\delta_{F,r}$.\\
    If, in addition, a holomorphic $\varphi\colon Z\rightarrow Z$ with $\gamma \circ \pi_Z = \pi_Z \circ \varphi$ exists, then $\gamma^F$ induces a holomorphic $\varphi^{\pi_Z^\ast F}\colon \varphi^\ast (\pi_Z^\ast F)\rightarrow \pi_Z^\ast F$ using the identification (\ref{quatlabel19}) by defining
    \begin{align*}
        \varphi^{\pi_Z^\ast F} \colon \varphi^\ast (\pi_Z^\ast F)&\rightarrow \pi_Z^\ast F\\
        (z,(\pi_Z (z),f))&\mapsto (z,\gamma^F(\pi_Z (z),f)).
    \end{align*}
\end{lemma}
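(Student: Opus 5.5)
We begin with the decomposition $\delta_{F,r}=\delta_r\tenc 1+\underline{\kappa}^F\circ(1\tenc\nabla^F)$ from the remark after Proposition~\ref{quatlabel25}. Both sides of the desired identity $\delta_{F,r}\circ(\tau^{V^{\bullet,r}}\tenc\tau^F)=(\tau^{V^{\bullet+1,r}}\tenc\tau^F)\circ\delta_{F,r}$ are local differential operators that satisfy a Leibniz rule in the $F$-factor. It therefore suffices to check the identity on local sections of the form $v\tenc f$ with $v\in\Gamma(U,V^{k,r})$ and $f\in\Gamma(U,F)$.

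\textbf{The $\delta_r$ term.} Here we use the assumption that $\gamma^{V^{\bullet,r}}$ is compatible with $\delta_r$:
\begin{align*}
\delta_r(\tau^{V^{k,r}}v)\tenc\tau^Ff=\tau^{V^{k+1,r}}(\delta_r v)\tenc\tau^F f .
\end{align*}

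\textbf{The $\nabla^F$ term.} Using the hypothesis on $\nabla^F$, we get
\begin{align*}
\underline{\kappa}^F(\tau^{V^{k,r}}v\tenc\nabla^F\tau^Ff)=\underline{\kappa}^F\bigl(\tau^{V^{k,r}}v\tenc(\tau^M\tenc\tau^F)\nabla^Ff\bigr).
\end{align*}
It then remains to prove the pointwise identity
\begin{align*}
\gamma^{V^{k+1,r}}\circ\underline{\kappa}=\underline{\kappa}\circ(\gamma^{T^\ast M\tenr\C}\tenc\gamma^{V^{k,r}}),
\end{align*}
where $\gamma^{T^\ast M\tenr\C}$ is the pullback of complex $1$-forms.

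To check it, write $T_x\gamma=q_{\gamma(x)}\circ(v\mapsto T_xv\lambda_x^{-1})\circ p_x^{-1}$ as in Section~\ref{quatsect4.1}. Under $T^\ast M\tenr\C\cong\underline{E^\ast\tenc H}$, the pullback of $1$-forms acts by $\eta\tenc h\mapsto(\eta\circ T_x)\tenc\lambda_x^{-1}(h)$. The explicit formula for $\gamma^{V^{k,r}}$ applies the same rule factorwise and multiplies the $\phi^r$-factor by $\det_\C(T_x)^{-\frac{r}{2(n+1)}}$. The map $\kappa$ only wedges the $E^\ast$-factors, symmetrises the $H$-factors (with the fixed constant $\frac{1}{k+r+1}$) and is the identity on $\phi^r$. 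Evaluating both sides on basis elements therefore gives the same result, and the determinant factor appears exactly once on each side. This argument is purely algebraic and uses no invertibility of $T_x$, so it also covers the case $r=0$.

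Adding the two terms yields compatibility of $\gamma^{V^{\bullet,r}}\tenc\gamma^F$ with $\delta_{F,r}$. I expect this bookkeeping step, matching the explicit formula for $\gamma^{V^{k,r}}$ with the normalisation of $\kappa$, to be the main (though routine) obstacle.

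\textbf{Holomorphy of $\varphi^{\pi_Z^\ast F}$.} The holomorphic structure of $\pi_Z^\ast F$ is given by $\dolb=\proj_{0,1}\circ\pi_Z^\ast\nabla^F$. Via (\ref{quatlabel19}), on pulled-back sections we have $\tau^{\pi_Z^\ast F}(f\circ\pi_Z)=(\tau^Ff)\circ\pi_Z$, because $\gamma\circ\pi_Z=\pi_Z\circ\varphi$. From $\pi_Z^\ast\circ\tau^M=\tau^Z\circ\pi_Z^\ast$ on forms and the hypothesis on $\nabla^F$ we obtain
\begin{align*}
\pi_Z^\ast\nabla^F\circ\tau^{\pi_Z^\ast F}=(\tau^Z\tenc\tau^{\pi_Z^\ast F})\circ\pi_Z^\ast\nabla^F
\end{align*}
on such sections.

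Since $\varphi$ is holomorphic, $\tau^Z$ preserves types, so it commutes with $\proj_{0,1}$. Hence $\dolb\circ\tau^{\pi_Z^\ast F}=(\tau^Z\tenc\tau^{\pi_Z^\ast F})\circ\dolb$ holds on pulled-back sections. By the Leibniz rule, together with the fact that $\tau^Z$ commutes with $\dolb$ on functions, this extends to all sections, because pulled-back local frames span over $C^\infty(Z)$.

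Consequently, $\varphi^{\pi_Z^\ast F}$ sends local holomorphic sections of $\varphi^\ast(\pi_Z^\ast F)$, namely pullbacks by $\varphi$ of holomorphic sections, to holomorphic sections of $\pi_Z^\ast F$. This is exactly holomorphy of the bundle homomorphism.
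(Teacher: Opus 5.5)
Your proposal is correct and follows the paper's proof. For compatibility you split $\delta_{F,r}=\delta_r\tenc 1+\underline{\kappa}^F\circ(1\tenc\nabla^F)$ and use the compatibility of $\delta_r$ together with the hypothesis on $\nabla^F$; for holomorphy of $\varphi^{\pi_Z^\ast F}$ you establish $\dolb\circ\tau^{\pi_Z^\ast F}=(\tau^Z\tenc\tau^{\pi_Z^\ast F})\circ\dolb$. You also supply two justifications the paper only asserts: the pointwise naturality $\gamma^{V^{k+1,r}}\circ\underline{\kappa}=\underline{\kappa}\circ(\gamma^{T^\ast M\tenr\C}\tenc\gamma^{V^{k,r}})$, via the explicit formula for $\gamma^{V^{k,r}}$, and the derivation of the $\dolb$-commutation from $\dolb=\proj_{0,1}\circ\pi_Z^\ast\nabla^F$, type-preservation of $\varphi^\ast$ and the Leibniz rule.
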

\begin{proof}
\begin{enumerate}
    \item Let $s\tenc f\in \Gamma (M,V^{k,r}\tenc F)$. We have
    \begin{align*}
        &\underline{\kappa}^F (\gamma^{V^{k,r}}(s\circ \gamma)\tenc \nabla^F \gamma^F(f\circ \gamma))= \underline{\kappa}^F (\tau^{V^{k,r}}(s)\tenc \nabla^F \tau^F(f))\\
        =& \underline{\kappa}^F ((\tau^M\tenc \tau^{V^{k,r}}\tenc \tau^F)(s\tenc \nabla^F f))= (\tau^{V^{k+1,r}}\tenc \tau^F)\circ \underline{\kappa}^F (s\tenc \nabla^F f).
    \end{align*}
    This implies
    \begin{align*}
        &\delta_{F,r} \circ (\tau^{V^{k+1,r}}\tenc \tau^F) (s\tenc f) = \delta_{F,r} ((\gamma^{V^{k,r}}\tenc \gamma^F)(s\circ \gamma\tenc f\circ \gamma))\\
        = & (\delta_r\tenc 1 + \underline{\kappa}^F\circ (1\tenc \nabla^F)) (\gamma^{V^{k,r}}(s\circ \gamma)\tenc \gamma^F(f\circ \gamma))\\
        = & \delta_r(\gamma^{V^{k,r}}(s\circ \gamma))\tenc \gamma^F(f\circ \gamma) + \underline{\kappa}^F (\gamma^{V^{k,r}}(s\circ \gamma)\tenc \nabla^F \gamma^F(f\circ \gamma))\\
        = & (\gamma^{V^{k+1,r}} \circ \gamma^\ast (\delta_r(s)))\tenc \gamma^F(f\circ \gamma) + (\tau^{V^{k+1,r}}\tenc \tau^F)\circ \underline{\kappa}^F (s\tenc \nabla^F f)\\
        =& (\tau^{V^{k+1,r}}\tenc \tau^F) (\delta_r(s)\tenc f+ \underline{\kappa}^F (s\tenc \nabla^F f))
        = (\tau^{V^{k+1,r}}\tenc \tau^F) \circ \delta_{F,r}(s\tenc f).
    \end{align*}
    \item The vector bundle homomorphism $\varphi^{\pi_Z^\ast F}$ is holomorphic. Similar to what was done above, the construction of the complex structure on $\pi_Z^\ast F$ implies that $\dolb \circ \tau^{\pi_Z^\ast F} = (\tau^Z\tenc \tau^{\pi_Z^\ast F})\circ \dolb$ holds. This proves the desired statement.
\end{enumerate}
\end{proof}
\subsection{Fixed Point Theorem for Quaternionic Manifolds}
\label{quatsect4.2}
\begin{remark}
    For $r=0$, as explained in Remark~\ref{quatlabel28}, we can apply the more general definition of quaternionic maps in Theorem~\ref{quatlabel33}.
\end{remark}
\begin{proof}[Proof of Theorem~\ref{quatlabel33}]
The conditions of the Atiyah-Bott fixed point theorem~\ref{quatlabel14} are already given here, so we will calculate
\begin{align*}
    \frac{\Tr_\mathrm{s} ((\gamma^{V^{\bullet,r}}\tenc \gamma^F)_{\vert x})}{\vert \det\nolimits_\mathbb{R} (1-T_x \gamma)\vert}
\end{align*}
for all $x\in M^\gamma$.\\
Let $x\in M^\gamma$. Because of $TM\tenr \C\cong \underline{E\tenc H}$, we have 
\begin{align*}
    \vert \det\nolimits_\mathbb{R} (1-T_x \gamma)\vert = \det\nolimits_\C(1-e^{it_x}T_x) \det\nolimits_\C(1-e^{-it_x}T_x).
\end{align*}
Let $\lambda_x:=\begin{pmatrix}
    e^{it_x}&0\\0&e^{-it_x}
\end{pmatrix}$. We need to calculate
\begin{align*}
    \Tr_\mathrm{s} ((\gamma^{V^{\bullet,r}}\tenc \gamma^F)_{\vert x}) =& \sum\limits_{k=0}^{2n} (-1)^k \Tr ((\gamma^{V^{\bullet,r}}\tenc \gamma^F)_{\vert x})\\
    =&\sum\limits_{k=0}^{2n} (-1)^k \det\nolimits_\C (T_x)^{-\frac{r}{2(n+1)}} \Tr \left({T_x^\ast}^{\otimes k}_{\vert \Lambda^k E^\ast}\right) \Tr \left({\lambda_x^{-1}}^{\otimes k+r}_{\vert \symh{k+r}}\right) \Tr (\gamma^F_{\vert x}),
\end{align*}
so we distinguish between the two cases mentioned in the theorem:
\begin{enumerate}
    \item $e^{it_x}\not\in \lbrace -1,1\rbrace$, i.e.\ $t_x\in \R/\pi\cdot\Z$: As in the proof of \cite[Lemma~3.2]{kkquattorsion}, we have 
    \begin{align*}
        \Tr \left({\lambda_x^{-1}}^{\otimes k+r}_{\vert \symh{k+r}}\right)= \frac{e^{i(k+r+1)t_x}-e^{-i(k+r+1)t_x}}{e^{it_x}-e^{-it_x}}
    \end{align*}
    and
    \begin{align*}
        \sum\limits_{k=0}^{2n} (-1)^k\Tr \left({e^{\pm it_x}T_x^\ast}^{\otimes k}_{\vert \Lambda^k E^\ast}\right) = \det\nolimits_\C(1-e^{\pm it_x}T_x)
    \end{align*}
    In summary, this will give us
    \begin{align*}
        &\sum\limits_{k=0}^{2n} (-1)^k \det\nolimits_\C (T_x)^{-\frac{r}{2(n+1)}} \Tr \left({T_x^\ast}^{\otimes k}_{\vert \Lambda^k E^\ast}\right) \Tr \left({\lambda_x^{-1}}^{\otimes k+r}_{\vert \symh{k+r}}\right) \Tr (\gamma^F_{\vert x})\\
        =&  \frac{\Tr (\gamma^F_{\vert x}) \det\nolimits_\C (T_x)^{-\frac{r}{2(n+1)}}}{e^{it_x}-e^{-it_x}}\left(e^{i(r+1)t_x}\det\nolimits_\C(1-e^{it_x}T_x)-e^{-i(r+1)t_x}\det\nolimits_\C(1-e^{-it_x}T_x)\right)\\
        =& \Tr (\gamma^F_{\vert x}) \det\nolimits_\C (T_x)^{-\frac{r}{2(n+1)}}\left(\frac{e^{irt_x}\det\nolimits_\C(1-e^{it_x}T_x)}{1-e^{-2it_x}} + \frac{e^{-irt_x}\det\nolimits_\C(1-e^{-it_x}T_x)}{1-e^{2it_x}}\right).
    \end{align*}
    Thus, we get the desired statement for $\omega(x)$.
    \item $e^{it_x}\in \lbrace -1,1\rbrace$, i.e.\ $t_x\in\pi\cdot\Z$: Let $e^{it_x}=1$, otherwise replace $T_x$ with $-T_x$.\\
    We have $\Tr \left({\lambda_x^{-1}}^{\otimes k+r}_{\vert \symh{k+r}}\right)=\dim_\C(\symh{k+r}) = k+r+1$. With $(-1)^k(k+r+1)=-(-1)^{k+r+1}(k+r+1)$ and \cite[Equation~(27)]{kkhermsymm}, we obtain
    \begin{align*}
        &\sum\limits_{k=0}^{2n} (-1)^k(k+r+1) \Tr \left({T_x^\ast}^{\otimes k}_{\vert \Lambda^k E^\ast}\right)= \frac{\partial}{\partial s}_{\vert s=1} \sum\limits_{k=0}^{2n} -(-s)^{k+r+1} \Tr \left({T_x^\ast}^{\otimes k}_{\vert \Lambda^k E^\ast}\right) \\
        =& \frac{\partial}{\partial s}_{\vert s=1} \det\nolimits_\C(1-sT_x)s^{r+1} = \det\nolimits_\C(1-T_x)(r+1-\Tr ((1-T_x)^{-1}T_x)).
    \end{align*}
    Since $\vert \det\nolimits_\mathbb{R} (1-T_x \gamma)\vert =\det\nolimits_\C(1-T_x)^2$, the desired statement for $\omega(x)$ follows.
\end{enumerate}
\end{proof}
\noindent In the fixed point theorem for $Z$, we will see that the two summands of each fixed point of $\gamma$, which occur in the case $e^{it_x}\not\in \lbrace -1,1\rbrace$ for all $x\in M^\gamma$, each belong to a fixed point on $Z$.
\subsection{Fixed Point Theorem for Twistor Spaces}
\label{quatsect4.3}
\begin{lemma}
\label{quatlabel22}
    Let $\varphi\colon Z\rightarrow Z$ be holomorphic and $\gamma \colon M\rightarrow M$ quaternionic such that $\gamma \circ \pi_Z = \pi_Z \circ \varphi$, so that $T_x\gamma \neq 0$ for all fixed points $x\in M$ of $\gamma$ and $\det_\R(1-T_z\varphi)\neq 0$ for all fixed points $z\in Z$ of $\varphi$. Then the fixed points of $\varphi$ occur in pairs in $\pi_Z^{-1}(M^\gamma)$.
\end{lemma}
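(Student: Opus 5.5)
First, every fixed point of $\varphi$ lies over a fixed point of $\gamma$: if $\varphi(z)=z$ then $\gamma(\pi_Z(z))=\pi_Z(\varphi(z))=\pi_Z(z)$, so $\pi_Z(z)\in M^\gamma$. It therefore suffices to fix $x\in M^\gamma$ and count the fixed points of $\varphi$ on the fibre $Z_x\cong\mathbf{P}H$. By Lemma~\ref{quatlabel16} and Corollary~\hyperref[quatlabel17]{\ref*{quatlabel17}.(\ref*{quatlabel18})}, there is a frame $p_x\in P_x$ with $T_x\gamma=p_x\circ(v\mapsto T_xv\lambda_x^{-1})\circ p_x^{-1}$. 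On the fibre, $\varphi([p_x,[h]])=[p_x,[\lambda_x(h)]]$. Here $\lambda_x\in\Sp(1)$, and since $T_x\gamma\neq 0$ we may take $\lambda_x=\operatorname{diag}(e^{it_x},e^{-it_x})$, because $A_x\in\SO(3)$ is well defined. The restriction $\varphi_{\vert Z_x}$ is thus the Möbius map $[h]\mapsto[\lambda_x h]$ on $\mathbf{P}^1\C$.

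Next I would sort by $t_x$. If $e^{it_x}\notin\{\pm1\}$, then $\lambda_x$ has two distinct eigenlines $[h_1],[h_2]$, so $\varphi$ has exactly two fixed points $[p_x,[h_1]]$ and $[p_x,[h_2]]$ in $Z_x$. This gives a pair. If $e^{it_x}=\pm1$, then $\lambda_x=\pm\Id$ acts trivially on $\mathbf{P}H$. In that case $\varphi$ fixes all of $Z_x$, and the fixed points are not isolated. This contradicts the hypothesis $\det_\R(1-T_z\varphi)\neq0$: the vertical tangent space $T^V_zZ=T_zZ_x$ lies in the $1$-eigenspace of $T_z\varphi$, because $\varphi$ is the identity on $Z_x$. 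So this case cannot occur, and every $x\in M^\gamma$ contributes exactly two fixed points in $\pi_Z^{-1}(x)$.

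The main obstacle is rigour in the first step. The identification $\varphi([p_x,[h]])=[p_x,[\lambda_x(h)]]$ with the same frame $p_x$ on both sides uses $\gamma(x)=x$ and the fact that $\varphi(J_z)$ is determined by $T_x\gamma$ through Lemma~\ref{quatlabel1}(3). This determination holds only because $T_x\gamma\neq0$, and that is why the lemma assumes it. I would make this explicit by applying Lemma~\ref{quatlabel1} to $p_x^{-1}\circ T_x\gamma\circ p_x=(v\mapsto T_xv\lambda_x^{-1})$, which yields $\varphi(\nu)=\lambda_x\nu\lambda_x^{-1}$ on $Z_\mathbb{H}$. I would then translate this into eigenvectors: if $h$ is the $i$-eigenvector of $\nu$, then $\lambda_x h$ is the $i$-eigenvector of $\lambda_x\nu\lambda_x^{-1}$. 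The diagonalisation of $\lambda_x$ is a change of frame by a constant $[\Id,\mu]$, as in Corollary~\ref{quatlabel17}.
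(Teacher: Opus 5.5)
Your proposal is correct and follows the paper's proof. Fixed points of $\varphi$ lie over $M^\gamma$, the case $e^{it_x}=\pm1$ is excluded because $\varphi$ would then be the identity on $Z_x$, and otherwise the two eigenlines of the diagonalised $\lambda_x$ give exactly the two fixed points $[p_x,[e_1]],[p_x,[e_2]]$. Your observation that only the pointwise application of Lemma~\ref{quatlabel1} at $x$ is needed also makes explicit something the paper leaves implicit: Lemma~\ref{quatlabel16} assumes $\mathring{M_0}=\emptyset$, which this lemma does not, so the pointwise argument is the one that actually applies.
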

\begin{proof}
We can easily see $\pi_Z(Z^\varphi)\subset M^\gamma$. We have $e^{it_x}\not\in \lbrace -1,1\rbrace$ because of $\det_\R(1-T_z\varphi)\neq 0$. Just as in Corollary~\ref{quatlabel17} and Lemma~\ref{quatlabel1}, it follows that for $x\in M^\gamma$ there are exactly two fixed points of $\varphi$ contained in $\pi_Z^{-1}(x)\cap Z^\varphi$, namely $[p_x,[e_1]],[p_x,[e_2]]$ for a suitable choice of $p_x\in P_x$ as in Corollary~\hyperref[quatlabel17]{\ref*{quatlabel17}.(\ref*{quatlabel18})}.
\end{proof}
\begin{theorem}[Fixed point theorem for the Dolbeault complex of the twistor space]
\label{quatlabel23}
    Let $r\in 2\cdot\N_0$, $M$ be a compact quaternionic manifold and $\pi_F\colon F\rightarrow M$ a $B_2$-vector bundle. Furthermore, let $\varphi\colon Z\rightarrow Z$ be a holomorphic map and $\gamma\colon M\rightarrow M$ a map with $\gamma \circ \pi_Z = \pi_Z \circ \varphi$ such that $\det\nolimits_\R (1-T_z\varphi)\neq 0$ for all $z\in Z^\varphi$, $T_x\gamma\neq 0$ for all $x\in M^\gamma$ and, for $r\neq 0$, $\det\nolimits_\R (T_x\gamma)\neq 0$ for all $x\in M$. With the maps $\varphi^Z,\varphi^{\mathcal{L}^r},\varphi^{\pi_Z^\ast F}$ and $\tau^Z,\tau^{\mathcal{L}^r},\tau^{\pi_Z^\ast F}$ as in the previous section, such that $\tau^{\mathcal{L}^r\tenc\pi_Z^\ast F} := \tau^Z\tenc\tau^{\mathcal{L}^r}\tenc\tau^{\pi_Z^\ast F}$ commutes with $\dolb$, we obtain
    \begin{align*}
        \Tr_\mathrm{s} \left(\tau^{\mathcal{L}^r\tenc\pi_Z^\ast F}_{\vert H^{0,\bullet}_{\dolb}(Z,\mathcal{L}^r\tenc\pi_Z^\ast F)}\right) = \Tr_\mathrm{s} \left(\tau^{V^{\bullet,r}\tenc F}_{\vert H^\bullet_{\delta_{F,r}}(M)}\right) = \sum\limits_{x\in M^\gamma} \omega (x),
    \end{align*}
    where $T_x\gamma = p_x\circ (v\mapsto T_xve^{-it_x}) \circ p_x^{-1}$ for $x\in M^\gamma$ and $p_x\in P_x$ as in Corollary~\hyperref[quatlabel17]{\ref*{quatlabel17}.(\ref*{quatlabel18})} and
    \begin{align*}
        \omega (x) := \Tr (\gamma^F_{\vert x})\cdot \left(\frac{\det_\C (T_x)^{-\frac{r}{2(n+1)}}e^{irt_x}}{(1-e^{-2it_x})\det_\C (1-e^{-it_x}T_x)} + \frac{\det_\C (T_x)^{-\frac{r}{2(n+1)}}e^{-irt_x}}{(1-e^{2it_x})\det_\C (1-e^{it_x}T_x)}\right)
    \end{align*}
    for $e^{it_x}\not\in \lbrace -1,1\rbrace$.\\
    Here, the two summands of $\omega(x)$ are caused by the two fixed points in $Z_x$, thus the fixed point formula above is exactly the classic fixed point formula by Atiyah and Bott.
\end{theorem}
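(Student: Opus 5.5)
The first equality follows from Theorem~\ref{quatlabel12}. The isomorphism $\psi\colon H^\bullet_{\delta_{F,r}}(M)\to H^{0,\bullet}_{\dolb}(Z,\mathcal{L}^r\tenc\pi_Z^\ast F)$ is induced by $\beta_{F,r}\circ\mu_{F,r}$, and $H^{0,2n+1}_{\dolb}=0$. By construction, $\tau^{V^{\bullet,r}}=(\beta_r\circ\mu_r)^{-1}\circ(\tau^Z\tenc\tau^{\mathcal{L}^r})\circ(\beta_r\circ\mu_r)$. By Lemma~\ref{quatlabel20} and the subsequent lemma, $\gamma^F$ and $\varphi^{\pi_Z^\ast F}$ correspond under the identification (\ref{quatlabel19}). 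Hence $\psi$ intertwines $\tau^{V^{\bullet,r}\tenc F}$ with $\tau^{\mathcal{L}^r\tenc\pi_Z^\ast F}$, and the supertraces agree. The hypotheses imply those of Theorem~\ref{quatlabel33}: $T_x\gamma\neq 0$ together with the nondegeneracy of $\varphi$ at the two fibre fixed points gives $e^{it_x}\notin\{\pm1\}$ and $\det_\R(1-T_x\gamma)\neq 0$. Therefore the second equality is Theorem~\ref{quatlabel33}, case $e^{it_x}\notin\{\pm1\}$. What remains is the claim that the formula is the classical Atiyah--Bott formula on $Z$, with one summand per fixed point of $\varphi$.

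For that, I would apply Theorem~\ref{quatlabel14} to the Dolbeault complex on $Z$, in the holomorphic form of \cite{atiyahbott2}. The contribution of $z\in Z^\varphi$ is $\Tr(\varphi^{\mathcal{L}^r\tenc\pi_Z^\ast F}_{\vert z})/\det_\C(1-T^{1,0}_z\varphi)$. This uses $\Tr_\mathrm{s}$ on $\Lambda^{0,\bullet}$, which equals $\det_\C(1-\overline{T_z\varphi})$, while $|\det_\R(1-T_z\varphi)|=|\det_\C(1-T_z\varphi)|^2$. By Lemma~\ref{quatlabel22}, the fixed points over $x\in M^\gamma$ are $z_1=[p_x,[e_1]]$ and $z_2=[p_x,[e_2]]$, with $p_x$ chosen as in Corollary~\hyperref[quatlabel17]{\ref*{quatlabel17}.(\ref*{quatlabel18})}.

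At $z_j$ I would compute three factors, splitting $T_{z_j}Z=T^H\oplus T^V$. First, the vertical part is the derivative of $[h]\mapsto[\lambda_x h]$ on $\mathbf{P}H$ at $[e_j]$. On $T^{1,0}$ it acts by $e^{\mp 2it_x}$, giving the factor $1-e^{\mp2it_x}$.

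Second, the horizontal part: $T^{1,0,H}_{z_j}$ is identified via (\ref{quatlabel7}) with $\{[p,e\otimes J_r(e_j)]\}$, on which $T_x\gamma$ acts by $e\mapsto T_xe$ times the eigenvalue $e^{\pm it_x}$ of $\lambda_x^{-1}$ on $J_r(e_j)$. This gives $\det_\C(1-e^{\pm it_x}T_x)$. Since $T\varphi$ is block triangular with respect to the splitting, not block diagonal, only the diagonal blocks matter for the determinant.

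Third, the fibre traces: $\pi_Z^\ast F$ contributes $\Tr(\gamma^F_{\vert x})$, and $\varphi^{\mathcal{L}^r}$ at $z_j$ acts on the fibre $\mathcal{O}(r)_{[e_j]}$ by $\det_\C(T_x)^{-\frac{r}{2(n+1)}}(s'\mapsto s'\circ\lambda_x^{\otimes r})$, which is $\det_\C(T_x)^{-\frac{r}{2(n+1)}}e^{\pm irt_x}$.

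Multiplying the three factors gives exactly the two summands of $\omega(x)$. The main obstacle is bookkeeping of signs and conjugations. One must match which $e_j$ corresponds to which sign of $t_x$ consistently across the vertical eigenvalue, the eigenvalue on $J_r(e_j)$ (recall $T^{1,0}$ uses $J_r(h)$), and the action on $\mathcal{O}(r)$. One must also handle the $(0,\bullet)$ versus $(1,0)$ duality correctly, so that the denominators come out as $(1-e^{-2it_x})\det_\C(1-e^{-it_x}T_x)$ paired with $e^{irt_x}$. I would check this against the already established sum in the proof of Theorem~\ref{quatlabel33}: the total is fixed, and a consistent assignment must reproduce it.
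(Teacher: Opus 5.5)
Your route is the paper's: the first equality via Theorem~\ref{quatlabel12} and the intertwining of $\tau^{V^{\bullet,r}\tenc F}$ with $\tau^{\mathcal{L}^r\tenc\pi_Z^\ast F}$, then the reduction to Theorem~\ref{quatlabel33}, then \cite[Theorem~4.12]{atiyahbott2} at $z_1=[p_x,[e_1]]$ and $z_2=[p_x,[e_2]]$. That part is fine. The reduction uses two facts: $t_x\notin\pi\Z$, since otherwise $\varphi_{\vert Z_x}=\Id$; and $\det_\R(1-T_x\gamma)\neq 0$, from the block-triangular form of $T_z\varphi$.

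The gap is in the horizontal factor, and that factor is the whole content of the last assertion. You let $T_x\gamma$ act on $T^{1,0}_xM_{J_{z_j}}\cong\{[p_x,e\tenc J_r(e_j)]\}$ through the eigenvalue of $\lambda_x^{-1}$ on $J_r(e_j)$. That is the wrong action. Under the $G$-equivariant isomorphism $\R^{4n}\tenr\C\cong E\tenc H$ (Remark~\ref{quatlabel30}), the map $v\mapsto T_xv\lambda_x^{-1}$ corresponds to $T_x\tenc\lambda_x$. The inverse $\lambda_x^{-1}$ appears only on the $\Sym^{k+r}H$ factor of \emph{forms} in $\gamma^{V^{k,r}}$, via $H\cong H^\ast$. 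Since $J_r(e_1)=e_2$, the correct factor at $z_1$ is $\det_\C(1-e^{-it_x}T_x)$. You can see this directly: $J_{z_1}=p_x\circ R_i\circ p_x^{-1}$, and on $(\mathbb{H}^n,R_i)\cong\C^{2n}$ the map $v\mapsto T_xve^{-it_x}$ is the $\C$-linear map $e^{-it_x}T_x$.

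Your other two factors at $z_1$ are right. The vertical factor is $1-e^{-2it_x}$, from the derivative of $[h]\mapsto[\lambda_xh]$ at $[e_1]$. The line-bundle trace on $\mathcal{O}(r)_{[e_1]}$ is $\det_\C(T_x)^{-\frac{r}{2(n+1)}}e^{irt_x}$. Combined with your horizontal factor $\det_\C(1-e^{it_x}T_x)$, your contribution at $z_1$ becomes
\[
\Tr(\gamma^F_{\vert x})\,\frac{\det_\C(T_x)^{-\frac{r}{2(n+1)}}e^{irt_x}}{(1-e^{-2it_x})\det_\C(1-e^{it_x}T_x)}.
\]
This is not a summand of $\omega(x)$. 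The resulting sum over $z_1,z_2$ differs from $\omega(x)$ in general; already for $r=0$, $T_x=a\Id$ with $a\in\R\setminus\{0\}$ and generic $t_x$, the difference is $\frac{1+e^{-2it_x}}{1-e^{-2it_x}}\bigl((1-ae^{-it_x})^{-2n}-(1-ae^{it_x})^{-2n}\bigr)\neq 0$. These values are forced by the rules you state, so this is not a matter of how the $\pm$ signs are read.

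Your proposed remedy, fixing the signs so that the total from Theorem~\ref{quatlabel33} is reproduced, does not close the gap. It presupposes the per-point correspondence you are supposed to establish. A mismatch in the total can only tell you that some factor is off, not which one.

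With $\lambda_x$ in place of $\lambda_x^{-1}$, the three factors are:
\begin{itemize}
\item at $z_1$: $1-e^{-2it_x}$, $\det_\C(1-e^{-it_x}T_x)$ and $e^{irt_x}$;
\item at $z_2$, where $J_r(e_2)=-e_1$: $1-e^{2it_x}$, $\det_\C(1-e^{it_x}T_x)$ and $e^{-irt_x}$.
\end{itemize}
This is exactly the paper's computation, and it yields the two summands of $\omega(x)$.
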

\begin{proof}
According to Lemma~\ref{quatlabel22}, the fixed points of $\varphi$ occur in pairs in $\pi_Z^{-1}(M^\gamma)$. For a fixed point $x\in M^\gamma$ with $T_x\gamma = p_x\circ (v\mapsto T_xve^{-it_x}) \circ p_x^{-1}$ for $x\in M^\gamma$ and $p_x\in P_x$ as in Corollary~\hyperref[quatlabel17]{\ref*{quatlabel17}.(\ref*{quatlabel18})}, $t_x\in \pi\cdot\Z$ can not be possible because otherwise we would have $\varphi_{\vert Z_x}=\Id_{Z_x}$ according to Corollary~\ref{quatlabel17}, which, however, contradicts the assumption of isolated fixed points. According to Theorem~\ref{quatlabel12}, $(\beta_{F,r}\circ\mu_{F,r})\colon H^\bullet_{\delta_{F,r}}(M)\rightarrow H^{0,\bullet}_{\dolb}(Z,\mathcal{L}^r\tenc\pi_Z^\ast F)$ is an isomorphism, and we have $\tau^{V^{\bullet,r}\tenc F}=(\beta_{F,r}\circ\mu_{F,r})^{-1} \circ \tau^{\mathcal{L}^r\tenc\pi_Z^\ast F} \circ (\beta_{F,r}\circ\mu_{F,r})$.\\
We choose a fixed torsion-free connection on the $G$-structure of $M$ and consider the induced decomposition $TZ=T^VZ\oplus T^HZ$. For $z\in Z^\varphi$ with $x:=\pi_Z(z)$, we have 
\begin{align*}
    T_z\varphi = \begin{pmatrix}
        T_z\varphi_{\vert Z_x} & \star\\
        0 & ({T_z\pi_Z}_{\vert T^H_z Z})^{-1}\circ T_x\gamma \circ {T_z\pi_Z}_{\vert T^H_z Z}
    \end{pmatrix}.
\end{align*}
Therefore,
\begin{align*}
    0\neq &\det\nolimits_\R(1-T_z\varphi)=\det\nolimits_\R\begin{pmatrix}
        1-T_z\varphi_{\vert Z_x} & \star\\
        0 & 1-({T_z\pi_Z}_{\vert T^H_z Z})^{-1}\circ T_x\gamma \circ {T_z\pi_Z}_{\vert T^H_z Z}
    \end{pmatrix}\\
    =& \det\nolimits_\R(1-T_z\varphi_{\vert Z_x})\det\nolimits_\R(1-({T_z\pi_Z}_{\vert T^H_z Z})^{-1}\circ T_x\gamma \circ {T_z\pi_Z}_{\vert T^H_z Z})\\
    =& \det\nolimits_\R(1-T_z\varphi_{\vert Z_x})\det\nolimits_\R(1-T_x\gamma),
\end{align*}
where $\varphi_{\vert Z_x}$ is the restriction of $\varphi$ to the fibre $Z_x$. We obtain $\det\nolimits_\R(1-T_x\gamma)\neq 0$.\\
The first statement thus follows from Theorem~\ref{quatlabel33}.\\
For the second statement, we first note that we can of course also compute this directly by applying Theorem~\ref{quatlabel14} to the Dolbeault complex. The general application to the Dolbeault complex can be found in \cite[Theorem~4.12]{atiyahbott2}, from which it follows that
\begin{align*}
    \omega(x):=\sum\limits_{z\in Z^\varphi\cap Z_x}\frac{\Tr \left(\varphi^{\mathcal{L}^r}_{\vert z}\tenc\varphi^{\pi_Z^\ast F}_{\vert z}\right)}{\det_\C (1-T^{1,0}_z\varphi)}
\end{align*}
holds, which we will compute now. With regard to the decomposition $TZ=T^VZ\oplus T^HZ$, we have $\mathcal{J}=\begin{pmatrix}
        \mathcal{J}_{\vert T^VZ} & 0\\
        0 & \mathcal{J}_{\vert T^H Z}
    \end{pmatrix}$ and therefore $T^{1,0}Z=T^{1,0,V}Z\oplus T^{1,0,H}Z$. We have $T^{1,0,H}_zZ\cong T^{1,0}M_{J_z}$ by applying $T_z\pi_Z\tenr \Id_\C$ due to the construction of $\mathcal{J}_{\vert T^H Z}$. In summary, we obtain
\begin{align*}
    T^{1,0}_z\varphi = \begin{pmatrix}
        {T_z\varphi_{\vert Z_x}\tenr \Id_\C}_{\vert T^{1,0,V}_zZ} & \star\\
        0 & {(({T_z\pi_Z}_{\vert T^H_z Z})^{-1}\circ T_x\gamma \circ {T_z\pi_Z}_{\vert T^H_z Z})\tenr \Id_\C}_{\vert T_z^{1,0,H}Z}
    \end{pmatrix}
\end{align*}
The two fixed points in $Z^\varphi\cap Z_x$ are $z_{x,1}:=[p_x,[e_1]],z_{x,
2}:=[p_x,[e_2]]$.\\
The map $\varphi_{\vert Z_x}$ is holomorphic and we can determine $ {T_z\varphi_{\vert Z_x}\tenr \Id_\C}_{\vert T^{1,0,V}_zZ}$ using the complex structure of the fibres. It is easy to see that ${T_{z_{x,1}}\varphi_{\vert Z_x}\tenr \Id_\C}_{\vert T^{1,0,V}_{z_{x,1}}Z}=e^{-2it_x}$ and ${T_{z_{x,2}}\varphi_{\vert Z_x}\tenr \Id_\C}_{\vert T^{1,0,V}_{z_{x,2}}Z}=e^{2it_x}$ because the restriction of the map to the fibres is exactly $[h]\mapsto[\begin{pmatrix}
    e^{it_x}&0\\0&e^{-it_x}
\end{pmatrix}h]$.\\
Similar to the proof of Theorem~\ref{quatlabel33}, together with (\ref{quatlabel7}) and $T^{1,0}_zZ\cong T^{1,0}M_{J_z}$ we obtain
\begin{align*}
    &\det\nolimits_\C (1-{(({T_z\pi_Z}_{\vert T^H_z Z})^{-1}\circ T_x\gamma \circ {T_z\pi_Z}_{\vert T^H_z Z})\tenr \Id_\C}_{\vert T_z^{1,0,H}Z})\\
    =& \det\nolimits_\C (1- {T_x\gamma\tenr \Id_\C}_{\vert T_x^{1,0}Z_{J_z}})\\
    =& \begin{cases}
        \det\nolimits_\C(1-e^{-it_x}T_x)\text{ for } z={z_{x,1}},\\
        \det\nolimits_\C(1-e^{it_x}T_x)\text{ for } z={z_{x,2}}.
    \end{cases}
\end{align*}
According to Lemma~\ref{quatlabel20}, we have
\begin{align*}
    \Tr \left(\varphi^{\mathcal{L}^r}_{\vert z}\tenc\varphi^{\pi_Z^\ast F}_{\vert z}\right) =& \Tr \left(\varphi^{\mathcal{L}^r}\right) \Tr\left(\varphi^{\pi_Z^\ast F}\right) = \begin{cases}
        \det\nolimits_\C (T_x)^{-\frac{r}{2(n+1)}}e^{rit_x}\Tr (\gamma^F_{\vert x}) \text{ for } z = z_{x,1},\\
        \det\nolimits_\C (T_x)^{-\frac{r}{2(n+1)}}e^{-rit_x}\Tr (\gamma^F_{\vert x}) \text{ for } z = z_{x,2}.
    \end{cases}
\end{align*}
In summary, we have
\begin{align*}
    \frac{\Tr \left(\varphi^{\mathcal{L}^r}_{\vert z}\tenc\varphi^{\pi_Z^\ast F}_{\vert z}\right)}{\det_\C (1-T^{1,0}_z\varphi)}=\begin{cases}
        \Tr (\gamma^F_{\vert x})\frac{\det_\C (T_x)^{-\frac{r}{2(n+1)}}e^{irt_x}}{(1-e^{-2it_x})\det_\C (1-e^{-it_x}T_x)} \text{ for } z=z_{x,1},\\
        \Tr (\gamma^F_{\vert x})\frac{\det_\C (T_x)^{-\frac{r}{2(n+1)}}e^{-irt_x}}{(1-e^{2it_x})\det_\C (1-e^{it_x}T_x)} \text{ for } z=z_{x,2}.
    \end{cases}
\end{align*}
\end{proof}
\subsection{Fixed Point Theorem for Wolf Spaces}
\label{quatsect4.4}
An interesting class of quaternionic manifolds are Wolf spaces, which are symmetric spaces that were classified in \cite{wolf}. Symmetric quaternionic manifolds always have constant scalar curvature, but the spaces with negative scalar curvature are not compact. Spaces with positive scalar curvature, on the other hand, are compact, and these are precisely the spaces $G/K$ for a compact simple Lie group without centre and its Lie subgroup $K$, which is the normaliser of the Lie subgroup generated by a maximal root. We will now consider an application of the fixed point theorems for exactly those spaces, since everything can be expressed with the help of roots. First, we give a brief summary of \cite{wolf} on the construction of these spaces. For the foundations of representation theory and the theory of compact Lie groups, the reader is referred to \cite{broetomdieck} and \cite{sepanski}.\\

Let $T=\frt/\Lambda\subset G$ be a maximal torus of a compact simple Lie group $G$ that has no centre, and let $\frg,\frt$ be the associated Lie algebras. The Killing form $B$ of $G$ provides a non-degenerate bilinear form on $\frg$, and therefore let $H_\alpha\in\frt$ for $\alpha\in\frt^\ast$ be defined by $\alpha=B(H_\alpha,\cdot)$.\\
Let $R\subset \Lambda^\ast:=\lbrace \lambda\in\frt^\ast\mid\lambda(\Lambda)\subset \Z\rbrace$ be the set of roots of $G$. With respect to a choice of positive roots $R^+\subset R$, let $\beta$ be the maximum root. Furthermore, let $R_{0,\beta}=\lbrace\alpha\in R\mid B(H_\alpha,H_\beta)=0\rbrace$ and $R_{0,\beta}^+:=R^+\cap R_{0,\beta}$. With the weight spaces $\frg_\alpha$ of a root $\alpha\in R$, we define 
\begin{align*}
    \frk' &:=\lbrace H\in \frt\mid \beta (H)=0\rbrace\oplus \bigoplus\nolimits_{\alpha \in R^+_{0,\beta}} \frg\cap(\frg_\alpha\oplus\frg_{-\alpha}),\\
    \fra &:=\frg\cap(\C H_\beta\oplus\frg_\beta\oplus \frg_{-\beta}),\\
    \frp &:= \bigoplus\nolimits_{\alpha \in R^+\setminus (\lbrace \beta\rbrace\cup R^+_{0,\beta})} \frg\cap(\frg_\alpha\oplus\frg_{-\alpha})
\end{align*}
and $K',A,K_1:=K'\cdot A$ to be the corresponding Lie subgroups of $G$ to the Lie algebras $\frk',\fra,\frk_1:=\frk'\oplus\fra$. In particular, $A\cong \spn{1}$ and $K'$ is isomorphic to a subgroup of $\spn{n}$ for some $n\in\N$, so $K_1\subset \spn{n}\cdot \spn{1}$. Then $M:=G/K_1$ is a Wolf space with the twistor space $Z:=G/K_2$ with $L:=\exp_G (\R\cdot H_\beta)\subset A, K_2:=K'\cdot L$. The Lie algebra of $K_2$ is $\frk_2:=\frk'\oplus \R \cdot H_\beta$.\\
We now have $T_{eK_2} G/K_2=\bigoplus\limits_{\alpha \in R^+\setminus R^+_{0,\beta}} \frg_\alpha\oplus\frg_{-\alpha}$ and the $G$-invariant complex structure of $G/K_2$ is chosen in such a way that
\begin{align*}
    T_{eK_2}^{1,0} G/K_2 = \bigoplus\limits_{\alpha \in R^+\setminus R^+_{0,\beta}} \frg_\alpha&&\text{and}&&
    T_{eK_2}^{0,1} G/K_2 = \bigoplus\limits_{\alpha \in R^+\setminus R^+_{0,\beta}} \frg_{-\alpha}.
\end{align*}
We denote by $L_g$ the action of $g\in G$ on $G/K_1$ and $G/K_2$, respectively. For every $g\in G$, the map $L_g\colon G/K_2\rightarrow G/K_2$, $g'K_2\mapsto gg'K_2$ is a holomorphic map, and $L_g\colon G/K_1\rightarrow G/K_1$ is therefore a quaternionic map. Since every element of $G$ is contained in a maximal torus and any two maximal tori are conjugate to each other, $g$ is conjugate to an element $t\in T$, thus it is sufficient to consider maps $L_t\colon G/K_2\rightarrow G/K_2$ for $t\in T$. In order to ensure that we obtain non-degenerate fixed points, we additionally require that the considered $t=\exp_G(X)\in T$ generates the torus, i.e.\ $\overline{\langle t\rangle_\Z}=T$ must hold.\\
Note that we have $T\subset K_2\subset K_1$. The fixed points of $L_t$ are always non-degenerate on $G/K_1$ and are given by $n_1K_1,\ldots,n_kK_1$ for $W_G/W_{K_1}=\lbrace (n_1T)W_{K_1},\ldots,(n_kT)W_{K_1}\rbrace$ and $k=\vert W_G/W_{K_1}\vert$.\\
The fixed points of $L_t$ on $G/K_2$ are also non-degenerate. With the map $\Theta \colon \frg\otimes_\R\C\rightarrow \frg\otimes_\R \C$, $Y\otimes z\mapsto Y\otimes \overline{z}$, let $X_j\in\frg_\beta\setminus \lbrace 0\rbrace$ be chosen such that $[X_j,-\Theta(X_j)]=\frac{H_\beta} {\pi i B(H_\beta,H_\beta)}$. Consider $a_j:=\exp_G(\frac{\pi}{2} J)\in A$ for $J:=-(X_j+\Theta (X_j))$. It follows that the fixed points of $L_t$ on $G/K_2$ correspond exactly to the elements of
\begin{align*}
    \lbrace (n_1T)W_{K_2},(n_1a_jT)W_{K_2}, \ldots, (n_kT)W_{K_2},(n_ka_jT)W_{K_2}\rbrace = W_G/W_{K_2}.
\end{align*}
Let $gK_2\in\lbrace n_1K_2,n_1a_jK_2, \ldots, n_kK_2,n_ka_jK_2\rbrace$ be one of the fixed points of $L_t\colon G/K_2\rightarrow G/K_2$. Then $gTg^{-1}\subset T$, $g^{-1}Tg\subset T$ and $T_{gK_2}L_t$ has the same eigenvalues as
\begin{align*}
    (T_{eK_2}L_g)^{-1}\circ T_{gK_2}L_t\circ T_{eK_2}L_g = T_{eK_2}L_{g^{-1}tg}\colon T_{eK_2}G/K_2 \rightarrow T_{eK_2}G/K_2.
\end{align*}
We have $T_{eK_2}L_{g^{-1}tg}={\Ad_{g^{-1}tg}}_{\vert \frg/\frk_2 }$ due to $g^{-1}tg=g^{-1}\exp_G(X)g=\exp_G(\Ad_{g^{-1}}(X))$ which also yields $\Ad_{g^{-1}tg} (v)=e^{2\pi i\alpha (\Ad_{g^{-1}}(X))}v$ for $v\in \frg_\alpha$, i.e.\ 
\begin{align*}
    \Ad_t(\Ad_g(v))=e^{2\pi i\alpha (\Ad_{g^{-1}}(X))}\Ad_g(v).
\end{align*}

The element $w=gT\in W_G$ of the Weyl group acts on the roots by $w\cdot \alpha=\alpha\circ\Ad_{g^{-1}}$. Thus,
\begin{align*}
    \Ad_{g^{-1}}\circ \Ad_t\circ \underbrace{\Ad_g(v)}_{\in \frg_{w\cdot \alpha}} = \Ad_{g^{-1}} (e^{2\pi i(w\cdot \alpha)(X)}\Ad_g (v))=e^{2\pi i(w\cdot \alpha)(X)}v
\end{align*}
now follows for $\alpha\in\frg_\alpha$. Hence, $T_{gK_2}L_t$ on $T_{gK_2}G/K_2$ has the eigenvalues $e^{\pm 2\pi i (w\cdot \alpha)(X)}$ for $\alpha\in R^{+}\setminus R^+_{0,\beta} $ and on the holomorphic tangent bundle the eigenvalues are just $e^{2\pi i (w\cdot \alpha)(X)}$ for $\alpha\in R^{+}\setminus R^+_{0,\beta}$, where the eigenvalue for the vertical tangent bundle is $e^{2\pi i(w\cdot \beta)(X)}$.\\
As in \cite{kkquattorsion}, we now consider a complex representation $\rho_\mathcal{W}\colon K_1\rightarrow \Aut_\C (\mathcal{W})$ of $K_1$ such that $A\subset K_1$ acts trivially. Then the associated vector bundle $G\times_{K_1}\mathcal{W}$ yields a $B_2$-vector bundle, and the pullback vector bundle on $G/K_2$ is precisely $G\times_{K_2}\mathcal{W}$.\\
The action of $G$ on $G\times_{K_2}\mathcal{W}$ yields a biholomorphic map $L_t^\mathcal{W}\colon G\times_{K_2}\mathcal{W} \rightarrow L_t^\ast (G\times_{K_2}\mathcal{W})$, $[g,v]\mapsto (gK_2,[tg,v])$, whose inverse is precisely a map $(L_t^\mathcal{W})^{-1}\colon L_t^\ast (G\times_{K_2}\mathcal{W}) \rightarrow G\times_{K_2}\mathcal{W}$ that fulfils the conditions from the fixed point theorem. At a fixed point $gK_2$ of $L_t$ on $G/K_2$, we have $g^{-1}tg=:k\in K_2$ and $[t\cdot g,v]=[gk,v]=[g,\rho_\mathcal{W} (k)v]=[g,\rho_\mathcal{W} (g^{-1}tg)v]$. Thus, we obtain $\Tr_\C ((L_t^\mathcal{W})^{-1})_{\vert gK_2}=\Tr (\rho_\mathcal{W}(g^{-1}tg)^{-1})=\Tr (\rho_\mathcal{W}(g^{-1}t^{-1}g))$, which is exactly the character $\chi_{\rho_\mathcal{W}}(g^{-1}t^{-1}g)$ of the representation $\rho_\mathcal{W}$ at the point $w^{-1}\cdot t^{-1}= g^{-1}t^{-1}g$ for $w=gT\in W_G$. For the fixed points $n_lK_2,n_la_jK_2\in \pi_Z^{-1}(n_lK_1)$, we note that $\rho_\mathcal{W}(n_l^{-1}t^{-1}n_l)=\rho_\mathcal{W}(a_j^{-1}n_l^{-1}t^{-1}n_la_j)$ because $A$ acts trivially.\\
In summary, we obtain the following application of the fixed point theorem.
\begin{corollary}
\label{quatlabel24}
    Let $M:=G/K_1$ be a Wolf space with positive scalar curvature with maximal torus $T$ and twistor space $Z:=G/K_2$. For $r\in 2\cdot\N_0$ and a representation $\rho_\mathcal{W}\colon K_1\rightarrow \Aut_\C (\mathcal{W})$, such that $A\subset K_1$ acts trivially, we obtain
    \begin{align*}
        &\Tr_\mathrm{s} \left(\tau^{\mathcal{L}^r\tenc (G\times_{K_2}\mathcal{W})}_{\vert H^{0,\bullet}_{\dolb}(G/K_2,\mathcal{L}^r\tenc (G\times_{K_2}\mathcal{W}))}\right) = \Tr_\mathrm{s} \left(\tau^{V^{\bullet,r}\tenc G\times_{K_1}\mathcal{W}}_{\vert H^\bullet_{\delta_{G\times_{K_1}\mathcal{W},r}}(G/K_1)}\right) \\
        =& \begin{aligned}[t]
            \sum\limits_{[w]\in W_G/W_{K_1}} &\chi_{\rho_\mathcal{W}} (w^{-1}\cdot t^{-1})\\
            &\cdot\left(\frac{e^{r\pi i (w\cdot \beta)(X)}}{\prod\limits_{\alpha \in R^+\setminus R^+_{0,\beta}} (1-e^{-2\pi i (w\cdot \alpha)(X)})} + \frac{e^{-r\pi i (w\cdot \beta)(X)}}{\prod\limits_{\alpha \in R^+\setminus R^+_{0,\beta}} (1-e^{2\pi i (w\cdot \alpha)(X)})} \right)
        \end{aligned}\\
        =& \sum\limits_{[w]\in W_G/W_{K_2}}  \frac{\chi_{\rho_\mathcal{W}} (w^{-1}\cdot t^{-1}) e^{r\pi i (w\cdot \beta)(X)}}{\prod\limits_{\alpha \in R^+\setminus R^+_{0,\beta}} (1-e^{-2\pi i (w\cdot \alpha)(X)})}
    \end{align*}
    for a generating element of the torus $t:=\exp_G(X)\in T$.
\end{corollary}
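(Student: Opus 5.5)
The plan is to apply Theorem~\ref{quatlabel23} to $\varphi:=L_t\colon G/K_2\rightarrow G/K_2$, $\gamma:=L_t\colon G/K_1\rightarrow G/K_1$ and $F:=G\times_{K_1}\mathcal{W}$, with $\varphi^{\pi_Z^\ast F}:=(L_t^\mathcal{W})^{-1}$. This settles the first equality directly, because Theorem~\ref{quatlabel23} identifies both supertraces with $\sum_{x\in M^\gamma}\omega(x)$. I would then evaluate the right-hand side in its twistor form, i.e.\ as a sum over the fixed points of $L_t$ on $G/K_2$, using the Atiyah--Bott local contribution
\[
\frac{\Tr(\varphi^{\mathcal{L}^r}_{\vert z}\tenc\varphi^{\pi_Z^\ast F}_{\vert z})}{\det_\C(1-T^{1,0}_z\varphi)}.
\]

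The hypotheses are checked as follows. $L_t$ is holomorphic on $G/K_2$ and commutes with the projection. $T_xL_t$ is invertible everywhere, since $L_t$ is a diffeomorphism, so the condition for $r\neq0$ holds. The fixed points are non-degenerate because $t$ generates $T$: the eigenvalues $e^{2\pi i(w\cdot\alpha)(X)}$ are $\neq1$, as $\alpha(X)\notin\Z$ for every root when $t$ is generic. The $B_2$-property of $F$ and the holomorphy of the lift were recorded just before the statement.

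At a fixed point $gK_2$ with $w=gT$, the computation preceding the corollary gives $\det_\C(1-T^{1,0}\varphi)=\prod_{\alpha\in R^+\setminus R^+_{0,\beta}}(1-e^{2\pi i(w\cdot\alpha)(X)})$, up to the sign convention for which the holomorphic eigenvalues appear. I would fix this convention by matching the vertical eigenvalue $e^{\mp2\pi i(w\cdot\beta)(X)}$ against $e^{\mp 2it_x}$ in Theorem~\ref{quatlabel23}. That comparison yields $t_x=\pi (w\cdot\beta)(X)$ at $z_{x,1}$, and explains the factor $e^{\pm r\pi i(w\cdot\beta)(X)}$ as the trace of $\varphi^{\mathcal{L}^r}$.

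The trace on $\mathcal{L}^r$ is obtained from the formula $\det_\C(T_x)^{-r/(2(n+1))}e^{\pm irt_x}$. Here $T_x$ comes from $\Ad_{g^{-1}tg}$ restricted to the $K'$-part, which lies in $\Sp(n)$, so its complex determinant is $1$. That leaves $e^{\pm r\pi i(w\cdot\beta)(X)}$. The trace on $\pi_Z^\ast F$ is $\chi_{\rho_\mathcal{W}}(w^{-1}\cdot t^{-1})$, as computed above, and it takes the same value at both points $n_lK_2$ and $n_la_jK_2$ because $A$ acts trivially.

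To get the second expression, I would pair the two fixed points over each $n_lK_1$. Conjugation by $a_j$ acts on $\frt$ as the reflection $s_\beta$, which sends $\beta\mapsto-\beta$ and permutes the roots outside $R_{0,\beta}$ compatibly. Its effect is to replace $w\cdot\alpha$ by $-(w\cdot\alpha)$ for the relevant positive roots. The product over $R^+\setminus R^+_{0,\beta}$ then changes from $(1-e^{-2\pi i\cdots})$ to $(1-e^{2\pi i\cdots})$, and the $\mathcal{L}^r$-factor flips sign in the exponent. Summing over $W_G/W_{K_1}$ produces the two-term bracket.

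The last equality is a reindexing. Since $W_G/W_{K_2}=\{n_lT,n_la_jT\}$, the pairs above together run exactly over $W_G/W_{K_2}$, and each term becomes the single-term expression for the corresponding $w$.

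The main obstacle is the bookkeeping of sign and orientation conventions. This covers which of $e^{\pm2\pi i(w\cdot\alpha)(X)}$ appears in $\det(1-T^{1,0})$ versus in the inverse-pullback convention of $\tau$, and the verification that $s_\beta$ maps $R^+\setminus R^+_{0,\beta}$ to $-(R^+\setminus R^+_{0,\beta})$ up to roots whose contributions cancel. I would handle the latter first, using the fact that $\beta$ is the highest root, so that $\langle\alpha,\beta^\vee\rangle\in\{0,1\}$ for $\alpha\in R^+\setminus\{\beta\}$.
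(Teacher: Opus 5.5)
Your proposal is correct and follows essentially the same route as the paper. The paper also applies Theorem~\ref{quatlabel23} to $L_t$ on $G/K_2\to G/K_1$, identifies the fixed points with $W_G/W_{K_2}$, reads off the eigenvalues and the character $\chi_{\rho_\mathcal{W}}(w^{-1}\cdot t^{-1})$, and groups the two fixed points $n_lK_2,n_la_jK_2$ over each $n_lK_1$. You additionally make explicit two steps the paper leaves implicit: that $\det_\C(T_x)=1$, and that $s_\beta$ maps $R^+\setminus R^+_{0,\beta}$ exactly onto $-(R^+\setminus R^+_{0,\beta})$ via $\beta\mapsto-\beta$ and $\alpha\mapsto-(\beta-\alpha)$ for $\alpha\neq\beta$, so no cancelling contributions are needed.
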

\hfill\break
The positive roots of $K_2$ are $R^+_{0,\beta}$. Therefore, we define $\rho_G:=\frac{1}{2}\sum\limits_{\alpha\in R^+} \alpha$ and $\rho_{K_2}:=\frac{1}{2}\sum\limits_{\alpha\in R^+_{0,\beta}} \alpha$ to be half the sum of all positive roots.\\
For $\lambda\in\Lambda^\ast$, set $\operatorname{Alt}_G(\lambda):=\sum\limits_{w\in W_G} (-1)^w e^{2\pi i(w\cdot \lambda)}$ where $(-1)^w$ denotes the sign of $w\in W_G$. The (formal) character is defined as
\begin{align*}
    \chi_{\rho_G+\lambda}(t):=\frac{\operatorname{Alt}_G(\rho_G+\lambda)(t)}{\operatorname{Alt}_G(\rho_G)(t)}.
\end{align*}
Let $\mathcal{W}:=\bigoplus\limits_{\lambda \in \Delta (\mathcal{W})}\mathcal{W}_\lambda$ be the decomposition of $\mathcal{W}$ into weight spaces, where $\Delta (\mathcal{W})\subset \Lambda^\ast$ is the set of weights and $m_\lambda:=\dim_\C (\mathcal{W}_\lambda)$.
\begin{lemma}
    In the setting of Corollary~\ref{quatlabel24} with $r=2r'$, we have
    \begin{align*}
        \sum\limits_{[w]\in W_G/W_{K_2}}  \frac{\chi_{\rho_\mathcal{W}} (w^{-1}\cdot t^{-1}) e^{r\pi i (w\cdot \beta)(X)}}{\prod\limits_{\alpha \in R^+\setminus R^+_{0,\beta}} (1-e^{-2\pi i (w\cdot \alpha)(X)})} = \sum\limits_{\lambda\in\Delta(\mathcal{W})} m_\lambda \chi_{\rho_G+r'\beta-\lambda}(t).
    \end{align*}
\end{lemma}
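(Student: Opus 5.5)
We compare both sides as sums over the Weyl group $W_G$, rewriting the left-hand side with the Weyl denominator. The plan is to lift the sum over $W_G/W_{K_2}$ to a sum over $W_G$, expand the character of $\mathcal{W}$ into weights, and recognise each weight contribution as a Weyl character formula quotient.

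\textbf{Step 1: Weyl denominator.} The denominator at the coset $[w]$ is $\prod_{\alpha\in R^+\setminus R^+_{0,\beta}}(1-e^{-2\pi i(w\cdot\alpha)(X)})$. We first multiply numerator and denominator by $\prod_{\alpha\in R^+_{0,\beta}}(1-e^{-2\pi i(w\cdot\alpha)(X)})$. We then use the identity
\begin{align*}
\prod_{\alpha\in R^+}(1-e^{-2\pi i(w\cdot\alpha)(X)})=e^{-2\pi i(w\cdot\rho_G)(X)}(-1)^w\operatorname{Alt}_G(\rho_G)(t).
\end{align*}
This holds because $\prod_{\alpha\in R^+}(e^{\pi i\alpha}-e^{-\pi i\alpha})$ is $W_G$-anti-invariant, and the Weyl denominator formula turns it into $\operatorname{Alt}_G(\rho_G)$. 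The denominator therefore becomes $(-1)^w e^{-2\pi i (w\cdot\rho_G)(X)}\operatorname{Alt}_G(\rho_G)(t)$, which is independent of the coset representative up to the sign and the $\rho$-shift.

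\textbf{Step 2: unfold the $K_2$-factor.} For $K_2$, whose positive roots are $R^+_{0,\beta}$, we apply the Weyl character formula to $\mathcal{W}$ restricted to $K_2$; here $A$ acts trivially, so only $K'$ together with $L$ matters. Writing $\chi_{\rho_\mathcal{W}}(w^{-1}t^{-1})=\sum_\lambda m_\lambda e^{-2\pi i(w\cdot\lambda)(X)}$ and combining with the extra factor $\prod_{\alpha\in R^+_{0,\beta}}(1-e^{-2\pi i(w\cdot\alpha)(X)})$, we get, after summing over $W_{K_2}$ (equivalently, over $w'\in W_{K_2}$ acting within each coset),
\begin{align*}
\sum_{[w]\in W_G/W_{K_2}} f(w) = \sum_{w\in W_G}\frac{(-1)^w e^{2\pi i(w\cdot(\rho_G+r'\beta-\lambda))(X)}}{\operatorname{Alt}_G(\rho_G)(t)}
\end{align*}
for each weight $\lambda$. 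Two facts go into this. First, $\beta$ is orthogonal to $R_{0,\beta}$, so $W_{K_2}$ fixes $\beta$ and $e^{r\pi i (w\cdot\beta)(X)}=e^{2\pi i r'(w\cdot\beta)(X)}$ is constant on cosets. Second, the multiset of weights of $\mathcal{W}$ is $W_{K_2}$-invariant, which lets us replace $\lambda$ by $w'\lambda$ freely. Summing over $\lambda$ with multiplicities $m_\lambda$ then gives $\sum_\lambda m_\lambda\chi_{\rho_G+r'\beta-\lambda}(t)$ by definition.

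\textbf{Step 3: the sign and $\rho$-shift bookkeeping (main obstacle).} The delicate point is justifying the unfolding of Step 2. For fixed $w$, one expands $\prod_{\alpha\in R^+_{0,\beta}}(1-e^{-2\pi i(w\cdot\alpha)})$ and uses $W_{K_2}$-invariance of $\chi_{\rho_\mathcal{W}}$ together with the $K_2$-Weyl denominator identity $\sum_{w'\in W_{K_2}}(-1)^{w'}e^{w'\rho_{K_2}}=e^{\rho_{K_2}}\prod_{\alpha\in R^+_{0,\beta}}(1-e^{-\alpha})$. Concretely, I would write the summand as a $W_{K_2}$-sum divided by $|W_{K_2}|$ and check that $\rho_G-\rho_{K_2}$ is $W_{K_2}$-invariant, since it is half the sum of $R^+\setminus R^+_{0,\beta}$ and that set is permuted by $W_{K_2}$. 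With that invariance the $K_2$-alternation combines with the weights into a full $W_G$-alternation. I expect consistency of the signs of the exponents (the $-\lambda$ and the inverse $t^{-1}$) to need careful checking. If there is a mismatch, I would first replace $X$ by $-X$ and use $\operatorname{Alt}_G(\rho_G)(t^{-1})=\pm\operatorname{Alt}_G(\rho_G)(t)$ with sign $(-1)^{|R^+|}$.
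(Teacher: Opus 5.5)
Your proposal is correct and follows essentially the same route as the paper. Both arguments rewrite the denominator using the full Weyl denominator, turn $\prod_{\alpha\in R^+_{0,\beta}}$ into a $W_{K_2}$-alternating sum via the Weyl denominator identity for $K_2$ (this, not the Weyl character formula for $\mathcal{W}$, is the tool your Step 2 actually needs), and use the $W_{K_2}$-invariance of $\beta$, of $\rho_G-\rho_{K_2}$ and of $\chi_{\rho_\mathcal{W}}$ to fuse $W_G/W_{K_2}\times W_{K_2}$ into $W_G$. The sign bookkeeping you worried about works out directly, with no need to replace $X$ by $-X$: the $t^{-1}$ contributes $e^{-2\pi i(w\cdot\lambda)(X)}$, while $1-e^{-2\pi i\theta}=e^{-\pi i\theta}\,2i\sin(\pi\theta)$ contributes $e^{2\pi i(w\cdot(\rho_G-\rho_{K_2}))(X)}$, which combines with $e^{2\pi i(ww'\cdot\rho_{K_2})(X)}$ to give $e^{2\pi i(ww'\cdot\rho_G)(X)}$.
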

\begin{proof}
    Since all positive roots $R^+_{0,\beta}$ of $K_2$ are orthogonal to $\beta$, and the Weyl group $W_{K_2}$ is generated by reflections in these roots, it follows that $\beta$ is fixed by every $w'\in W_{K_2}$.\\
    In particular, as $\mathcal{W}$ is a representation of $K_2$, we have $\chi_{\rho_\mathcal{W}}(w'\cdot (w^{-1}\cdot t^{-1}))=\chi_{\rho_\mathcal{W}}(w^{-1}\cdot t^{-1})=\chi_{\rho_\mathcal{W}}({w'}^{-1}\cdot (w^{-1}\cdot t^{-1}))$ because $w'=gT\in W_{K_2}$ acts by conjugation on $T$. Due to the decomposition into weight spaces, we obtain
    \begin{align*}
        \chi_{\rho_\mathcal{W}}({w'}^{-1}\cdot (w^{-1}\cdot t^{-1}))=\sum\limits_{\lambda\in \Delta(\mathcal{W})} m_\lambda e^{2\pi i \lambda ({w'}^{-1}\cdot w^{-1}\cdot (-X))} = \sum\limits_{\lambda\in \Delta(\mathcal{W})} m_\lambda e^{-2\pi i (w\cdot w'\cdot \lambda) (X)}.
    \end{align*}
    With a calculation similar to \cite[Equation~(76)]{kkhermsymm} and due to 
    \begin{align*}
        \sum\limits_{w\in W_G} (-1)^w e^{2\pi i (w\cdot \rho_G) (X)} = \prod\limits_{\alpha\in R^+}2i\sin (\pi\alpha(X)),
    \end{align*}
    we have
    \begin{align*}
        &\sum\limits_{[w]\in W_G/W_{K_2}}  \frac{\chi_{\rho_\mathcal{W}} (w^{-1}\cdot t^{-1}) e^{r\pi i (w\cdot \beta)(X)}}{\prod\limits_{\alpha \in R^+\setminus R^+_{0,\beta}} (1-e^{-2\pi i (w\cdot \alpha)(X)})}\\
        =& \sum\limits_{[w]\in W_G/W_{K_2}} \frac{e^{2\pi i (\rho_G - \rho_{K_2})(X)}\prod\limits_{\alpha\in R^+_{0,\beta}} 2i\sin (\pi(w\cdot \alpha)(X))}{\prod\limits_{\alpha\in R^+}2i\sin (\pi(w\cdot \alpha)(X))}\chi_{\rho_\mathcal{W}} (w^{-1}\cdot t^{-1}) e^{2\pi i r' (w\cdot \beta)(X)}\\
        =& \begin{aligned}[t]
            \sum\limits_{[w]\in W_G/W_{K_2}}& \frac{e^{2\pi i (\rho_G - \rho_{K_2})(X)} \sum\limits_{w'\in W_{K_2}} (-1)^{w'} e^{2\pi i (w\cdot w'\cdot \rho_{K_2})(X)} }{(-1)^{w}\prod\limits_{\alpha\in R^+}2i\sin (\pi\alpha(X))}\\
            &\cdot\chi_{\rho_\mathcal{W}} ({w'}^{-1}\cdot (w^{-1}\cdot t^{-1})) e^{2\pi i r' (w\cdot w'\cdot \beta)(X)}
        \end{aligned}\\
        =& \sum\limits_{\lambda\in \Delta(\mathcal{W})} m_\lambda \sum\limits_{[w]\in W_G/W_{K_2}} \frac{\sum\limits_{w'\in W_{K_2}} (-1)^w(-1)^{w'} e^{2\pi i (w\cdot w'\cdot (\rho_G + r'\beta -\lambda))(X)}}{\prod\limits_{\alpha\in R^+}2i\sin (\pi\alpha(X))}\\
        =& \sum\limits_{\lambda\in \Delta(\mathcal{W})} m_\lambda \sum\limits_{w\in W_G} \frac{(-1)^w e^{2\pi i (w\cdot (\rho_G + r'\beta -\lambda))(X)}}{\prod\limits_{\alpha\in R^+}2i\sin (\pi\alpha(X))} \\
        =& \sum\limits_{\lambda\in \Delta(\mathcal{W})} m_\lambda \frac{\sum\limits_{w\in W_G}(-1)^w e^{2\pi i (w\cdot (\rho_G + r'\beta -\lambda))(X)}}{\sum\limits_{w\in W_G} (-1)^w e^{2\pi i (w\cdot \rho_G) (X)}}
        = \sum\limits_{\lambda\in\Delta(\mathcal{W})} m_\lambda \chi_{\rho_G+r'\beta-\lambda}(t).
    \end{align*}
\end{proof}
\begin{example}
    We will now take a closer look at the Wolf space $M := \SU (n+2)/\operatorname{S}(\operatorname{U}(n)\times \operatorname{U}(2))$, whose twistor space is $Z := \SU (n+2)/\operatorname{S}(\operatorname{U}(n)\times \operatorname{U}(1)\times \operatorname{U} (1))$. Therefore, $G= \SU (n+2)$, $K_1=\operatorname{S}(\operatorname{U}(n)\times \operatorname{U}(2))$ and $K_2=\operatorname{S}(\operatorname{U}(n)\times \operatorname{U}(1)\times \operatorname{U} (1))$.
The centre $\SU (n+2)$ is non-trivial, but as the centre is a finite subgroup resulting from  unit roots and is also a subgroup of $\operatorname{S}(\operatorname{U}(n)\times \operatorname{U}(2))$, the quotient groups still fall under the classification by \cite{wolf}.\\
With $\operatorname{diag}(a_1,\ldots,a_{n+2})\in \C^{(n+2)\times (n+2)} $, we denote the diagonal matrix with the entries $a_1,\ldots,a_{n+2}$.\\
In \cite[Section~V.6]{broetomdieck} and \cite[Chapter~6]{sepanski}, the following details about $\SU (n+2)$ can be found. We consider the maximal torus $T:= \lbrace \operatorname{diag} (e^{i t_1},\ldots,e^{i t_{n+2}})\mid t_1+\ldots+ t_{n+2} = 0 \mod 2\pi \rbrace$. Hence, $W_{\SU (n+2)} = S_{n+2}$ is the symmetric group of permutations and $W_{\operatorname{S}(\operatorname{U}(n)\times \operatorname{U}(2))} = S_n\times S_2$, where $S_n$ permutes the elements of $\lbrace 1,\ldots,n\rbrace$ and $S_2$ permutes the elements of $\lbrace n+1, n+2\rbrace$. There is a bijection between $S_{n+2}/(S_n\times S_2)$ and $\lbrace \lbrace k,l\rbrace\mid 1\leq k<l\leq n+2\rbrace$ given by $\sigma(S_n\times S_2)\mapsto \lbrace \sigma(n+1),\sigma(n+2)\rbrace$.\\
The roots are $R=\lbrace e_k-e_l\mid 1\leq k,l\leq n+2 ,k\neq l\rbrace$. With the positive roots being $R^+=\lbrace e_k-e_l\mid 1\leq k<l\leq n+2\rbrace$, the maximum root is $\beta:= e_{n+1}-e_{n+2}$. We write $\alpha_{k,l}:=e_k-e_l$ and the Weyl group acts by permuting the indices. Therefore, $R^+_{0,\beta}=\lbrace \alpha_{k,n+1}\mid 1\leq k<n+1\rbrace\cup \lbrace \alpha_{k,n+2}\mid 1\leq k<n+2\rbrace$.\\
Let $X:=\operatorname{diag}(it_1,\ldots,it_{n+2})\in\frt$, such that $t:=\exp_{\SU (n+2)}(X)$ generates the maximal torus $T$. According to \cite{Kronecker}, this is exactly the case if $1,t_1,\ldots,t_{n+2}$ are linearly independent over $\mathbb{Q}$.\\
For a representation $\rho_\mathcal{W}\colon \operatorname{S}(\operatorname{U}(n)\times \operatorname{U}(2))\rightarrow \Aut_\C(\mathcal{W})$ with ${\rho_\mathcal{W}}_{\vert \SU (2)}\equiv \Id_{\mathcal{W}}$, we obtain
    \begin{align*}
        &\Tr_\mathrm{s} \left(\tau^{\mathcal{L}^r\tenc (G\times_{K_2}\mathcal{W})}_{\vert H^{0,\bullet}_{\dolb}(G/K_2,\mathcal{L}^r\tenc (G\times_{K_2}\mathcal{W}))}\right) = \Tr_\mathrm{s} \left(\tau^{V^{\bullet,r}\tenc G\times_{K_1}\mathcal{W}}_{\vert H^\bullet_{\delta_{G\times_{K_1}\mathcal{W},r}}(G/K_1)}\right)\\
        =& \begin{aligned}[t]
            \sum\limits_{1\leq k<l\leq n+2} \chi_{\rho_\mathcal{W}} (\sigma_{k,l}^{-1}\cdot t^{-1})\Bigg(& \frac{e^{r\pi i (t_k-t_l)}}{\left(1-e^{-2\pi i (t_k-t_l)}\right)\prod\limits_{\substack{1\leq k'< l'\leq n+2\\ k',l'\not\in \lbrace k,l\rbrace}} \left(1-e^{-2\pi i (t_{k'}-t_{l'})}\right)} \\
            & + \frac{e^{-r\pi i (t_k-t_l)}}{\left(1-e^{2\pi i (t_k-t_l)}\right)\prod\limits_{\substack{1\leq k'< l'\leq n+2\\ k',l'\not\in \lbrace k,l\rbrace}} \left(1-e^{2\pi i (t_{k'}-t_{l'})}\right)}\Bigg),
        \end{aligned}
    \end{align*}
where $\sigma_{k,l}\in S_{n+2}$ denotes a permutation with $\sigma_{k,l}(n+1)=k$ and $\sigma_{k,l}(n+2)=l$.
\end{example}
\printbibliography[title={References}]
\end{document}